\numberwithin{equation}{section}
\theoremstyle{plain}
\newtheorem{thm}[equation]{Theorem}
\newtheorem{lem}[equation]{Lemma}
\newtheorem{prop}[equation]{Proposition}
\newtheorem{definition}[equation]{Definition}
\newtheorem{exa}[equation]{Example}
\newtheorem{rem}[equation]{Remark}
\theoremstyle{definition}
\theoremstyle{remark}
\providecommand{\R}[1]{\mathrm{#1}}
\DeclareMathOperator{\Br}{Br}
\DeclareMathOperator{\Gal}{Gal}
\DeclareMathOperator{\Hom}{Hom}
\DeclareMathOperator{\Pic}{Pic}
\DeclareMathOperator{\Spec}{Spec}
\def\br{   {\rm Br\,}       }
\def\Br{{\rm Br\,}}
\def\Q{{\mathbb Q}}
\def\Z{{\mathbb Z}}
\def\R{{\mathbb R}}
\def\N{{\mathbb N}}
\def\A{{\mathbb A}}
\def\Gal{{\rm Gal}}
\def\ov{\overline}
\def\br{{\rm Br}}
\def\Pic{{\rm Pic}}
\def\oi{\hskip1mm {\buildrel \simeq \over \rightarrow} \hskip1mm}
\newcommand{\et}{{\textup{\'et}}} 
\DeclareFontFamily{U}{wncy}{}
\DeclareFontShape{U}{wncy}{m}{n}{%
<5>wncyr5%
<6>wncyr6%
<7>wncyr7%
<8>wncyr8%
<9>wncyr9%
<10>wncyr10%
<11>wncyr10%
<12>wncyr6%
<14>wncyr7%
<17>wncyr8%
<20>wncyr10%
<25>wncyr10}{}
\DeclareMathAlphabet{\cyr}{U}{wncy}{m}{n}
\begin{document}

\title[Strong approximation for the total space of certain quadric fibrations]
{Strong approximation for the total space of certain quadric fibrations}

\author{J.-L. Colliot-Th\'el\`ene}
\author{Fei XU}

\address{CNRS, UMR 8628, Math\'ematiques, B\^at. 425, Univ. Paris-Sud, F-91405 Orsay, France}

\email{jlct@math.u-psud.fr}

\address{School of Mathematical Sciences, Capital Normal University, Beijing 100048, China}

\email{xufei@math.ac.cn}

\date{December 12th,  2011}

\maketitle

\section{Introduction}

Let $X$ be a variety over a number field $F$.
For simplicity, let us assume in this introduction that the set $X(F)$ of rational points is
not empty.
Let $S$ be a finite set of places of $F$.
One says that {\it strong approximation holds for $X$ off $S$}
if the diagonal image of the set $X(F)$ of rational points  is dense in
  the space of $S$-ad\`eles $X(\A_F^S)$ (these are the ad\`eles where
  the places in $S$ have been omitted) equipped with the adelic topology.
    If this property holds for  $X$, it in particular implies
a local-global principle for the existence of integral points on
integral models of $X$ over the ring of $S$-integers of $F$.

For $X$ projective, $X(\A_F^S) = \prod_{v \notin S} X(F_v)$,
and the adelic topology  
coincides with the product topology.
A projective variety satisfies strong approximation off $S$
if and only if  weak approximation
 for the rational points holds off $S$.

For   open varieties, strong approximation has been mainly studied
for linear algebraic groups and their homogeneous spaces.
A classical case is $m$-dimensional affine space ${\bf A}^m_{F}$ off any nonempty set $S$, a special case being the Chinese
Remainder Theorem. For a semisimple,  almost simple, simply connected  linear algebraic group $G$
such that $\prod_{v \in S} G(F_v)$ is not compact, strong approximation
off $S$ was established by   Eichler, Kneser, Shimura,  Platonov, Prasad.

Strong approximation 
does   not hold for groups which are
not simply connected, but one may define a Brauer-Manin set. In our paper  \cite{CTX}, we started the investigation of the Brauer-Manin obstruction to strong approximation  for  homogeneous spaces of linear algebraic groups.  For such varieties, 
this was quickly  followed by  works of  Harari \cite{H}, Demarche \cite{Dem},
 Borovoi and Demarche \cite{BD} and Wei and Xu \cite{WX1}.

Few strong approximation results are known 
 for open varieties which are not homogeneous spaces. 
 Computations of the Brauer-Manin obstruction for some such varieties  
 have been recently performed (Kresch and Tschinkel  \cite{KT}, Colliot-Th\'el\`ene et Wittenberg \cite{CTW}).
Much earlier,  Watson \cite{Wat} investigated integral points on affine varieties which are
the total space of families of quadrics  over affine space ${\bf A}^m_{F}$.
When restricted to equations as above, in particular $m=1$, and with coefficients
in the ring $\Z$ of integers, under a non-compacity assumption, his Theorems 1 and 2
establish the local-global principle for  integral points when $n \geq  4$ (\cite[Thm. 1, Thm.2]{Wat}).
 Under some additional condition, he also establishes a local-global principle when $n=3$ (\cite[Thm. 3]{Wat}, see Remark \ref{watson} in the present paper).

Just as for problems of weak approximation, it is natural to ask whether strong
approximation holds for the total space of a family $f: X \to Y$ when 
it is known for the basis $Y$, for many fibres of $f$, and some algebraico-geometric assumption
is  made on the map $f$.

In the present paper, we   investigate strong approximation for varieties $X/F$ defined by an equation
$$q(x_1,\dots,x_n)=p(t),$$
where  $q$ is a quadratic form of rank $n$  in $n \geq 3$ variables and $p(t) $ is a nonzero polynomial.

The paper is organized as follows. In \S \ref{basic} we recall some definitions
related to strong approximation and the Brauer-Manin obstruction.

  In \S \ref{general},
 we give a simple  general method for proving   strong approximation for the total space of a fibration.
We apply it to varieties defined  by an equation $q(x_1,\dots,x_n)=p(t),$
for $n \geq 4$.  

In   \S  \ref{basicquadric} we detail results of \cite{CTX} on 
  the arithmetic of affine quadrics $q(x,y,z)=a$. 
  
In the purely algebraic  \S \ref{computBrauer}, we compute the Brauer group of   the smooth locus, and of  a suitable desingularisation, of a
variety  defined by an equation
$q(x,y,z)=p(t).$

The most significant results are given in \S \ref{2dim}.
The results of \S \ref{basicquadric}  and  \S \ref{computBrauer} are combined
to study the strong approximation property off $S$  for certain smooth models of varieties defined by
  an equation
$q(x,y,z)=p(t),$
under the assumption that the form $q$ is isotropic at some place in~$S$.
For these smooth models, when there is no Brauer-Manin obstruction,  we establish   strong approximation off $S$.
We give the precise conditions under which strong approximation fails.

In \S \ref{twoexamples} we give two numerical counterexamples to the local-global principles 
for existence of integral points: this represents a drastic failure of strong approximation in the
cases where this is allowed  by the results of \S \ref{2dim}.

  Concrete varieties often are singular. In that case the appropriate properties are  ``central strong approximation" and its Brauer-Manin variant. This is shortly discussed in \S \ref{singular}.

\section{Basic definitions and properties}\label{basic}

Let $F$ be a number field, $\frak o_F$ be the ring of integers of
$F$ and $\Omega_F$ be the set of all primes in $F$. For each $v \in \Omega_F$, let $F_v$ be the completion of $F$ at $v$.
Let  $\infty_F$ be  the set of  archimedean primes in $F$ and write
$v<\infty_F$ for $v\in \Omega_F\setminus \infty_{F}$.
For each $v<\infty_F$, let $\frak o_{v}$ 
be the completion 
of $\frak o_F$ at $v$ and let $\pi_v$ be a uniformizer of
$\frak o_{v}$. Write $\frak o_{v}=F_v$ for
$v\in \infty_F$.

For any finite subset $S$ of $\Omega_F$, let $F_{S}=\prod_{{v} \in S}F_{v}$.
For any finite subset $S$ of $\Omega_F$ containing $\infty_F$,
 the $S$-integers are defined to be elements in $F$ which are integral outside $S$. The ring of $S$-integers is  
 denoted by $\frak o_S$. Let $\Bbb A_F \subset \prod_{v \in \Omega_{F}}F_v $ be the adelic group of $F$
with its usual topology.
For any finite subset $S$ of $\Omega_F$, one   defines 
$\Bbb A_{F}^S \subset  (\prod_{v\not \in S} F_v)$
equipped with  the analogous   adelic
  topology. The  natural projection which omits the  $S$-coordinates  defines a  homomorphism of rings
$  \Bbb A_F \to \Bbb A_{F}^S$. For any variety  $X$ over $F$ this induces a map
$$ pr^S :  X(\Bbb A_{F})   \to   X(\Bbb A_{F}^S)$$
which is surjective if  
$\prod_{v \in S} X(F_v) \neq \emptyset$.

\begin{definition}\label{defstrong}
Let $X$ be a geometrically integral $F$-variety. 
One says that strong approximation holds for   $X$ off $S$   if
the image of the diagonal map
$$ X(F)  \to X(\Bbb A_{F}^S)$$
is dense in  $pr^S(X(\Bbb A_{F}) ) \subset  X(\Bbb A_{F}^S)$.
\end{definition}

The statement may be rephrased as:

{\it Given any nonempty open set $W \subset X(\Bbb A_{F}^S)$, 
if $X(\A_{F})\neq \emptyset $,
then there is a point of the diagonal image of $X(F)$
which lies in $W \times \prod_{v \in S} X(F_{v}) \subset X(\A_{F})$.}

\medskip

If $X$ satisfies strong approximation off $S$, and  $X(\Bbb A_{F}^S)\neq \emptyset$,
then $X(F) \neq \emptyset$ and for any finite set $T$ of places  of $F$ away from $S$,
the diagonal image of $X(F)$ is dense in $\prod_{v\in T} X(F_v)$. In other words,
$X$ satisfies the Hasse principle, and  $X$ satisfies weak approximation off $S$.

\begin{prop}\label{strongSS'}
Assume $X(\Bbb A_{F})\neq \emptyset$.
If $X$ satisfies strong approximation off  a finite  set $S$ of places,
then it satisfies   strong approximation off  any  finite set $S'$ with $S \subset S'$. \qed
\end{prop}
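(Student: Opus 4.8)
The plan is to reduce strong approximation off $S'$ to strong approximation off $S$ by a direct topological argument using the continuity of the projection maps. Write $T = S' \setminus S$, so that $\Bbb A_F^S$ projects onto $\Bbb A_F^{S'}$ by forgetting the coordinates at places of $T$; call this projection $\rho : \Bbb A_F^S \to \Bbb A_F^{S'}$. It is continuous, open, and surjective, and on rational points it is compatible with the two diagonal maps, i.e.\ the composite $X(F) \to X(\Bbb A_F^S) \xrightarrow{\rho} X(\Bbb A_F^{S'})$ is the diagonal map $X(F) \to X(\Bbb A_F^{S'})$. Moreover $\rho \circ pr^S = pr^{S'}$ as maps $X(\Bbb A_F) \to X(\Bbb A_F^{S'})$.

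First I would fix a nonempty open set $W' \subseteq pr^{S'}(X(\Bbb A_F)) \subseteq X(\Bbb A_F^{S'})$ and seek a rational point whose diagonal image lies in $W'$. By definition of the adelic topology on $X(\Bbb A_F^{S'})$, we may assume $W'$ is of the form $\prod_{v \in T} U_v \times W''$, where each $U_v \subseteq X(F_v)$ is a nonempty open set and $W''$ involves only places outside $S'$; since $W' \subseteq pr^{S'}(X(\Bbb A_F))$ and $pr^{S'}$ is surjective onto its image, there is an adelic point whose image lies in $W'$, and in particular each $X(F_v)$ for $v \in T$ and the relevant local factors are nonempty. Then $W := \rho^{-1}(W') \subseteq X(\Bbb A_F^S)$ is open and nonempty, and it meets $pr^S(X(\Bbb A_F))$: indeed, lifting the chosen adelic point and applying $pr^S$ gives a point of $pr^S(X(\Bbb A_F))$ mapping under $\rho$ into $W'$. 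So $W \cap pr^S(X(\Bbb A_F))$ is a nonempty open subset of $pr^S(X(\Bbb A_F))$.

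Now I invoke the hypothesis: strong approximation for $X$ off $S$ (together with $X(\Bbb A_F) \neq \emptyset$) gives a rational point $P \in X(F)$ whose diagonal image in $X(\Bbb A_F^S)$ lies in $W \cap pr^S(X(\Bbb A_F))$. Applying $\rho$ and using compatibility of the diagonal maps, the diagonal image of $P$ in $X(\Bbb A_F^{S'})$ lies in $\rho(W) \subseteq W'$, which is what we wanted. Since $W'$ was an arbitrary nonempty open subset of $pr^{S'}(X(\Bbb A_F))$, the diagonal image of $X(F)$ is dense in $pr^{S'}(X(\Bbb A_F))$, i.e.\ $X$ satisfies strong approximation off $S'$.

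The only mildly delicate point, and the one I would be most careful about, is bookkeeping with the images of the projection maps: one must check that the target of the density statement is $pr^{S'}(X(\Bbb A_F))$ and not all of $X(\Bbb A_F^{S'})$, and that $\rho$ carries $pr^S(X(\Bbb A_F))$ onto $pr^{S'}(X(\Bbb A_F))$ (immediate from $\rho \circ pr^S = pr^{S'}$) so that the open set $W$ produced really does meet $pr^S(X(\Bbb A_F))$. Everything else is the formal fact that a continuous surjection carrying a dense subset to the diagonal transports denseness, restricted appropriately to the relevant subspaces. No deep input is needed; the content is entirely in the definitions, which is consistent with the statement being recorded with a one-line \qed in the excerpt.
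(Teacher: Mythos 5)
Your proof is correct and is exactly the definitional argument the paper has in mind (the paper records this proposition with a bare \qed and no proof): the continuity of $\rho$, the identity $\rho\circ pr^S=pr^{S'}$, and the compatibility of the diagonal maps transport density off $S$ to density off $S'$. One minor slip: $X(\A_F^{S'})$ has no factors at the places of $T=S'\setminus S$, so your description of $W'$ as $\prod_{v\in T}U_v\times W''$ is not meaningful --- but this is harmless, since you never actually use that decomposition and the nonemptiness of all local factors already follows from $X(\A_F)\neq\emptyset$.
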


\begin{prop}\label{strongUX}
Let $U \subset X$  be a  dense open set of a smooth geometrically integral $F$-variety $X$.
If strong approximation off $S$  holds for $U$, then strong approximation off $S$
holds for $X$.
\end{prop}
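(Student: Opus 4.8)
The plan is to bootstrap the assertion for $X$ from the one assumed for $U$, the bridge being the elementary fact that, since $X$ is smooth, the $v$-adic points of $U$ are dense in those of $X$ at every place $v$. Note first that $U$, being a dense open subset of the geometrically integral variety $X$, is itself geometrically integral, so Definition \ref{defstrong} applies to it; moreover the case $X(\Bbb A_F)=\emptyset$ is trivial, so I may assume $X(\Bbb A_F)\neq\emptyset$. I would fix once and for all a finite set $S_1\supseteq\infty_F$ of places, a model $\C X$ of $X$ over $\frak o_{S_1}$, and the induced model $\C U:=\C X\setminus\overline{X\setminus U}$ of $U$ (Zariski closure taken in $\C X$), so that $\C U(\frak o_v)\subseteq\C X(\frak o_v)$ for every $v\notin S_1$.

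The key local input, recorded at the outset, is that $U(F_v)$ is dense in $X(F_v)$ for every place $v$: as $X$ is smooth, $X(F_v)$ is a $v$-adic analytic manifold of dimension $\dim X$, and $X\setminus U$, being closed of strictly smaller dimension, has its $F_v$-points nowhere dense in $X(F_v)$ by the implicit function theorem. Two consequences will be used repeatedly: $U(F_v)\neq\emptyset$ as soon as $X(F_v)\neq\emptyset$, and $\C X(\frak o_v)\cap U(F_v)$ is a nonempty open subset of $X(F_v)$ as soon as $\C X(\frak o_v)\neq\emptyset$. I would also use the standard fact that $\C U(\frak o_v)\neq\emptyset$ for all but finitely many $v$ (Lang--Weil and Hensel's lemma applied to a suitable smooth model of $U$).

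Now I would start from an arbitrary $\xi\in pr^S(X(\Bbb A_F))$ together with an arbitrary basic open neighbourhood $N=\prod_{v\in T}\Omega_v\times\prod_{v\notin S\cup T}\C X(\frak o_v)$ of $\xi$ in $X(\Bbb A_F^S)$, where $T\supseteq S_1\setminus S$ is finite and disjoint from $S$ and each $\Omega_v\subseteq X(F_v)$ is open; since $U(F)\subseteq X(F)$ it suffices to produce $x\in U(F)$ with $x\in N$. Lift $\xi$ to an adelic point $(P_v)_{v\in\Omega_F}\in X(\Bbb A_F)$; then $P_v\in\Omega_v$ for $v\in T$ and $P_v\in\C X(\frak o_v)$ for all $v\notin S\cup T$. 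Enlarge $T$ to a finite set $T'\supseteq T$, still disjoint from $S$, containing every $v\notin S$ at which $\C U(\frak o_v)=\emptyset$, and build an adelic point $(Q_v)_{v\in\Omega_F}\in U(\Bbb A_F)$ by choosing $Q_v\in\Omega_v\cap U(F_v)$ for $v\in T$, $Q_v\in\C X(\frak o_v)\cap U(F_v)$ for $v\in T'\setminus T$, $Q_v\in\C U(\frak o_v)$ for $v\notin S\cup T'$, and $Q_v\in U(F_v)$ arbitrary for $v\in S$; each of these sets is nonempty by the local input and the choice of $T'$. Finally apply strong approximation off $S$ for $U$ to $(Q_v)_{v\notin S}$, approximating it at the places of $T$ closely enough to keep the output inside the $\Omega_v$, imposing membership in $\C X(\frak o_v)\cap U(F_v)$ at the places of $T'\setminus T$, and imposing $\C U$-integrality outside $S\cup T'$; the resulting $x\in U(F)$ then lies in $\Omega_v$ for all $v\in T$ and in $\C X(\frak o_v)$ for all $v\notin S\cup T$, that is, $x\in N$.

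The only genuine difficulty lies in the bookkeeping between the two integral models: the adelic point $(P_v)$ of $X$ may reduce into $X\setminus U$ modulo infinitely many primes, so there one cannot choose the $v$-adic point of $U$ close to $P_v$; but this is harmless, because at a prime $v\notin S\cup T$ the neighbourhood $N$ only requires a point of $\C X(\frak o_v)$, and every point of $\C U(\frak o_v)$ is such a point. The finitely many primes at which $\C U$ has no $\frak o_v$-point are absorbed into the finite set $T'$ over which strong approximation for $U$ is invoked, where the nonempty open condition $\C X(\frak o_v)\cap U(F_v)$ is imposed by hand.
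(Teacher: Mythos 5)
Your proof is correct and is essentially the paper's argument made explicit: the paper reduces the statement to the density of the image of $U(\A_F)$ in $X(\A_F)$, deduced from exactly your two local inputs ($U(F_v)$ dense in $X(F_v)$ by smoothness of $X$, and $\C U(\frak o_v)\neq\emptyset$ for almost all $v$ by Lang--Weil since $U$ is geometrically integral). Your additional bookkeeping with the auxiliary set $T'$ correctly handles the finitely many places where the two integral models diverge, which the paper leaves implicit.
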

\begin{proof}
This follows from the following statement: for $X/F$ as in the lemma, the image of $U(\A_F)$
in $X(\A_F)$ is dense. That statement itself follows from two facts. Firstly, for a given place $v$,
$U(F_v)$ is dense in $X(F_v)$ (smoothness of $X$). Secondly, $U$ admits a model $\bf U$ over
a suitable $\frak o_T$ such that ${\bf U}(\frak o_v) \neq \emptyset$ for all $v \notin T$ (because $U/F$ is geometrically integral).
\end{proof}

As explained in \cite{CTX},
one can refine definition  \ref{defstrong} by using the Brauer--Manin set. Let $X$ be an $F$-variety.
Let $\br(X)=H^2_{\et}(X, \Bbb G_m)$ and define
$$X(\Bbb A_F)^{\br(X)}=\{\{x_v\}_{v\in \Omega_F}
\in X(\Bbb A_F): \ \forall \xi\in \br(X),  \ \sum_{v\in \Omega_F}inv_v (\xi(x_v)) =0.
      \} $$
This is a closed subset of $X(\Bbb A_F)$. Class field theory implies  
$$X(F) \subset  X(\Bbb A_F)^{\br(X)} \subset X(\Bbb A_F). $$ Let
$$ X(\Bbb A_F^S)^{\br(X)}:=  pr^S (X(\Bbb A_F)^{\br(X)}) \subset X(\Bbb A_{F}^S). $$

\begin{definition}\label{defstrongmanin}
Let $X$ be a geometrically integral variety over  the number field $F$. 
If  the diagonal  image of $X(F)$ in  $(X(\Bbb A_{F}^S))^{\br(X)} \subset X(\Bbb A_{F}^S)$ is 
 dense,
we say that   strong approximation with Brauer-Manin obstruction holds for $X $ off  $S$.
\end{definition}

As above, the statement may be rephrased as:

{\it Given any   open set $W \subset X(\Bbb A_{F}^S)$,
if  $[W \times \prod_{v \in S} X(F_{v})]^{\br(X)} \neq \emptyset $,  
 then there is a point of the diagonal image of $X(F)$ 
  in the product $W \times \prod_{v \in S} X(F_{v}) \subset X(\A_{F})$.}

\begin{prop}\label{strongBMSS'}
Assume $X(\Bbb A_{F})\neq \emptyset$.
If strong approximation with Brauer-Manin obstruction holds for $X $ 
off  a finite  set $S$ of places, then it holds off 
  any   finite set $S'$ with $S \subset S'$. \qed
\end{prop}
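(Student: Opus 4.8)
The plan is to imitate the proof of Proposition \ref{strongSS'}, transporting the test data from level $S'$ down to level $S$ and invoking the hypothesis there. Write $pr^{S'}_S \colon X(\A_F^S) \to X(\A_F^{S'})$ for the natural map obtained by forgetting the coordinates at the places of $S' \setminus S$; it is continuous for the adelic topologies and satisfies $pr^{S'} = pr^{S'}_S \circ pr^S$ as maps on $X(\A_F)$.

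To prove density off $S'$, I would start from an open set $W' \subset X(\A_F^{S'})$ with $[W' \times \prod_{v \in S'} X(F_v)]^{\br(X)} \neq \emptyset$, and seek a point of the diagonal image of $X(F)$ inside $W' \times \prod_{v \in S'} X(F_v)$. First pick an adelic point $(x_v)_{v} \in X(\A_F)^{\br(X)}$ with $pr^{S'}((x_v)) \in W'$ (such a point exists, since by definition $[W' \times \prod_{v \in S'} X(F_v)]^{\br(X)}$ is exactly the set of $(y_v) \in X(\A_F)^{\br(X)}$ with $pr^{S'}((y_v)) \in W'$). Then set $W := (pr^{S'}_S)^{-1}(W') \subset X(\A_F^S)$. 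By continuity of $pr^{S'}_S$ this is open, and it is nonempty because it contains $pr^S((x_v))$, as $pr^{S'}_S(pr^S((x_v))) = pr^{S'}((x_v)) \in W'$. Moreover $(x_v)$ itself lies in $W \times \prod_{v \in S} X(F_v) \subset X(\A_F)$ and in $X(\A_F)^{\br(X)}$, so $[W \times \prod_{v \in S} X(F_v)]^{\br(X)} \neq \emptyset$.

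Now I would invoke the hypothesis, strong approximation with Brauer--Manin obstruction for $X$ off $S$, in the rephrased form, applied to this $W$: it produces a point $P \in X(F)$ whose diagonal image $\delta(P)$ lies in $W \times \prod_{v \in S} X(F_v) \subset X(\A_F)$, i.e. $pr^S(\delta(P)) \in W$. Hence $pr^{S'}(\delta(P)) = pr^{S'}_S(pr^S(\delta(P))) \in pr^{S'}_S(W) \subset W'$, which says precisely that $\delta(P)$ lies in $W' \times \prod_{v \in S'} X(F_v)$, since the $S'$-components of a diagonal adelic point automatically lie in $\prod_{v \in S'} X(F_v)$. This is the required conclusion.

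There is essentially no obstacle: the argument is a direct diagram chase, the only points that even deserve a word being that $pr^{S'}_S$ is continuous (so that $W$ is a legitimate test open set) and that $pr^{S'} = pr^{S'}_S \circ pr^S$. As in Proposition \ref{strongSS'}, the standing assumption $X(\A_F) \neq \emptyset$ is harmless — if it fails, both $X(F)$ and $X(\A_F^{S'})^{\br(X)} = pr^{S'}(X(\A_F)^{\br(X)})$ are empty and the statement is vacuous — and it is used only to keep all the projection maps in play surjective, so that the Brauer--Manin subsets at the various levels are compatible.
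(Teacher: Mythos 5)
Your argument is correct: the paper states this proposition without proof (it is regarded as immediate), and your diagram chase — pulling the test open set $W'$ back along the continuous forgetful map $pr^{S'}_S\colon X(\A_F^{S'+\emptyset}) \to X(\A_F^{S'})$, or rather $X(\A_F^{S})\to X(\A_F^{S'})$, and then applying the hypothesis off $S$ — is exactly the routine verification the authors intend, with the one point worth noting (that $[W'\times\prod_{v\in S'}X(F_v)]^{\br(X)}$ consists precisely of the Brauer--Manin ad\`eles projecting into $W'$) handled correctly.
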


\begin{prop}\label{strongBMUX}
 Let $F$ be a number field.
Let $U \subset X$  be a  dense open set of a smooth geometrically integral $F$-variety $X$.
Assume:

(i)  $X(\Bbb A_{F})\neq \emptyset$;

(ii) the quotient $\br(U)/\br(F)$ is finite.

Let $S$ be a finite set of places of $F$. If strong approximation  with Brauer-Manin obstruction   off $S$ holds for $U$, then 
it holds for
$X$.
\end{prop}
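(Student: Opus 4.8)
The plan is to upgrade Proposition \ref{strongUX} by keeping track of the Brauer--Manin pairing. The statement to prove is that, under hypotheses (i) and (ii), strong approximation with Brauer--Manin obstruction off $S$ for $U$ implies the same for $X$. First I would reduce to a statement about adelic points: I claim it suffices to show that the image of $U(\A_F)^{\br(U)}$ in $X(\A_F)$ is dense in $X(\A_F)^{\br(X)}$, and that $pr^S$ of this image is dense in $pr^S(X(\A_F)^{\br(X)})$. Granting this, an adelic point in $X(\A_F)^{\br(X)}$ lying over a given open $W \subset X(\A_F^S)$ can be approximated by an adelic point of $U$ orthogonal to $\br(U)$; then strong approximation with Brauer--Manin obstruction off $S$ for $U$ produces an $F$-rational point of $U$ (hence of $X$) in the desired neighbourhood. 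One should be a little careful that a nonempty open set of $X(\A_F^S)$ meeting $X(\A_F^S)^{\br(X)}$ pulls back, via the density just claimed, to a nonempty open set of $U(\A_F^S)$ meeting $U(\A_F^S)^{\br(U)}$.

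The heart of the matter is therefore the density of $U(\A_F)^{\br(X)}$ (note: orthogonality with respect to the subgroup $\br(X) \subset \br(U)$, not all of $\br(U)$) inside $X(\A_F)^{\br(X)}$, together with the passage from $\br(X)$-orthogonality to $\br(U)$-orthogonality. For the first, I would argue place by place. Fix $\{x_v\} \in X(\A_F)^{\br(X)}$ and a finite set of places where we want to approximate. By smoothness of $X$ and the fact that $U$ is a dense open subset, $U(F_v)$ is dense in $X(F_v)$ for every $v$; and by geometric integrality of $U$, a model $\mathbf U$ over some $\frak o_T$ has $\mathbf U(\frak o_v) \neq \emptyset$ for all $v \notin T$. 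This is exactly the argument already used in the proof of Proposition \ref{strongUX}, so the image of $U(\A_F)$ in $X(\A_F)$ is dense; intersecting with the closed condition of $\br(X)$-orthogonality and using continuity of the local invariant maps, one gets that the image of $U(\A_F)^{\br(X)}$ is dense in $X(\A_F)^{\br(X)}$.

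The genuinely new input, and the step I expect to be the main obstacle, is dealing with the gap between $\br(X)$ and $\br(U)$: an adelic point of $U$ orthogonal to $\br(X)$ need not be orthogonal to all of $\br(U)$. Here hypothesis (ii), finiteness of $\br(U)/\br(F)$, is essential. The idea is that $\br(U)/\br(X)$ is finite, so there are finitely many classes $\xi_1,\dots,\xi_r \in \br(U)$ generating $\br(U)$ modulo $\br(X)$; each $\xi_i$ is unramified outside a finite set of places (it extends to the integral model $\mathbf U$ away from $T$, enlarging $T$ if necessary), hence $\sum_v \inv_v(\xi_i(y_v))$ depends only on finitely many coordinates $y_v$ and is locally constant in $\{y_v\}$. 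One then modifies a given $\br(X)$-orthogonal adelic point of $U$ at finitely many auxiliary places outside $S$ and outside the approximation set --- using that $U(F_v)$ is nonempty and the local pairing at such a place is surjective onto the relevant cyclic group, plus the fact that $\sum_i \inv_v$ over all $v$ of a global element vanishes --- to kill the finitely many unwanted pairings, without disturbing the approximation already achieved or the $\br(X)$-orthogonality. Assembling these adjustments is the technical core; once it is done, one has produced, arbitrarily close off $S$ to the original point, an adelic point of $U$ in $U(\A_F)^{\br(U)}$, and strong approximation with Brauer--Manin obstruction off $S$ for $U$ finishes the proof.
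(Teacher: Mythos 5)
Your overall architecture coincides with the paper's: reduce to producing, inside the prescribed adelic neighbourhood, an adelic point of $U$ orthogonal to all of $\br(U)$, and then invoke strong approximation with Brauer--Manin obstruction for $U$ (off a possibly larger finite set containing $S$, which is harmless). The first half of your argument --- replacing a $\br(X)$-orthogonal adelic point of $X$ by a nearby adelic point of $U$ that is still $\br(X)$-orthogonal, using local constancy of the invariant maps, density of $U(F_v)$ in $X(F_v)$ by smoothness, and the vanishing of a generating finite subgroup of $\br(X)/\br(F)$ on ${\bf X}(\frak o_{v})$ for $v$ outside a suitable $T$ --- is sound and matches the paper.

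The gap is exactly in the step you flag as the technical core. To pass from $\br(X)$-orthogonality to $\br(U)$-orthogonality you assert that at suitable auxiliary places ``the local pairing is surjective onto the relevant cyclic group,'' so that the finitely many sums $\sum_{v}\inv_v(\xi_i(y_v))$ can be killed. As stated this is unjustified and, for an arbitrary place, false: for a fixed $\xi_i\in\br(U)$ the evaluation map vanishes identically on ${\bf U}(\frak o_{v})$ for almost all $v$ (you arranged $\xi_i\in\br({\bf U})$ yourself), and on all of $U(F_v)$ its image need not be the full cyclic group; the reciprocity law for global classes does not by itself produce the needed places. What the argument requires is that each $\xi_i\in\br(U)\setminus\br(X)$ --- which by purity on the smooth $X$ has a nontrivial residue along some divisor of $X\setminus U$ --- admits, via Chebotarev applied to that residue, auxiliary places $v$ and points $N_v\in{\bf X}(\frak o_{v})\cap U(F_v)$ where $\xi_i$ takes nonzero values, and that one can adjust \emph{simultaneously} for all elements of the finite generating group $B$ so that the total sums vanish, without one adjustment spoiling another. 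This is precisely Harari's formal lemma, in the version of \cite{CT}, Th\'eor\`eme 1.4, which is the actual content of this step of the paper's proof; without invoking or reproving it, your argument does not close.
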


\begin{proof} There exists a finite subgroup  $B \subset \br(U)$
such that $B$ generates
  $\br(U)/\br(F)$ and $B \cap \br(X)$ generates $\br(X)/\br(F)$.
There exists a finite set $T$ of places of $k$ containing $S$ and all the archimedean places,
and smooth $\frak o_{T} $-schemes $\bf{U} \subset {\bf X}$  with geometrically integral fibres over
the points of $\Spec(\frak o_{T})$
such that

(a) The restriction $\bf{U} \subset {\bf X}$ over $\Spec(F) \subset \Spec (\frak o_{T})$ is $U \subset X$.

(b) $B \subset \ \br({\bf U})$.

(c)  $B \cap \br(X) \subset \br({\bf X})$.

(d) For each $v \notin T$,  ${\bf U}(\frak o_{v}) \neq \emptyset$ (this uses the fact that $U \to 
\Spec(\frak o_{T})$ is smooth with geometrically integral fibres, the Weil estimates and the
fact that we took $T$ big enough).

To prove the proposition, it is enough to show: 

\smallskip

  {\it Given any finite set $T$ as above and  given, for each place $v \in T \setminus S$,
an open set $W_{v} \subset X(F_{v})$  such that  the set
$$[\prod_{v \in S} X(F_{v}) \times \prod_{v \in T\setminus S}W_{v} \times \prod_{v \notin T} {\bf X}(\frak o_{v})]^{\br(X)}   $$
is not empty, then  this set contains a point of the diagonal image   of $X(F)$ in $X(\A_{F})$.}
 
 \smallskip

  Each $\alpha \in B\cap \br(X)$ vanishes when evaluated on ${\bf X}(\frak o_{v})$.
 For any  $\alpha \in \br(X)$ and any place $v$,  the map $X(F_{v}) \to \br(F_{v}) \subset \Q/\Z$ 
 given by evaluation of $\alpha$ is locally constant.
 Since $X$ is smooth, for each place $v$, the set $U(F_{v})$ is dense in $X(F_{v})$ for the 
local topology. In particular, for $v \notin T$, the set ${\bf X}(\frak o_{v}) \cap U(F_{v})$
is not empty.
There thus exists a point $\{M_{v}\} \in X(\A_{F})$ which lies in the above set
such that $M_{v} \in U(F_{v})$ for $v \in T$ and $M_{v} \in {\bf X}(\frak o_{v}) \cap U(F_{v})$
for $v \notin T$.

 We now use Harari's formal lemma in the version given  in \cite{CT}.
 According to the proof of \cite[Th\'eor\`eme 1.4]{CT}, there 
  exists a finite set $T_{1}$ of places of $k$, $T_{1} \cap T = \emptyset$,
  and for $v \in T_{1}$ points $N_{v} \in {\bf X}(\frak o_{v}) \cap U(F_{v})$,
  such that
  $$ \sum_{v \in T} \beta(M_{v}) + \sum_{v \in T_{1}} \beta(N_{v})=0$$
  for each $\beta \in B$.
  
  For $v \in T$,   let $N_{v}=M_{v}$. For $v \notin T \cup T_{1}$,  let $N_{v} \in {\bf U}(\frak o_{v})$ be an arbitrary point.
The ad\`ele $\{N_{v}\}$  of $X$ belongs to
$$[\prod_{v \in S} X(F_{v}) \times \prod_{v \in T\setminus S}W_{v} \times \prod_{v \notin T} {\bf X}(\frak o_{v})]^{\br(X)}   $$
It is the image of  an ad\`ele of $U$ which lies in
$$[\prod_{v \in S} U(F_{v}) \times \prod_{v \in T\setminus S}W_{v}\cap U(F_{v}) \times \prod_{v\in T_{1}} U(F_{v}) \cap {\bf X}(\frak o_{v})
\times
 \prod_{v \notin T \cup T_{1}} {\bf U}(\frak o_{v})]^{\br(U)}.$$
 
 Using the finiteness of $B$ and the continuity of the evaluation map $U(F_{v}) \to \br(F_{v})$
 attached to each element of $B$, we find that there exist open sets $W'_{v} \subset U(F_{v})$
 for $v \in T\cup T_{1}$, with  $W'_{v} \subset W_{v}$ for $v \in T \setminus S$,
 such that the subset
$$[\prod_{v \in T\cup T_{1}} W'_{v} \times  \prod_{v \notin T \cup T_{1}} {\bf U}(\frak o_{v})]^{\br(U)}$$
 of the ad\`eles of $U$
 is nonempty. Since  strong approximation  with Brauer-Manin obstruction   off $S$ holds for $U$, 
 hence off $T \cup T_{1}$ since $S \subset T$, there exists 
a point in the diagonal image of $U(F)$ in $U(\A_{F})$ which lies
 in this set.
 
Since this set maps into 
 $$[\prod_{v \in S} X(F_{v}) \times \prod_{v \in T\setminus S}W_{v} \times \prod_{v \notin T} {\bf X}(\frak o_{v})]^{\br(X)}   $$
 via the inclusion $U \subset X$, this concludes the proof.
\end{proof}

\begin{lem}\label{imageeval}
 Let $F$ be a number field. 
Let $U \subset X$  be a  dense open set of a smooth geometrically integral $F$-variety $X$.
Assume $X(\A_{F})\neq \emptyset$. Let $\alpha_{1}, \dots, \alpha_{n} \in \br(X)$.
Let $S$ be a finite set of places of $F$.
The image of the evaluation map $U(\A_{F}^S) \to (\Q/\Z)^n$ defined by the sum of the invariants of 
each $\alpha_{i}$ on the  $U(F_{v})$ for $v \notin S$ coincides with the image
of the analogous evaluation map $X(\A_{F}^S) \to (\Q/\Z)^n$.
\end{lem}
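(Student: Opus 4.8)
The plan is to prove equality of the two images by a double inclusion, the substance lying in one direction only. One inclusion is immediate: the open immersion $U\subset X$ induces a map $U(\A_F^S)\to X(\A_F^S)$, and for $\alpha_i\in\br(X)$ the value of $\alpha_i$ at a point of $U(F_v)$ agrees with its value at the image point of $X(F_v)$; hence the image of the evaluation map attached to $U$ is contained in that attached to $X$.

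For the reverse inclusion, I would start from an ad\`ele $\{x_v\}_{v\notin S}\in X(\A_F^S)$ and produce an ad\`ele $\{u_v\}_{v\notin S}\in U(\A_F^S)$ with $\alpha_i(u_v)=\alpha_i(x_v)$ for every $i$ and every $v\notin S$; summing the local invariants $\inv_v$ then shows the two ad\`eles have the same image in $(\Q/\Z)^n$. To build $\{u_v\}$, first spread everything out, exactly as in the proof of Proposition \ref{strongBMUX}: choose a finite set $T$ of places containing $S$ and the archimedean places, together with smooth $\frak o_T$-models ${\bf U}\subset{\bf X}$ with geometrically integral fibres, such that the finitely many classes $\alpha_i$ all extend to $\br({\bf X})$ and such that $x_v\in{\bf X}(\frak o_v)$ for all $v\notin T$; enlarging $T$ if necessary, the Weil (Lang--Weil) estimates and Hensel's lemma give ${\bf U}(\frak o_v)\neq\emptyset$ for all $v\notin T$. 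For $v\notin T$, replace $x_v$ by an arbitrary point $u_v\in{\bf U}(\frak o_v)$: then $\alpha_i(u_v)=0=\alpha_i(x_v)$ for all $i$, since each $\alpha_i$ extends over ${\bf X}$. For the finitely many places $v\in T\setminus S$, use that $X$ is smooth, so $U(F_v)$ is dense in $X(F_v)$ for the $v$-adic topology, and that for each $i$ the evaluation map $X(F_v)\to\br(F_v)$ followed by $\inv_v$ is locally constant; hence one may pick $u_v\in U(F_v)$ close enough to $x_v$ that $\alpha_i(u_v)=\alpha_i(x_v)$ for all $i$. The resulting family $\{u_v\}_{v\notin S}$ is integral outside $T$, hence an $S$-ad\`ele of $U$, and has the required invariants, which proves the inclusion.

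I do not expect a genuine obstacle: the argument is a routine assembly of tools already used in the excerpt --- spreading out, Lang--Weil/Hensel existence of local integral points on smooth schemes with geometrically integral fibres, density of $U(F_v)$ in $X(F_v)$ coming from smoothness of $X$, and local constancy of the evaluation maps attached to Brauer classes. The only point requiring a little care is to choose $T$ large enough that all the $\alpha_i$ simultaneously extend over the integral model, so that the set of places at which one must argue by $v$-adic approximation is finite; the hypothesis $X(\A_F)\neq\emptyset$ just ensures there is something to check.
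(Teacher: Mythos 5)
Your proof is correct and follows essentially the same route as the paper's: the easy inclusion via $U(\A_F^S)\to X(\A_F^S)$, then spreading out to integral models over $\frak o_T$ so that the $\alpha_i$ extend and kill the places outside $T$, and $v$-adic approximation of $x_v$ by points of $U(F_v)$ at the finitely many places of $T\setminus S$ using smoothness and local constancy of the evaluation maps. No substantive difference to report.
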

\begin{proof} There is a natural map $U(\A_{F}^S) \to X(\A_{F}^S)$ which is compatible with
evaluation of elements of $\br(X)$, hence one direction is clear.
Let $\{M_{v}\} \in X(\A_{F}^S)$. There exist a finite set $T$ of places containing $S$ and
regular  integral models  ${\bf U} \subset {\bf X}$ of $U \subset X$ over $\frak o_{T}$ such that
 $\alpha_{i} \in \br({\bf X}) \subset  \br({\bf U}) $ for each  $i=1, \dots, n$, 
 such that $M_{v} \in {\bf X}(\frak o_{v})$ for each $v \notin T$,
and such that moreover ${\bf U}(\frak o_{v}) \neq \emptyset$ for $v \notin T$.
For $v \in T \setminus S$, let 
   $N_{v} \in U(F_{v}), v \in T \setminus S$ be close enough  to $M_{v} \in X(F_{v})$
that  $\alpha_{i}(N_{v})=\alpha_{i}(M_{v})$ for each $i=1,\dots, n$ (such points exist since $X$ is smooth).
For $v \notin T$, let $N_{v}$ be an arbitrary point of ${\bf U}(\frak o_{v})$.

Then
$$\sum_{v\notin S} \alpha_{i}(M_{v})= \sum_{v \in T, v \notin S} \alpha_{i}(M_{v})  = 
\sum_{v \in T, v \notin S} \alpha_{i}(N_{v}) = \sum_{v \notin S} \alpha_{i}(N_{v}).$$
\end{proof}

\begin{prop}
 Let $F$ be a number field. 
Let $U \subset X$  be a  dense open set of a smooth geometrically integral $F$-variety $X$.
Assume $X(\A_{F})\neq \emptyset$.

(i)  Assume $\br(X)/\br(F)$ finite. If $pr_{S}(X(\A_{F})^{\br(X)}) $ is strictly smaller than
$X(\A_{F}^S)$, then  $pr_{S}(U(\A_{F})^{\br(U)}) $ is strictly smaller than
$U(\A_{F}^S)$.

(ii) If $\br(X) \to \br(U)$ is an isomorphism, if $pr_{S}(U(\A_{F})^{\br(U)}) $ is strictly smaller than
$U(\A_{F}^S)$, then $pr_{S}(X(\A_{F})^{\br(X)}) $ is strictly smaller than
$X(\A_{F}^S)$.

\end{prop}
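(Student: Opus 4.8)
The plan is to reduce both implications to a single statement comparing the images of two evaluation maps into a finite group $(\Q/\Z)^n$, and then to invoke Lemma~\ref{imageeval} together with the local constancy of the evaluation of a Brauer class.

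I would first fix finitely many classes $\alpha_1,\dots,\alpha_n\in\br(X)$ generating $\br(X)/\br(F)$; this is licit by the finiteness hypothesis in case~(i), and in case~(ii) by the same finiteness, which holds for $\br(U)/\br(F)$ as well and is carried over through the isomorphism $\br(X)\to\br(U)$. In case~(ii) I would moreover arrange that the restrictions of the $\alpha_i$ generate $\br(U)/\br(F)$, so that $U(\A_F)^{\br(U)}=U(\A_F)^{\{\alpha_1,\dots,\alpha_n\}}$, whereas in general only the inclusion $U(\A_F)^{\br(U)}\subseteq U(\A_F)^{\{\alpha_1,\dots,\alpha_n\}}$ holds. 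Since a class in the image of $\br(F)\to\br(X)$ contributes to the Brauer--Manin sum the sum of its local invariants, which vanishes by the reciprocity law, one gets $X(\A_F)^{\br(X)}=X(\A_F)^{\{\alpha_1,\dots,\alpha_n\}}$. Writing $X(\A_F)=X(\A_F^S)\times\prod_{v\in S}X(F_v)$ and using additivity over the places of the Brauer--Manin pairing attached to $(\alpha_i)$, the evaluation map $X(\A_F)\to(\Q/\Z)^n$ is the sum of an evaluation map $\Phi^S$ on $X(\A_F^S)$ and one $\Psi$ on $\prod_{v\in S}X(F_v)$; hence $pr_S(X(\A_F)^{\{\alpha_i\}})=(\Phi^S)^{-1}(-\im\Psi)$, and so $pr_S(X(\A_F)^{\{\alpha_i\}})$ is strictly smaller than $X(\A_F^S)$ if and only if $\im\Phi^S\not\subseteq-\im\Psi$. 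The same reformulation applies to $U$, with evaluation maps $\Phi^S_U$ and $\Psi_U$.

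Next I would establish $\im\Phi^S_U=\im\Phi^S$ and $\im\Psi_U=\im\Psi$. The first is exactly Lemma~\ref{imageeval}, applied to $\alpha_1,\dots,\alpha_n\in\br(X)$ viewed in $\br(U)$. For the second, at each place $v\in S$ the set $U(F_v)$ is dense in $X(F_v)$ because $X$ is smooth, while the finitely many evaluation maps $X(F_v)\to\br(F_v)$ attached to the $\alpha_i$ are locally constant, so every point of $X(F_v)$ has a neighbourhood on which all of them are constant; hence $U(F_v)$ and $X(F_v)$ have the same image in $(\Q/\Z)^n$, and summing over $v\in S$ gives $\im\Psi_U=\im\Psi$. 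It follows that $\im\Phi^S_U\not\subseteq-\im\Psi_U$ if and only if $\im\Phi^S\not\subseteq-\im\Psi$; in other words, $pr_S(U(\A_F)^{\{\alpha_i\}})$ is strictly smaller than $U(\A_F^S)$ exactly when $pr_S(X(\A_F)^{\{\alpha_i\}})$ is strictly smaller than $X(\A_F^S)$.

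Finally I would assemble the two cases. In case~(i) the hypothesis gives $pr_S(X(\A_F)^{\{\alpha_i\}})=pr_S(X(\A_F)^{\br(X)})\subsetneq X(\A_F^S)$, hence $pr_S(U(\A_F)^{\{\alpha_i\}})\subsetneq U(\A_F^S)$, and then $U(\A_F)^{\br(U)}\subseteq U(\A_F)^{\{\alpha_i\}}$ forces $pr_S(U(\A_F)^{\br(U)})\subsetneq U(\A_F^S)$. In case~(ii) the choice of the $\alpha_i$ makes the hypothesis read $pr_S(U(\A_F)^{\{\alpha_i\}})\subsetneq U(\A_F^S)$, hence $pr_S(X(\A_F)^{\{\alpha_i\}})\subsetneq X(\A_F^S)$, which is $pr_S(X(\A_F)^{\br(X)})\subsetneq X(\A_F^S)$. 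The step I expect to be the main point is the passage to finitely many Brauer classes: it is what makes $\Phi^S$ and $\Psi$ well defined on the full adelic spaces and what makes each $X(\A_F)^{\{\alpha_i\}}$ clopen, the latter being essential so that the $v$-adic density argument genuinely identifies $\im\Psi_U$ with $\im\Psi$; this is supplied in case~(i) by the hypothesis on $\br(X)/\br(F)$ and in case~(ii) by transporting the same finiteness along the isomorphism $\br(X)\to\br(U)$. The remaining verifications — additivity of the pairing over places, vanishing of the $\br(F)$-contribution by reciprocity, and compatibility of Lemma~\ref{imageeval} with the chosen classes — are routine.
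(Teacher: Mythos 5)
Your argument is, in substance, the same as the paper's: reduce to a finite list of classes $\alpha_1,\dots,\alpha_n$ generating $\br(X)/\br(F)$ (the $\br(F)$-part being killed by reciprocity), compare the images of the off-$S$ evaluation maps on $U(\A_F^S)$ and $X(\A_F^S)$ via Lemma~\ref{imageeval}, and compare the images of the at-$S$ evaluation maps via density of $U(F_v)$ in $X(F_v)$ together with local constancy. Your repackaging through $\Phi^S$, $\Psi$ and the identity $pr_S(X(\A_F)^{\{\alpha_i\}})=(\Phi^S)^{-1}(-\im\Psi)$ is a clean but cosmetic reorganization of exactly the computation the paper performs in (i), and your handling of (i) is complete, including the observation that $U(\A_F)^{\br(U)}\subseteq U(\A_F)^{\{\alpha_i\}}$ suffices for the conclusion there.

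The one point where you diverge from both the statement and the paper is in (ii): you invoke ``the same finiteness, which \dots is carried over through the isomorphism $\br(X)\to\br(U)$,'' but part (ii) carries no finiteness hypothesis, so there is nothing to carry over; as written, your proof of (ii) establishes the result only under the additional assumption that $\br(X)/\br(F)$ is finite. The paper's proof of (ii) does not introduce a generating set at all: it works one class $\alpha\in\br(X)=\br(U)$ at a time, using only that for each single $\alpha$ and each $v\in S$ the evaluation maps on $U(F_v)$ and $X(F_v)$ have the same image. (Whether that class-by-class argument fully handles the simultaneity issue when $\br(X)/\br(F)$ is infinite is a separate question --- in all applications in the paper the quotient is $0$ or $\Z/2$, so the distinction is harmless --- but you should at least not present the finiteness as a consequence of the hypotheses of (ii).) If you want to keep your uniform $\Phi^S$/$\Psi$ framework, either add the finiteness assumption explicitly to (ii), or replace the finite generating set there by the single-class comparison of images, which is all that the implication in that direction actually requires.
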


\begin{proof}
(i) Let $\alpha_{i} \in \br(X)$, $i=1,\dots,n$, generate $\br(X)/\br(F)$.

 If $pr_{S}(X(\A_{F})^{\br(X)}) $ is strictly smaller than
$X(\A_{F}^S)$, then there exists an ad\`ele $\{M_{v}\}  \in X(\A_{F}^S)$
such that for each  $\{N_{v}\} \in \prod_{v \in S}   X(F_{v})$ there exists $\alpha_{i}$
such that
$$\sum_{v \notin S} \alpha_{i}(M_{v}) + \sum_{v \in S} \alpha_{i}(N_{v}) \neq 0 \in \Q/\Z.$$
In other words, the image of the map
$ \prod_{v \in S} X(F_{v}) \to (\Q/\Z)^n$
given by
$ \{N_{v}\}  \mapsto \sum_{v \in S} \alpha_{i}(N_{v})$
does not contain   $\{ -\sum_{v \notin S} \alpha_{i}(M_{v}) \} \in (\Q/\Z)^n$.
By continuity, the image of the induced map $ \prod_{v \in S} U(F_{v}) \to (\Q/\Z)^n$
is the same as the image of the map $ \prod_{v \in S} X(F_{v}) \to (\Q/\Z)^n$.
By Lemma~\ref{imageeval}, there exists an ad\`ele $\{M'_{v}\} \in U(\A_{F}^S)$ such that:
$$\{ -\sum_{v \notin S} \alpha_{i}(M'_{v}) \} = \{ -\sum_{v \notin S} \alpha_{i}(M_{v}) \} \in (\Q/\Z)^n.$$
Thus for each $\{N'_{v}\} \in \prod_{v \in S}   U(F_{v})$ there exists some $i$ such that
$$\sum_{v \notin S} \alpha_{i}(M'_{v}) + \sum_{v \in S} \alpha_{i}(N'_{v}) \neq 0 \in \Q/\Z.$$
Hence $\{M'_{v}\} \in U(\A_{F}^S)$ does not belong to $pr_{S}(U(\A_{F})^{\br U})$.

\medskip

(ii) Let  $\{M_{v}\}  \in U(\A_{F}^S)$ be an ad\`ele such that
 for each  $\{N_{v}\} \in \prod_{v \in S}   U(F_{v})$ there exists $\alpha \in \br(U)$
such that
$$\sum_{v \notin S} \alpha(M_{v}) + \sum_{v \in S} \alpha(N_{v}) \neq 0 \in \Q/\Z.$$
The ad\`ele $\{M_{v}\}  \in U(\A_{F}^S)$ defines an ad\`ele $\{M_{v}\}  \in X(\A_{F}^S)$.
By hypothesis $\br(X) = \br(U)$. Thus for each $\alpha \in \br(X) = \br(U)$, the image of
the evaluation map on $U(F_{v})$ coincides with the image of the evaluation map
on $X(F_{v})$. We conclude that for each   $\{N_{v}\} \in \prod_{v \in S}   X(F_{v})$
there exists $\alpha \in \br(X)$
such that
$$\sum_{v \notin S} \alpha(M_{v}) + \sum_{v \in S} \alpha(N_{v}) \neq 0 \in \Q/\Z.$$
\end{proof}

\section{The easy   fibration method}\label{general}

\begin{prop}  \label{fib}
Let $F$ be a number field and  $f: X \to Y$ be a   morphism of smooth quasi-projective  geometrically integral varieties over $F$.
Assume that all 
geometric fibres of $f$ are non-empty and integral. Let $W \subset Y$ be a nonempty open set such that
 $f_{W}: f^{-1}(W) \to W$ is smooth.

Let  $S$ be a finite set of places of $F$. Assume

(i) $Y$ satisfies strong approximation off $S$.

(ii) The fibres of $f$ above $F$-points of $W$ 
 satisfy strong
approximation off $S$.

(iii) For each $v \in S$    the map $f^{-1}(W)(F_{v}) \to W(F_{v})$  is onto.

Then 
$X$ satisfies strong approximation off  $S$.
\end{prop}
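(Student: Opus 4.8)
The plan is to prove strong approximation for $X$ off $S$ directly from the definition: given a nonempty open set $\Omega \subset X(\A_F^S)$ with $X(\A_F) \neq \emptyset$, produce an $F$-point of $X$ inside $\Omega \times \prod_{v\in S} X(F_v)$. By Proposition~\ref{strongUX} it suffices to prove strong approximation off $S$ for the open subvariety $f^{-1}(W)$, since $f^{-1}(W)$ is dense in $X$ (the geometric fibres are integral, so the generic fibre is geometrically integral, hence $f^{-1}(W)$ is a dense open of the irreducible $X$). So I replace $X$ by $f^{-1}(W)$ and $f$ by $f_W : f^{-1}(W)\to W$; now $f$ is smooth surjective with geometrically integral fibres, and condition (iii) says $f(f^{-1}(W)(F_v)) = W(F_v)$ for $v\in S$ — though in fact, after shrinking, I should be careful and instead keep $X$ and $W$ but only approximate at places where the fibration is smooth, enlarging $S$ if necessary via Proposition~\ref{strongSS'} is not quite available since we want strong approximation off the \emph{given} $S$; so the honest route is to work with a point of $X(\A_F)$ that already lies in $f^{-1}(W)(F_v)$ for all $v$, which exists because $f^{-1}(W)$ is a dense open of the smooth $X$ (so $f^{-1}(W)(F_v)$ is dense in $X(F_v)$ for every $v$) and because $f^{-1}(W)$, being geometrically integral, has points in ${\bf U}(\frak o_v)$ for almost all $v$.

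The key steps, in order. \emph{Step 1:} Start with $\{M_v\}_{v\notin S}\in f^{-1}(W)(\A_F^S)$ lying in a given basic open neighbourhood, i.e. $M_v$ is constrained in a nonempty open $\Omega_v\subset f^{-1}(W)(F_v)$ for $v$ in a finite set $T_0$ disjoint from $S$, and $M_v\in {\bf U}(\frak o_v)$ otherwise. Also fix, using $X(\A_F)\neq\emptyset$ and density of $f^{-1}(W)(F_v)$ in $X(F_v)$, points $M_v\in f^{-1}(W)(F_v)$ for $v\in S$. \emph{Step 2:} Push forward by $f$: set $y_v = f(M_v) \in W(F_v)\subset Y(F_v)$ for all $v$, getting an adelic point $\{y_v\}$ of $Y$; note $\{y_v\}_{v\notin S}\in Y(\A_F^S)$. \emph{Step 3:} Apply strong approximation for $Y$ off $S$ (hypothesis (i)): choose $f$-stable open neighbourhoods — more precisely, since $f_W$ is smooth, shrink each $\Omega_v$ ($v\in T_0$) so that its image $f(\Omega_v)$ is open in $W(F_v)$, and approximate $\{y_v\}_{v\notin S}$ by a point $y\in Y(F)$ with $y\in f(\Omega_v)$ for $v\in T_0$, $y\in W(F_v)$ for all remaining $v$ (using that $W$ is open and that for almost all $v$ one has $y_v\in {\bf W}(\frak o_v)$, an open condition), and crucially $y$ close enough to $y_v$ for $v\in S$ that the fibre $X_y$ still has the local solubility we need. \emph{Step 4:} Consider the fibre $Z := f^{-1}(y)$, a geometrically integral smooth $F$-variety (it lies over an $F$-point of $W$). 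For $v\in T_0$, because $f_W$ is smooth and $y\in f(\Omega_v)$, the fibre $Z(F_v)$ meets $\Omega_v$ in a nonempty open set $\Omega_v' := \Omega_v\cap Z(F_v)$. For $v\in S$, use hypothesis (iii): the map $f^{-1}(W)(F_v)\to W(F_v)$ is onto, so $Z(F_v)\neq\emptyset$ (here I use that $y$ was taken close enough to $y_v$; by smoothness of $f_W$ the condition "$Z_{y'}(F_v)\neq\emptyset$" is open in $y'$, or alternatively implicit-function-theorem lifting of $M_v$). For $v\notin S\cup T_0$, similarly $Z(\frak o_v)\neq\emptyset$ for almost all such $v$ (spreading out $Z$), and the finitely many remaining places can be absorbed into $T_0$. \emph{Step 5:} Apply strong approximation off $S$ for the fibre $Z$ (hypothesis (ii)) to the adelic point just constructed: we get $P\in Z(F)\subset X(F)$ with $P\in\Omega_v'\subset\Omega_v$ for $v\in T_0$ and $P\in Z(\frak o_v)$ for $v\notin S\cup T_0$, i.e. $P$ lies in the prescribed open set of $X(\A_F^S)$ and extends to an adelic point of $X$, which is what we wanted.

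The main obstacle is Step 3–4: matching the approximation on the base with solubility of the fibre over the resulting rational point, simultaneously at the places of $S$ (where we only have hypothesis (iii), surjectivity of $f^{-1}(W)(F_v)\to W(F_v)$, and must ensure the global point $y$ chosen in Step 3 has $X_y(F_v)\neq\emptyset$) and at the places of $T_0$ (where we must arrange $y\in f(\Omega_v)$ while $f(\Omega_v)$ is genuinely open — this is exactly where smoothness of $f_W$ is used, via the open-map property of smooth morphisms on local points). The cleanest way to handle the $S$-places is: by smoothness of $f_W$ at $M_v$, the image $f(\Omega_v')$ of a small open neighbourhood $\Omega_v'\ni M_v$ in $f^{-1}(W)(F_v)$ is open in $W(F_v)$; include the requirement $y\in f(\Omega_v')$ for $v\in S$ into the strong-approximation input for $Y$ in Step 3 — this is legitimate because strong approximation off $S$ for $Y$, combined with the fact that $Y(F)$ is moreover dense in $\prod_{v\in S} Y(F_v)$ (a consequence of strong approximation off $S$ noted after Definition~\ref{defstrong}), lets us also control $y$ at the places of $S$. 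Then $Z(F_v)\supset\Omega_v'\cap Z(F_v)\neq\emptyset$ for $v\in S$ as well, and hypothesis (iii) is what guarantees these neighbourhoods $\Omega_v'$ exist with nonempty image, i.e. that $M_v$ can be chosen in $f^{-1}(W)(F_v)$ mapping wherever we need on $W(F_v)$. Everything else (spreading out $X$, $W$, the fibre; Weil estimates for $\frak o_v$-points of geometrically integral fibres at almost all $v$; finiteness arguments to enlarge $T_0$) is routine.
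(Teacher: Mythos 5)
Your proposal follows the same fibration strategy as the paper's proof: approximate on the base $Y$, then apply strong approximation on the fibre over the resulting rational point. The skeleton (Steps 1, 2, 4, 5) is right, but your handling of the places of $S$ rests on a false claim. You assert that strong approximation off $S$ for $Y$ implies that $Y(F)$ is dense in $\prod_{v\in S}Y(F_v)$, and you use this to force the global point $y$ into prescribed neighbourhoods $f(\Omega'_v)$ at the places $v\in S$. This is not a consequence of the definition: strong approximation off $S$ asserts density of $Y(F)$ in the image of $Y(\A_F)$ in $Y(\A_F^S)$, where the $S$-components have been discarded, and the remark after Definition~\ref{defstrong} that you invoke gives weak approximation only at finite sets of places \emph{away from} $S$. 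Fortunately the step this claim is meant to justify is superfluous: hypothesis (iii) says $f^{-1}(W)(F_v)\to W(F_v)$ is surjective for each $v\in S$, so for \emph{any} $F$-point $y$ of $W$ the fibre $Z=f^{-1}(y)$ automatically has $Z(F_v)\neq\emptyset$ for all $v\in S$. No control of $y$ at the places of $S$, and no ``closeness to $y_v$'' there, is needed; this is exactly how the paper uses (iii). Deleting all the material about approximating at $S$ repairs the argument. (Do check that $y$ is an $F$-point of $W$ and not merely of $Y$: this follows from $y\in f(\Omega_v)\subset W(F_v)$ at a single place $v\in T_0$, since $W$ is Zariski open; enlarge $T_0$ by one place if it is empty.)

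A second, smaller defect: for $v\notin S\cup T_0$ you need integral points on a model of the fibre over $y$, and you propose to obtain them for ``almost all such $v$ by spreading out $Z$'' and to ``absorb the finitely many remaining places into $T_0$''. This is circular: those exceptional places depend on $y$, which is chosen only after $T_0$ and the target open set are fixed, and at an absorbed place you would still need a nonempty open subset of $\mathbf{X}(\frak o_v)\cap Z(F_v)$, i.e.\ the very nonemptiness in question. The paper's remedy is to spread out the morphism $f$ itself to $\phi:\mathcal X\to\mathcal Y$ over $\frak o_T$ \emph{before} choosing $y$, and to apply Lang--Weil uniformly to the (geometrically integral) fibres of $\phi$ over all closed points of $\mathcal Y$, together with Hensel's lemma, so that $\mathcal X(\frak o_v)\to\mathcal Y(\frak o_v)$ is onto for every $v\notin T$; this surjectivity then applies to the chosen $\mathbf N\in\mathcal Y(\frak o_T)$ with no further enlargement of $T$. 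With these two corrections your argument coincides with the paper's.
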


\begin{proof}
There exist a finite set $T$ of places  containing all archimedean places and
a   morphism of smooth quasiprojective $\frak o_{T}$-schemes $\phi: \mathcal{X}  \to \mathcal{Y}$
which restricts to $f:X \to Y$ over $F$, and such that:

(a) All geometric fibres of $\phi$ are geometrically integral.

(b) For any closed point $m$ of $\mathcal Y$, the fibre at $m$, which is a variety over
the finite field $\kappa(m)$, contains a smooth $\kappa(m)$-point.

(c) For any $v \notin T$, the induced map $\mathcal{X}(\frak o_{v}) \to \mathcal{Y}(\frak o_{v})$ is onto.

The proof of this statement combines standard results from EGA IV 9
 and the Lang-Weil estimates
for the number of points of integral varieties over a finite field. Many variants have already
appeared in the literature.

\medskip

To prove the proposition, it is enough to show:

  {\it Given any finite set $T$ as above, with $S \subset T$, and  given, for each place $v \in T \setminus S$,
an open set $U_{v} \subset X(F_{v})$  such that  the open set
$$\prod_{v \in S} X(F_{v}) \times \prod_{v \in T\setminus S}U_{v} \times \prod_{v \notin T} {\bf X}(\frak o_{v})   $$
of $X(\A_F)$ is not empty, then  this set contains a point of the diagonal image   of $X(F)$ in $X(\A_{F})$.}

The Zariski open set $f^{-1}(W) \subset X$ is not empty.
For each  $v \in  T\setminus S$, we may thus replace $U_{v}$ by the  nonempty  open set $U_{v} \cap f^{-1}(W)(F_{v})$.
Since $f$ is smooth on $f^{-1}(W)$, $f(U_{v}) \subset Y(F_{v})$ is an  open set.
By hypothesis (i), in this open set
there exists a point $N \in Y(F)$ whose diagonal image lies in this open set.
 Let $Z=X_{N}=f^{-1}(N)$.
The point $N$ comes from a point $\bf N$ in $\mathcal{Y}(\frak o_{T})$.
The  $\frak o_{T}$-scheme $\mathcal{Z}:=\phi^{-1}({\bf N})$ is thus  a model of $Z$.
For $v \notin T$,  statement (c)  implies $\mathcal{Z}(\frak o_{v}) \neq \emptyset$.
By assumption (iii), we have $Z(F_v) \neq \emptyset$ for each $v \in S$.
For $v\in T \setminus S$, the intersection $U_v \cap Z(F_v) $ by construction  is a nonempty open set
of  $Z(F_v) $.  Assumption (ii) now guarantees that the product
$$ \prod_{v \in S}Z(F_v) \times \prod_{v \in T\setminus S} U_v \cap Z(F_v)  \times 
\prod_{v \notin T} \mathcal{Z}(\frak o_{v})$$
contains the diagonal image of a point of $Z(F)$. This defines a point in $X(F)$
which lies in the given  open  set of $X(\A_F)$.
\end{proof}

\bigskip

Let us recall a well known fact.

\begin{prop}\label{eichlerkneser}
Let $F$ be a  number field. Let $q(x_1, \dots, x_n)$ be a non-degenerate quadratic
form over  $F$ and let $c \in F^\times$. Assume $n \geq 4$.
 Let $X$ be the
 smooth affine quadric   defined by
$q(x_1, \dots, x_n)=c$.
Suppose  $X(F_{v}) \neq \emptyset$ for each real completion $F_{v}$.  
Then $X(F) \neq \emptyset$.
Let $v_{0}$ be a place of $F$ such that the quadratic form $q$
is isotropic at $v_{0}$.
Then $X$
 satisfies strong appproximation
off any finite set $S \subset \Omega_{F}$ containing $v_{0}$. 
\end{prop}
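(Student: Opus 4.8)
The plan is to prove the Hasse principle first and then, knowing that $X$ has a rational point, to recognise $X$ as a homogeneous space of a semisimple simply connected group and to invoke the classical strong approximation theorem for such groups. For the strong approximation assertion there is no harm in assuming $X(F)\neq\emptyset$: if $X(\A_F)=\emptyset$ the statement is vacuous, and otherwise $X$ has a point at every real place, so $X(F)\neq\emptyset$ already by the first assertion. To prove that first assertion, note that $X$ has an $F$-point precisely when the $(n+1)$-variable form $q\perp\langle -c\rangle$ is isotropic over $F$ — a nontrivial zero with vanishing last coordinate would make $q$ itself isotropic, hence universal, so $q$ represents $c$. Since $n+1\geq 5$, this form is automatically isotropic over every non-archimedean completion and trivially isotropic over the complex places, while over a real place it is isotropic exactly because $X(F_v)\neq\emptyset$ (an isotropic real form being universal). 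By Hasse--Minkowski it is then isotropic over $F$, so $X(F)\neq\emptyset$. This is the one point where $n\geq 4$ enters the Hasse principle: for $n=3$ the relevant form has only four variables and may be anisotropic $p$-adically.

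Next, fixing $x_0\in X(F)$ and letting $q'$ denote the restriction of $q$ to $x_0^{\perp}$, a non-degenerate form of rank $n-1\geq 3$, I would identify $X\cong G/H$ as $F$-varieties with $G=\Spin(q)$ and $H=\Spin(q')$. Here one uses Witt's theorem (transitivity of $\SO(q)$ on $X_{\ov F}$ with stabiliser $\SO(q')$ at $x_0$), lifts along the two-fold cover $\Spin(q)\to\SO(q)$, and checks by a Clifford-algebra computation that the natural copy of $\Spin(q')$ inside $\Spin(q)$ is exactly the stabiliser of $x_0$ — the point being that $-1\in\Spin(q')$ is the nontrivial element of $\ker(\Spin(q)\to\SO(q))$, so no spurious $\mu_2$ appears. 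The role of the hypothesis $n\geq 4$ here is that $H$ is then connected, semisimple and \emph{simply connected}, rather than the one-dimensional (possibly anisotropic) torus one gets when $n=3$; in particular $H$ has no nonconstant invertible functions, trivial geometric Picard group and $\br(H)=\br(\ov F)$, so the fibres of $G\to X$ contribute no Brauer--Manin obstruction.

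Finally, since $q$ is isotropic at $v_0$ the group $G(F_{v_0})=\Spin(q)(F_{v_0})$ is non-compact, so by the strong approximation theorem for semisimple simply connected groups (Eichler, Kneser, Platonov; see \cite{CTX} and the references there) $G$ satisfies strong approximation off $S$. It then remains to transfer this property along the torsor $G\to X=G/H$: because $H$ is semisimple simply connected, $H^{1}(F_v,H)=0$ at every non-archimedean place, so an $S$-adelic point of $X$ lifts, away from a finite set of places, to an $S$-adelic point of $G$, and combining these lifts with strong approximation in $G$ produces the desired rational point of $X$. I expect this last transfer to be the only genuine obstacle — in particular the bookkeeping at the real places, where $H^{1}(F_v,H)$ may fail to vanish — and, since it is classical, I would quote it rather than redo it. This step also explains why one cannot simply feed the statement into the fibration method of \S\ref{general}: projecting $X$ onto $\A^1$ along one variable produces rank-$(n-1)$ quadric fibres, and for $n=4$ such fibres need not satisfy strong approximation, so hypothesis (ii) of Proposition~\ref{fib} fails.
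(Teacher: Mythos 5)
Your proposal is correct and is essentially the paper's own approach: the paper proves this proposition simply by citing \cite{CTX}, Thm.~3.7(b) and Thm.~6.1, and the argument underlying those results is exactly the one you sketch (Hasse--Minkowski for the $(n+1)$-variable form, the identification $X\cong\Spin(q)/\Spin(q')$ with stabilizer semisimple and simply connected precisely because $n\geq 4$, strong approximation for the simply connected group $\Spin(q)$ off $S$, and descent along the torsor, quoting the classical results at the same points where the paper quotes them). The only hypothesis you should check rather than wave at is that for $n=4$ \emph{every} $F$-simple factor of $\Spin(q)$ is noncompact at $v_0$; this does hold, because when $\Spin(q)$ decomposes into two $F$-simple factors (trivial discriminant) they are both $\SL_1$ of one and the same quaternion algebra, which is split at $v_0$ by the isotropy of $q$ there.
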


\begin{proof}
This goes back to Eichler and Kneser.
See \cite{CTX} Thm. 3.7~(b) and  Thm. 6.1.
\end{proof}

\begin{lem}\label{singularfibres}
Let $q(x_1, \dots, x_n)$ ($n \geq 1$)  be a non-degenerate quadratic
form over   a field $k$ of characteristic different from 2.
Let $p(t) \in k[t]$ be a nonzero polynomial.
Let $X$ be the affine $k$-scheme defined by the  equation
$q(x_1, \dots, x_n) = p(t).$
The singular points of  $X$ are the points defined by
$x_{i}=0$ (all $i$) and $t=\theta$ with $\theta$ a multiple root of $p(t)$.
In particular, if $p(t)$ is a separable polynomial, then $X$ is smooth over $k$. \qed
\end{lem}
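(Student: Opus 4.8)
The plan is to compute the Jacobian criterion for the affine scheme $X \subset \A^{n+1}_k$ defined by the single equation $g(x_1,\dots,x_n,t) := q(x_1,\dots,x_n) - p(t) = 0$. Since $X$ is a hypersurface, a closed point $P$ of $X$ is singular precisely when all partial derivatives of $g$ vanish at $P$ (together with $g(P)=0$, which holds by assumption that $P \in X$); here I use that $X$ is cut out by one equation in affine space, so the local ring has the expected dimension and the Jacobian criterion detects non-smoothness exactly. Writing $q(x) = \sum_{i,j} a_{ij} x_i x_j$ with $a_{ij}=a_{ji}$, we have $\partial g/\partial x_i = 2\sum_j a_{ij} x_j$ and $\partial g/\partial t = -p'(t)$.

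The key step is then linear algebra in characteristic $\neq 2$. The vanishing of all $\partial g/\partial x_i$ says that the vector $(x_1,\dots,x_n)$ lies in the kernel of the matrix $(2a_{ij})$; since $\car k \neq 2$ and $q$ is non-degenerate, this matrix is invertible, so $x_1 = \dots = x_n = 0$. Feeding this back into the equation $g(P) = 0$ gives $q(0) = p(t)$, i.e.\ $p(\theta) = 0$ where $\theta$ is the $t$-coordinate. Finally $\partial g/\partial t = 0$ forces $p'(\theta) = 0$, so $\theta$ is a common root of $p$ and $p'$, which over a field (or after base change to the algebraic closure, as the singular locus is a geometric notion) is exactly the condition that $\theta$ be a multiple root of $p$. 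Conversely, any such point with $x_i = 0$ and $\theta$ a multiple root visibly kills every partial derivative and lies on $X$, so it is singular. This proves the description of the singular locus.

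The last sentence is then immediate: if $p(t)$ is separable it has no multiple roots (even in $\bar k$), so the singular locus is empty, and a scheme of finite type over $k$ which is a hypersurface of the expected dimension everywhere with empty singular locus is smooth over $k$.

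I do not anticipate a genuine obstacle here; the one point requiring a little care is the bookkeeping between ``singular'' over $k$ and over $\bar k$ — the cleanest route is to define the singular locus via the Jacobian criterion after base change to $\bar k$ (or to invoke that the Jacobian rank is insensitive to the base field for a hypersurface of pure codimension one), so that the phrase ``multiple root of $p(t)$'' unambiguously means a repeated root in $\bar k$, equivalently a common zero of $p$ and $p'$. Everything else is the routine computation sketched above.
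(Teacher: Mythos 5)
Your Jacobian-criterion computation is correct: non-degeneracy of $q$ in characteristic $\neq 2$ forces $x_1=\dots=x_n=0$ at any singular point, and then $g=0$ together with $\partial g/\partial t=0$ gives $p(\theta)=p'(\theta)=0$, i.e.\ $\theta$ is a multiple root. The paper states this lemma with no proof (it is marked as immediate), and your argument is exactly the routine verification being left to the reader.
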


 \begin{prop}\label{fourfinal}   Let $F$ be a number field and $X$ be an $F$-variety defined by an equation
$$q(x_1, \dots, x_n) = p(t) $$
 where $q(x_1, \dots, x_n)$ is a non-degenerate quadratic
form with $n\geq 4$ over $F$ and $p(t) \neq 0$ is a polynomial in $F[t]$.
Let $\tilde{X}$ be any smooth geometrically integral variety which contains
the smooth locus $X_{smooth}$ as a dense open set.
Assume  $X_{smooth}(F_{v}) \neq \emptyset$ for each real place $v$ of $F$.

(1)   $\tilde{X}(F)$ is Zariski-dense in $X$.

(2) $\tilde{X}$ satisfies weak approximation.

Let $v_{0}$ be a place of $F$ such that $q$ is isotropic over $F_{v_{0}}$.

(3)  
$\tilde{X}$ satisfies strong  approximation off any finite set $S$ of places
which contains $v_{0}$.
\end{prop}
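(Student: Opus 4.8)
The plan is to run the easy fibration method of Proposition~\ref{fib} on the projection $f\colon X_{smooth}\to \mathbb{A}^1_F$, $(x_1,\dots,x_n,t)\mapsto t$, taking for $W\subset\mathbb{A}^1_F$ the complement of the finite set of roots of $p$. By Lemma~\ref{singularfibres} the $F$-scheme $X_{smooth}$ is smooth; it is quasi-projective and geometrically integral, being a dense open subscheme of the affine variety $q(x)=p(t)$, whose geometric irreducibility follows from $\rank q=n\ge 3$. Over $W$ the equation $q(x)=p(t)$ has no singular point, so $f_W\colon f^{-1}(W)\to W$ is smooth. Every geometric fibre of $f$ is nonempty and integral: over a point where $p$ does not vanish it is the smooth affine quadric $q(x)=c$ with $c\ne0$, geometrically integral since $\rank q\ge 3$; over a simple root of $p$ it is the affine quadric cone $q(x)=0$, again geometrically integral for $\rank q\ge 3$; over a multiple root $\theta$ it is that cone with its vertex $(0,\dots,0,\theta)$ removed, still geometrically integral and nonempty because the cone has dimension $n-1\ge 3$.

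The heart of the argument is the claim that for every place $v_0$ at which $q$ is isotropic, $X_{smooth}$ satisfies strong approximation off the single place $\{v_0\}$. To prove this I apply Proposition~\ref{fib} with $Y=\mathbb{A}^1_F$ and with $S=\{v_0\}$. Hypothesis (i) holds because affine space satisfies strong approximation off any nonempty set. For (ii), the fibre of $f$ above $a\in W(F)$ is the smooth affine quadric $q(x)=p(a)$ with $p(a)\in F^\times$ and $n\ge 4$; if this quadric has a point at every real place it satisfies strong approximation off $\{v_0\}$ by Proposition~\ref{eichlerkneser}, and if not it has no adelic point, so the property holds vacuously. For (iii), $q$ is isotropic over $F_{v_0}$, hence represents every element of $F_{v_0}$, so $f^{-1}(W)(F_{v_0})\to W(F_{v_0})$ is onto; here it is essential that one works with $S=\{v_0\}$ rather than a larger $S$, since at an archimedean place where $q$ is definite surjectivity would fail. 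One point needs attention inside the proof of Proposition~\ref{fib}: the fibre $Z=f^{-1}(N)$ eventually selected must have real points everywhere in order to invoke Proposition~\ref{eichlerkneser}. This is exactly where the hypothesis $X_{smooth}(F_v)\ne\emptyset$ at every real $v$ is used: it makes the local conditions to be approximated nonempty at the real places, and then $N\in f(U_v)$ with $U_v\ne\emptyset$ forces $Z(F_v)\ne\emptyset$ for every real $v\ne v_0$, while for a real $v_0$ one uses (iii) directly.

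Statement (3) follows at once: given $S$ containing such a $v_0$, Proposition~\ref{strongSS'} promotes strong approximation off $\{v_0\}$ to strong approximation off $S$ for $X_{smooth}$, and Proposition~\ref{strongUX} transfers it from the dense open $X_{smooth}$ to the smooth variety $\tilde X$. For statement (2), note that a nondegenerate form in $n\ge 4$ variables over a nonarchimedean local field represents every nonzero element, so $X_{smooth}(F_v)\ne\emptyset$ for every finite $v$ (choose $a$ with $p(a)\ne0$), and $X_{smooth}(F_v)\ne\emptyset$ at the real places by hypothesis; hence $X_{smooth}(\mathbb{A}_F)\ne\emptyset$. Since $q$ is isotropic at all but finitely many places, given any finite set $\Sigma$ of places one may choose $v_0\notin\Sigma$ with $q$ isotropic over $F_{v_0}$, and strong approximation off $\{v_0\}$ then makes $X_{smooth}(F)$ dense in $\prod_{v\in\Sigma}X_{smooth}(F_v)$; as $\Sigma$ was arbitrary this is weak approximation, which passes to $\tilde X$ since $X_{smooth}$ is a dense open of the smooth variety $\tilde X$. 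Finally, statement (1) follows from (2): weak approximation at one place $v$ together with $X_{smooth}(F_v)\ne\emptyset$ and the smoothness of $X_{smooth}$ makes $X_{smooth}(F)$ dense in the $n$-dimensional $v$-adic manifold $X_{smooth}(F_v)$, hence Zariski-dense in $X_{smooth}$ and so in $X$, and these points lie in $\tilde X(F)$ through the open immersion $X_{smooth}\hookrightarrow\tilde X$.

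The step I expect to be the main obstacle is verifying that $f\colon X_{smooth}\to\mathbb{A}^1_F$ meets all the hypotheses of the fibration method — especially that passing to the smooth locus leaves the bad fibres (over multiple roots of $p$) nonempty and geometrically integral — together with the bookkeeping at the real places, where one must arrange, via the real-place hypothesis and the reduction to $S=\{v_0\}$, that the affine quadric fibre ultimately produced is covered by Proposition~\ref{eichlerkneser}, whose statement carries a nonemptiness requirement at the real places.
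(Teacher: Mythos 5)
Your proof of part (3) is correct and is essentially the paper's own argument: reduce to $S=\{v_0\}$ via Proposition~\ref{strongSS'}, apply the fibration Proposition~\ref{fib} to $t\colon X_{smooth}\to\mathbf{A}^1_F$ with $W$ the locus $p(t)\neq 0$, using Proposition~\ref{eichlerkneser} for the fibres and Lemma~\ref{singularfibres} for the geometry, then pass to $\tilde X$ by Proposition~\ref{strongUX}. The only divergence is that for the ``easy'' statements (1) and (2) the paper simply cites Prop.~3, p.~66 of \cite{CTSaSD}, whereas you derive them self-containedly from the strong approximation statement off a varying isotropic place $v_0$; your derivation is valid.
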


\begin{proof} Statements (1) and (2), which are easy, are special cases of Prop.~3., p.~66 of \cite{CTSaSD}. Let us prove (3) for $\tilde{X}=X_{smooth}$, the smooth locus of $X$.
Let $f: X_{smooth}  \to  {\bf A}^1_{F}$ be given by the coordinate $t$. 
By Lemma \ref{strongSS'}, it suffices to prove the theorem for $S=\{v_{0}\}$.
Let $W$ be the complement of $p(t)=0$ in ${\bf A}^1_{F}$. 
Given Prop. \ref{eichlerkneser}, Lemma \ref{singularfibres},
statement (3) for $\tilde{X}=X_{smooth}$ is an immediate consequence of  Proposition \ref{fib} applied to the composite maps
$X_{smooth} \subset X \to {\bf A}^1_{F}$.
  Statement (3)  for an arbitrary $\tilde X$ is  then an immediate application of Proposition  \ref{strongUX}.
\end{proof}

\section{Algebra and arithmetic of the equation $q(x,y,z)=a$ } \label{basicquadric}

Let $q(x,y,z)$ be a nondegenerate quadratic form over a field $k$ of characteristic zero and let $a \in k^*$.
Let $Y/k$ be the affine quadric defined by the equation
$$q(x,y,z)=a.$$
This is an open set in the smooth projective  quadric defined by the homogenous equation
$$q(x,y,z)-au^2=0.$$
Let $d=-a.det(q)  \in k^{\times}$. 

\begin{prop}\cite[\S 5.6, \S 5.8]{CTX}\label{computbr}
Assume $Y(k)\neq \emptyset$.
If $d$ is a square, then $\br(Y)/\br(k)=0$. If $d$ is not a square, then
$\br(Y)/\br(k)=\Z/2$. For any field extension $K/k$,
the natural map $\br(Y)/\br(k) \to \br(Y_{K})/\br(K)$ is surjective.

(i) If $\alpha x+ \beta y + \gamma z +\delta =0$ is an affine equation for the tangent plane of $Y$
at a $k$-point of the projective quadric $q(x,y,z)-au^2=0.$
then the quaternion algebra $(\alpha x+ \beta y + \gamma z +\delta,d) \in \br(k(Y))$
belongs to $\br(Y)$ and it generates $\br(Y)/\br(k)$.

(ii) Assume $q(x,y,z)=xy-det(q)z^2$. Then the quaternion algebra $(x,d)  \in \br(k(Y))$
belongs to $\br(Y)$ and it generates $\br(Y)/\br(k)$. \qed
\end{prop}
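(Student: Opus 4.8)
The plan is to realise $Y$ as the complement of a smooth conic in a smooth projective quadric surface, to compute $\br(Y)/\br(k)$ as a Galois-cohomology group by means of the Hochschild--Serre spectral sequence, and then to exhibit the generator by an explicit residue calculation. To begin with, $Y$ is the chart $\{u\neq 0\}$ of the smooth projective quadric $Q\subset\mathbb P^3$ defined by $q(x,y,z)-au^2=0$ (smooth since $q$ is non-degenerate and $a\neq 0$), and $C:=Q\setminus Y=Q\cap\{u=0\}$ is the smooth conic $q(x,y,z)=0$ in the plane at infinity. Over $\overline k$ one has $Q_{\overline k}\cong\mathbb P^1\times\mathbb P^1$, with $\Pic(Q_{\overline k})=\Z[R_1]\oplus\Z[R_2]$ freely generated by the two rulings, with $[C]=[R_1]+[R_2]$, and with $\Gamma:=\Gal(\overline k/k)$ acting on $\{R_1,R_2\}$ through the quadratic character cut out by $d=-a\det q$ (the two rulings are defined over $k(\sqrt d)$ and are permuted exactly when $d\notin k^{\times 2}$); moreover $C_{\overline k}\cong\mathbb P^1$.

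Next I would show $\br(Y_{\overline k})=0$ and $\overline k[Y]^{\times}=\overline k^{\times}$. For the first, purity along the smooth divisor $C_{\overline k}$ in the smooth surface $Q_{\overline k}$ gives an exact sequence $0\to\br(Q_{\overline k})\to\br(Y_{\overline k})\to H^1(C_{\overline k},\Q/\Z)$, and $\br(\mathbb P^1\times\mathbb P^1)=0$ while $H^1(\mathbb P^1_{\overline k},\Q/\Z)=0$. For the second, a unit on $Y_{\overline k}$ has divisor on $Q_{\overline k}$ supported on the irreducible curve $C$, hence equal to an integer multiple of the non-torsion class $[C]$; so this multiple is $0$ and the unit is constant. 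Plugging $\br(Y_{\overline k})=0$ and $\overline k[Y]^{\times}=\overline k^{\times}$ into the low-degree exact sequence of the Hochschild--Serre spectral sequence for $\G_m$ on $Y_{\overline k}\to Y$ yields
$$0\to\Pic Y\to\Pic(Y_{\overline k})^{\Gamma}\to\br(k)\to\br(Y)\to H^1(k,\Pic(Y_{\overline k}))\to H^3(k,\overline k^{\times}),$$
whence $\br(Y)/\br(k)$ embeds into $H^1(k,\Pic(Y_{\overline k}))$. Now $\Pic(Y_{\overline k})=\Pic(Q_{\overline k})/\Z[C]\cong\Z$, on which $\Gamma$ acts through the sign character attached to $d$; using the exact sequence $0\to\Z^{-}\to\Z[\Gamma/\Gamma']\to\Z\to 0$ (with $\Gamma'$ the index-two subgroup fixing $\sqrt d$) and Shapiro's lemma one computes $H^1(k,\Pic(Y_{\overline k}))=0$ if $d\in k^{\times 2}$ and $\Z/2$ if $d\notin k^{\times 2}$. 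This already proves the statement when $d$ is a square, and in the remaining case gives the bound $|\br(Y)/\br(k)|\le 2$.

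It remains, when $d\notin k^{\times 2}$, to produce a nonzero class; this is where (i) comes in. Pick $P_0\in Q(k)$ (possible since $Y(k)\neq\emptyset$); as $C$ is smooth, $T_{P_0}Q\neq\{u=0\}$, so $T_{P_0}Q$ has an affine equation $\ell=\alpha x+\beta y+\gamma z+\delta$. Writing $\ell=\ell_{\mathrm{hom}}/u$ on $Q$, and using that the tangent-plane section of $Q$ is $L_1+L_2$ (the two lines of $Q$ through $P_0$, with multiplicity one) while $\{u=0\}\cap Q=C$, one gets $\mathrm{div}_Q(\ell)=L_1+L_2-C$; hence on $Y$ the divisor of $\ell$ is the reduced divisor $D:=(L_1\cup L_2)\cap Y$, and $v_C(\ell)=-1$. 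The lines $L_1,L_2$ are conjugate over $k(\sqrt d)$ and pass through the rational point $P_0$, so $D$ is geometrically the union of two components interchanged by $\Gal(k(\sqrt d)/k)$; therefore $k(\sqrt d)\subset k(D)$, i.e.\ $d$ is a square in $k(D)$, so the residue $\partial_D(\ell,d)$ vanishes. Since the divisors of $\ell$ and of the constant $d$ are supported on $D$, all residues of the quaternion class $(\ell,d)\in\br(k(Y))$ vanish, and purity for the Brauer group of the regular surface $Y$ gives $(\ell,d)\in\br(Y)$. On the other hand $\partial_C(\ell,d)$ equals the class of $d$ in $k(C)^{\times}/k(C)^{\times 2}$, which is nontrivial because $C$ is geometrically connected (so $d\notin k(C)^{\times 2}$); hence $(\ell,d)$ is ramified along $C$, whereas every class coming from $\br(k)$ is unramified there, so $(\ell,d)$ is nonzero in $\br(Y)/\br(k)$. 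Together with the bound $|\br(Y)/\br(k)|\le 2$, this shows $(\ell,d)$ generates $\br(Y)/\br(k)=\Z/2$, proving (i); statement (ii) is the special case $q=xy-(\det q)z^2$, $P_0=[0:1:0:0]$, $\ell=x$. For the last assertion, the class $(\ell,d)$ restricts in $\br(Y_K)$ to the analogous class, which by the same argument generates $\br(Y_K)/\br(K)$ (it is ramified along $C_K$ precisely when $d\notin K^{\times 2}$, and $\br(Y_K)/\br(K)=0$ otherwise), so $\br(Y)/\br(k)\to\br(Y_K)/\br(K)$ is onto.

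The main obstacle is the verification in the third paragraph that $(\ell,d)$ is unramified on all of $Y$: the two lines of $Q$ through the chosen rational point are only defined over $k(\sqrt d)$, yet the reduced divisor they span forces $d$ to become a square on it, and this is exactly what annihilates the a priori residue of $(\ell,d)$ there. Once the smooth compactification $Q\supset Y$ and the description of $\Pic(Q_{\overline k})$ are in hand, the inputs $\br(Y_{\overline k})=0$, $\overline k[Y]^{\times}=\overline k^{\times}$, and the Galois-cohomology computation are routine.
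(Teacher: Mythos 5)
Your argument is correct and is essentially the one in the cited source: the paper itself gives no proof of Proposition \ref{computbr} beyond the reference to \cite{CTX}*{\S 5.6, \S 5.8}, and there the computation is carried out exactly as you do it, via the smooth compactification $q(x,y,z)=au^2$, the identification $\Pic(Y_{\ov k})\simeq \Z$ with Galois action through $k(\sqrt{d})$, the Hochschild--Serre bound $\br(Y)/\br(k)\hookrightarrow H^1(k,\Pic(Y_{\ov k}))$, and the tangent-plane class $(\alpha x+\beta y+\gamma z+\delta,\,d)$ shown to be unramified on $Y$ (the residue along the conjugate pair of lines dies because $d$ becomes a square in their function field) but ramified along the conic at infinity. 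Your treatment of the base-change surjectivity and of part (ii) as the special case $\ell=x$ is likewise the intended one.
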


 \begin{lem} \label{good} Let $F$ be a number field.
 Let $q(x_1, \dots, x_n)$ be a nondegenerate quadratic form over $F$.
  Let $v$ be a non-dyadic valuation of $F$. Assume $n \geq 3$.
 If the coefficients of $q(x_1, \dots, x_n)$ are in $\frak o_{v}$ and the determinant of $q(x_1, \dots, x_n)$ is a unit in $\frak o_{v}$, then for any $d \in \frak o_{v}$  the equation
$q(x_1, \dots, x_n)=d $ admits a solution $(\alpha_1,  \dots, \alpha_n)$ in $\frak o_{v}$ such that one of $\alpha_1, \dots, \alpha_n$ is a unit in $\frak o_{ v}^\times$.
\end{lem}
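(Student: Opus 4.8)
The plan is to reduce the general quadratic form to a standard shape over $\frak o_v$, then solve the equation essentially by hand using Hensel's lemma. First I would use the hypothesis that $v$ is non-dyadic and that $\disc(q)$ is a $v$-adic unit to diagonalize $q$ over $\frak o_v$: by a standard argument (completing the square is harmless since $2 \in \frak o_v^\times$), one may assume $q(x_1,\dots,x_n) = a_1 x_1^2 + \dots + a_n x_n^2$ with all $a_i \in \frak o_v^\times$ (the unit-determinant hypothesis forces each coefficient in a suitable diagonalization to be a unit, after absorbing squares; one must be slightly careful that diagonalization over $\frak o_v$ rather than $F_v$ is available, but for non-dyadic $v$ the reduction of $q$ mod $\pi_v$ is a nondegenerate form over the residue field and one lifts an orthogonal basis). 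The point of this normalization is that the reduction $\bar q$ over the residue field $\kappa(v)$ is a nondegenerate quadratic form in $n \geq 3$ variables.

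Next I would solve the congruence modulo $\pi_v$. A nondegenerate quadratic form in $n \geq 3$ variables over a finite field of odd characteristic represents every element, including $0$ nontrivially; more precisely $\bar q(x_1,\dots,x_n) = \bar d$ has a solution over $\kappa(v)$, and since $n\geq 3$ one can find such a solution that is not the zero vector, i.e. with at least one coordinate $\bar\alpha_i \neq 0$. (If $\bar d \neq 0$ this is automatic; if $\bar d = 0$ one uses that a nondegenerate form in $\geq 3$ variables over a finite field is isotropic.) So choose $(\bar\alpha_1,\dots,\bar\alpha_n) \in \kappa(v)^n$ with $\bar q(\bar\alpha) = \bar d$ and, say, $\bar\alpha_1 \neq 0$.

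Finally I would lift this to $\frak o_v$ by Hensel's lemma. Fix lifts $\alpha_2,\dots,\alpha_n \in \frak o_v$ of $\bar\alpha_2,\dots,\bar\alpha_n$ and consider the one-variable polynomial $g(x_1) = a_1 x_1^2 + (a_2\alpha_2^2 + \dots + a_n\alpha_n^2) - d \in \frak o_v[x_1]$. We have $g(\bar\alpha_1) \equiv 0 \pmod{\pi_v}$ and $g'(\bar\alpha_1) = 2a_1\bar\alpha_1 \neq 0$ in $\kappa(v)$ because $2, a_1 \in \frak o_v^\times$ and $\bar\alpha_1 \neq 0$. Hence Hensel's lemma produces $\alpha_1 \in \frak o_v$ with $g(\alpha_1) = 0$ and $\alpha_1 \equiv \bar\alpha_1$, so $\alpha_1 \in \frak o_v^\times$. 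Then $(\alpha_1,\dots,\alpha_n)$ solves $q = d$ over $\frak o_v$ with $\alpha_1$ a unit.

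The only real obstacle is the first step: making sure the normalization of $q$ genuinely holds over $\frak o_v$ (not merely over $F_v$) with all diagonal entries units — this is where the non-dyadic and unit-discriminant hypotheses are both essential, and it is the standard theory of quadratic forms over a complete discrete valuation ring with odd residue characteristic. Everything after that is elementary: counting points on a conic/quadric over a finite field plus one application of Hensel. If one prefers to avoid the explicit diagonalization, one can instead argue directly that the scheme $\{q = d\}$ over $\frak o_v$ has smooth reduction away from the origin and apply a smoothness-based Hensel argument to a $\kappa(v)$-point with a unit coordinate, but the diagonal approach is the most transparent.
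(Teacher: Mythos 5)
Your proposal is correct and is simply a fully written-out version of the paper's own proof, which consists of the single sentence ``This follows from Hensel's lemma'': the diagonalization over $\frak o_{v}$ (valid since $v$ is non-dyadic and $\det(q)$ is a unit), the solution of the reduced equation over the residue field using that a nondegenerate form in $n\geq 3$ variables over a finite field represents every element nontrivially, and the one-variable Hensel lift at a unit coordinate are exactly the intended details. No discrepancy to report.
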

\begin{proof} This follows from  Hensel's lemma. \end{proof}

 \begin{lem}\label{prex}
Let $ v$  be a non-dyadic valuation of a number field $F$.  Let $q(x,y,z)$   be a  quadratic form defined over $\frak o_v$  with $v(det(q))=0$. 
Let $a \in \frak o_v$, $a \neq 0$.  Let ${\bf Y}$ be the $\frak  o_v$-scheme defined by the   equation
$$q(x,y,z)=a.$$ Let  $Y$ be the generic fibre of ${\bf Y}$ over $F_v$. Assume $ -a.det(q) \notin  F_v^{\times 2}$.
Let $${\bf Y}^\ast (\frak o_v)= \{  (x_v, y_v, z_v )\in {\bf Y}(\frak o_v): \ \text{one of} \ \ x_v, y_v, z_v  \in \frak o_v^\times  \}. $$
An element which represents the non-trivial element of $\br(Y)/\br(F_v)$
takes two values over ${\bf Y}^\ast (\frak o_v)$ if and only if
$v(a)$ is odd.
\end{lem}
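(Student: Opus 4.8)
First I would normalise the problem. Since $\det(q)\in\frak o_v^\times$ and $v$ is non-dyadic, the reduction of $q$ modulo $v$ is a non-degenerate ternary form over the finite field $\kappa(v)$, hence isotropic; by Hensel's lemma $q$ represents $0$ primitively over $\frak o_v$, and a unimodular isotropic ternary $\frak o_v$-lattice splits off a hyperbolic plane. Thus, after an $\frak o_v$-linear change of the coordinates $x,y,z$ — an operation that permutes primitive integral vectors and so preserves ${\bf Y}^\ast(\frak o_v)$ — one may assume $q(x,y,z)=xy-\det(q)z^2$. Put $d=-a\det(q)$, so that $v(d)=v(a)$; as $d\notin F_v^{\times 2}$, Lemma~\ref{good} gives $Y(F_v)\neq\emptyset$ and Proposition~\ref{computbr}(ii) tells us that the quaternion class $\C{A}:=(x,d)$ belongs to $\br(Y)$ and generates $\br(Y)/\br(F_v)=\Z/2$. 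Being a quaternion algebra, $\C{A}$ is $2$-torsion, so its values on ${\bf Y}^\ast(\frak o_v)\subset Y(F_v)$ lie in ${}_2\br(F_v)=\tfrac12\Z/\Z$; another representative of the non-trivial class of $\br(Y)/\br(F_v)$ differs from $\C{A}$ by a constant and so shifts all these values by the same amount, whence ``$\C{A}$ takes two values on ${\bf Y}^\ast(\frak o_v)$'' is well defined and means that the evaluation map is non-constant. I would also record that $\{x=0\}\cap Y$ is an affine line over $F_v(\sqrt d)$ (for $x=0$ forces $z^2=-a/\det(q)\equiv d$ modulo squares), hence has no $F_v$-point; so $x$ is invertible on all of $Y(F_v)$, and the value of $\C{A}$ at $(x_0,y_0,z_0)\in Y(F_v)$ is the Hilbert symbol $(x_0,d)_v$.

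For the ``only if'' direction I would use the integral model ${\bf Y}=\{xy-\det(q)z^2=a\}\subset\A^3_{\frak o_v}$. A short Jacobian computation shows that ${\bf Y}$ is regular except, when $v(a)\geq 2$, at the single point $P_0$ of its special fibre with $x=y=z=0$; set ${\bf Y}^\circ={\bf Y}\setminus\{P_0\}$, a regular $\frak o_v$-scheme, and note that an $\frak o_v$-point of ${\bf Y}$ avoids $P_0$ precisely when one of its coordinates is a unit, so ${\bf Y}^\ast(\frak o_v)={\bf Y}^\circ(\frak o_v)$. Suppose now $v(a)$ is even; then $d\equiv d_0\bmod F_v^{\times 2}$ for some unit $d_0\in\frak o_v^\times$, so $\C{A}=(x,d_0)$, and I claim $\C{A}\in\br({\bf Y}^\circ)$. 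By purity on the regular scheme ${\bf Y}^\circ$ it is enough to check that $\C{A}$ is unramified at each codimension-one point; these are exactly the divisors of the generic fibre $Y$ — along which $\C{A}$ is unramified since $\C{A}|_Y\in\br(Y)$ by Proposition~\ref{computbr}(ii) — together with the generic point of the (reduced, irreducible) special fibre, at which both $x$ and $d_0$ are units, so that the residue of $(x,d_0)$ vanishes there. Hence $\C{A}\in\br({\bf Y}^\circ)$, and evaluating at any $\frak o_v$-point lands in $\br(\frak o_v)=0$; thus $\C{A}$ takes the single value $0$ on ${\bf Y}^\ast(\frak o_v)$.

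For the ``if'' direction, suppose $v(a)$ is odd; then $d\equiv\pi d_0\bmod F_v^{\times 2}$ for a unit $d_0\in\frak o_v^\times$, and for any unit $x_0\in\frak o_v^\times$ the point $P_{x_0}=(x_0,\,ax_0^{-1},\,0)$ lies in ${\bf Y}^\ast(\frak o_v)$, with
$$\C{A}(P_{x_0})=(x_0,d)_v=(x_0,\pi d_0)_v=(x_0,\pi)_v$$
(the last equality because $x_0$ and $d_0$ are units and $v$ is non-dyadic). Since $(x_0,\pi)_v$ vanishes for $x_0$ a square unit and equals $\tfrac12$ for $x_0$ a non-square unit, the choices $x_0=1$ and $x_0$ a non-square unit exhibit two values, and combining the two cases yields the asserted equivalence. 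I expect the main obstacle to be the middle step: pinning down the regular model ${\bf Y}^\circ$ and verifying that, when $v(a)$ is even, the class $(x,d)$ extends to it — the decisive point being that the only divisor where $(x,d)$ could ramify is $\{x=0\}$, over which $d$ becomes a square.
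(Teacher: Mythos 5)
Your proof is correct, and for half of the statement it follows a genuinely different route from the paper's. The setup (reduction to $q=xy-\det(q)z^2$ by splitting off a hyperbolic plane, the generator $(x,d)$ from Proposition \ref{computbr}(ii), and the remark that ``takes two values'' is independent of the chosen representative) and the odd case (evaluating at the points $(x_0,ax_0^{-1},0)$ for $x_0$ a square, resp.\ non-square, unit) coincide with the paper's argument. The divergence is in the even case. The paper argues directly on coordinates: it shows that every point of ${\bf Y}^\ast(\frak o_v)$ has $v(x_v)$ even --- if $v(x_v)$ were odd, a short case analysis ($z_v$ a unit, or else $y_v$ a unit) combined with Hensel's lemma would force $-a\det(q)\in F_v^{\times 2}$, contrary to hypothesis --- and then $(x_v,d)_v=0$ as a symbol of two elements of even valuation at a non-dyadic place. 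You instead construct the regular model ${\bf Y}^\circ={\bf Y}\setminus\{P_0\}$, identify ${\bf Y}^\ast(\frak o_v)$ with ${\bf Y}^\circ(\frak o_v)$, and use purity on this three-dimensional regular integral scheme (checking the residue along the unique vertical divisor, where $x$ and $d_0$ are units) to show that $(x,d_0)$ extends to $\br({\bf Y}^\circ)$, so that evaluation at integral points factors through $\br(\frak o_v)=0$. Both arguments are valid: the paper's is more elementary and self-contained, while yours is more conceptual --- it explains the vanishing by the existence of an integral Brauer class on the regular locus, in the spirit of the purity arguments the paper itself uses in \S\ref{computBrauer} --- at the cost of invoking purity and verifying regularity and irreducibility of the special fibre. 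Your side observation that $x$ never vanishes on $Y(F_v)$ (since $x=0$ would force $d$ to be a square) is correct and justifies reading the value of the class at every point as the Hilbert symbol $(x_0,d)_v$.
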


\begin{proof} 
After an invertible $\frak o_v$-linear change of coordinates, one may write
  $$q(x,y,z)=xy-\det(q)z^2$$ over $\frak  o_v$. One then has 
$${\bf Y}^\ast (\frak o_v)= \{  (x_v, y_v, z_v )\in {\bf Y}(\frak o_v): \ \text{one of} \ \ x_v, y_v, z_v  \in \frak o_v^\times  \}.$$ 
By Proposition \ref{computbr},
one has $$\br(Y)/\br(F_v) \simeq \Z/2 \ \ \ \ \text{for} \ \
-a .\det(q)\notin F_v^{\times^2}$$ and the generator is given
by the class of the  quaternion algebra  $$(x,-a.det(q))\in
\br(F_v (Y)).$$

If $v(a)=v(- a .\det(q))$ is odd, one can choose $(x_v,y_v,0) \in {\bf Y}^\ast(\frak  o_v)$
where $x_v$ is a square resp. a non-square unit in $\frak o_v^\times$. On these points, $(x,-a. \det(q))$ takes
the value $0$, resp. the value $1/2$.

If $v(a)=v(- a .\det(q))$ is even,  we claim that any solution
$(x_v,y_z,z_v) \in {\bf Y}^\ast (\frak  o_v)$ satisfies that $ v(x_v)$ is even. Indeed, suppose there exists $(x_v, y_v, z_v)\in {\bf Y}^\ast (\frak  o_v)$ such that $v(x_v)$ is odd.
Then $y_v$ or $z_v$ is in $\frak o_v^\times$. If  we have $z_v\in \frak o_v^\times$, then by Hensel's lemma $-a.det(q) \in F_v^{\times^2}$,
which is excluded. We thus have $z_v\notin \frak o_v^\times$ and  $y_v \in \frak o_v^\times$. This implies $v(x_v y_v)$ is odd. Therefore
$$v(-det(q) .z_v^2)= v(a) < v(x_v y_v) $$ and $-a.det(q) \in F_v^{\times^2}$ by Hensel's lemma.
 A contradiction is derived and the claim follows. By the claim,
the algebra $(x,-a.det(q))  $  vanishes on ${\bf Y}^\ast (\frak o_v)$.
\end{proof}

\begin{lem} \label{infinit} Let  $k=F_{ v}$ be a completion of the number field $F$.
  Let $q(x,y,z)$ be a nondegenerate quadratic form over $k$
and let $a \in k^{\times}$.
Let $Y $ be the affine  $k$-scheme defined by the  equation
$$q(x,y,z)=a.$$
Assume $-a.\det(q) \notin
k^{\times 2}.$ Assume $Y$ has   a  $k$-point.
One has $\br(Y)/\br(k) \simeq \Z/2$.
Let $\xi$ be an element of $\br(Y)$ with nonzero image in
  $\br(Y)/\br(k)$. Then $\xi$ takes a single value over
$Y(k)$ if and only if $ v$ is a real place and $q  
$ is anisotropic over $F_ v$.
\end{lem}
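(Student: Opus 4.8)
The plan is to treat three cases; only one of them, the anisotropic finite‑place case, requires a real idea.

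\emph{Set‑up and the ``if'' direction.} Since $d=-a\det(q)\notin k^{\times 2}$, Proposition~\ref{computbr} already gives $\br(Y)/\br(k)\simeq\Z/2$; and whether ``$\xi$ takes a single value on $Y(k)$'' is unchanged when $\xi$ is replaced by $\xi+c$ with $c\in\br(k)$, so I may assume $\xi$ generates $\br(Y)/\br(k)$. Suppose $v$ is real and $q$ is anisotropic over $F_v=\mathbb R$. Then $q$ is definite, and after replacing $(q,a)$ by $(-q,-a)$ if necessary — which changes neither $Y$ nor $d$ — I may assume $q$ positive definite; as $Y(\mathbb R)\neq\emptyset$ this forces $a>0$, so $Y(\mathbb R)$ is an ellipsoid, in particular connected. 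The evaluation map $Y(\mathbb R)\to\br(\mathbb R)$ attached to $\xi$ is continuous with finite target, hence constant, so $\xi$ takes a single value.

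\emph{The isotropic case.} For the converse I assume we are \emph{not} in the situation above, so $v$ is finite or $q$ is isotropic over $F_v$, and I must show $\xi$ takes two values. Suppose first $q$ is isotropic over $F_v$. Then $q$ is $F_v$‑isometric to $xy-\det(q)z^2$, and Proposition~\ref{computbr}(ii) identifies $\xi$ with $(x,d)$ modulo $\br(F_v)$. For every $b\in F_v^\times$ the points $(1,a,0)$ and $(b,a/b,0)$ lie on $Y$, and the values of $\xi$ there differ by the Hilbert symbol $(b,d)_v\in\br(F_v)$; since $d\notin F_v^{\times 2}$ the map $(\,\cdot\,,d)_v\colon F_v^\times\to\tfrac12\Z/\Z$ is surjective, so a suitable $b$ makes this nonzero.

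\emph{The anisotropic finite‑place case.} There remains the case where $q$ is anisotropic over $F_v$, which forces $v$ finite. Let $B$ be the polar form of $q$, fix $P\in Y(F_v)$, and view $P$ as an $F_v$‑point of the projective quadric $Q\colon q-au^2=0$. Applying Proposition~\ref{computbr}(i) to the tangent plane at $P$, of affine equation $\ell_P(x)=B(P,x)-a$, gives $\xi-(\ell_P,d)\in\br(F_v)$. Since the discriminant of the quaternary form $q-au^2$ equals $d$ modulo squares, the two rulings of $Q$ are conjugate over $F_v(\sqrt d)$, hence so are the two lines of $Q$ through $P$; an $F_v$‑point of their union is Galois‑stable, so lies on both lines, i.e.\ equals $P$. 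Thus $\ell_P$ is nowhere zero on $Y(F_v)\setminus\{P\}$, and for any $M,M'$ in that set
$$\xi(M)-\xi(M')=\bigl(\ell_P(M)/\ell_P(M'),\,d\bigr)\in\br(F_v).$$
Hence it suffices to show that the linear form $M\mapsto B(M,P)$ (equivalently $\ell_P$) takes values in both cosets of $N_{F_v(\sqrt d)/F_v}F_v(\sqrt d)^\times$ on $Y(F_v)\setminus\{P\}$; and for this it is enough that $B(\,\cdot\,,P)$ omit only finitely many values of $F_v$ on $Y(F_v)$, each norm coset being infinite. Now $q(P)\neq 0$ makes $q|_{P^\perp}$ a nondegenerate binary form, anisotropic since $q$ is; completing the square shows that the plane section $Y\cap\{B(\,\cdot\,,P)=c\}$ is a nondegenerate conic for all but at most two values of $c$, while its conic at infinity $\{q|_{P^\perp}=0\}$ has no $F_v$‑point. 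As a smooth plane conic over the $p$‑adic field $F_v$ has a rational point, that point is affine, so $B(\,\cdot\,,P)$ attains every such $c$.

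\emph{Main obstacle.} The only delicate point is this last case: producing enough $F_v$‑points of $Y$ on which $\ell_P$ realizes two distinct norm classes from $F_v(\sqrt d)$. I obtain them by slicing $Y$ with the pencil of affine planes $B(\,\cdot\,,P)=c$ and invoking isotropy of ternary quadratic forms over $p$‑adic fields, the anisotropy of $q$ guaranteeing that the conic points so produced lie in the affine part. The other two cases are essentially formal.
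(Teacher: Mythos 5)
Your first two cases are fine: the connectedness argument for a definite real form, and the explicit points $(1,a,0)$, $(b,a/b,0)$ on the hyperbolic model in the isotropic case, both work. The gap is in the case you yourself flag as the delicate one ($v$ finite, $q$ anisotropic), and it lies in the sentence ``a smooth plane conic over the $p$-adic field $F_v$ has a rational point.'' This is false: anisotropic ternary forms exist over every $p$-adic field (e.g.\ $x^2-uy^2-pz^2$ with $u$ a non-square unit), and the corresponding smooth projective conics have no rational point. Concretely, write $q=\langle a\rangle\perp q_0$ with $q_0=q|_{P^{\perp}}$, so that your section $Y\cap\{B(\cdot,P)=c\}$ is the affine conic $q_0(y)=e_c$ with $e_c=a-c^2/4a$, whose projective closure $q_0(y)-e_cz^2=0$ has an $F_v$-point if and only if the anisotropic binary form $q_0$ represents $e_c$. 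An anisotropic binary form over a local field represents exactly one coset of the index-two norm subgroup $N_{K/F_v}K^{\times}$ (and here $K=F_v(\sqrt d)$, since $-\det q_0\equiv -a\det q=d$ modulo squares), i.e.\ only two of the four square classes; and as $c$ varies, $e_c=a(1-(c/2a)^2)$ visits all four square classes infinitely often. So $B(\cdot,P)$ omits \emph{infinitely} many values of $c$, the premise ``omits only finitely many values'' fails, and the deduction that $\ell_P$ meets both norm cosets collapses. The conclusion is still true, but proving it along your lines forces you to keep the constraint $a(1-t^2)\in D(q_0)$ (with $t=B(x,P)/2a$) and analyse which class of $t-1$ modulo $N_{K/F_v}K^{\times}$ is compatible with it — i.e.\ exactly the computation you discarded. (Your tangent-plane set-up, the conjugate-rulings argument showing $\ell_P$ vanishes only at $P$, and the reduction to a statement about norm cosets are all correct; only the final surjectivity step is unsupported.)

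For comparison, the paper sidesteps this entirely: it quotes from \cite{CTX} (p.~331) that the image of the evaluation map $Y(k)\to\br(k)$ equals the image of the spinor norm $SO(V)(k)\to k^{\times}/k^{\times2}\to k^{\times}/N_{K/k}(K^{\times})$, and then invokes O'Meara 91:6, which asserts that the spinor norm of a nondegenerate form of rank $\geq 3$ over a nonarchimedean local field is all of $k^{\times}/k^{\times 2}$ — anisotropic or not. That surjectivity is precisely what your slicing argument was meant to supply and does not.
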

\begin{proof}
By Proposition \ref{computbr},  
 one has
 $$\br(Y)/\br(k) \simeq \Z/2.$$ 
 Let $V$ be the quadratic space defined by $q(x,y,z)$ over $k$. 
 By \cite[p. 331]{CTX}, if we choose a lift $\xi \in \br(Y)$ of order 2,
 the image of the evaluation map
 $$Y(k) \to \br(k)$$
 coincides with the image of the composite map
 $$SO(V)(k)  \to k^{\times}/k^{\times 2} \to k^{\times}/N_{K/k}(K^{\times}),$$
 where $\theta: SO(V)(k)  \to k^{\times}/k^{\times 2}$ is the spinor map,
 $K=k(\sqrt{-a.det(q)})$ and $k^{\times}/k^{\times 2} \to k^{\times}/N_{K/k}(K^{\times})$
 is the natural projection, which is onto,  and   is
 by assumption an isomorphism if $k=\R$.
 For $k$ a nonarchimedean local field, the spinor map is surjective   \cite[91: 6]{OM}.
 For $k=\R$ the reals, the spinor map has trivial image in $\R^\times/\R^{\times 2} \simeq {\pm 1}$
 if and only if the quadratic form $q$ is anisotropic.  
 \end{proof}

The following proposition does not appear formally in \S 6 of \cite{CTX},
where attention is restricted to schemes over the whole ring of integers.
It follows however easily from  Thm. 3.7  and  \S 5.6 and \S 5.8 of \cite{CTX}.
\begin{prop}\label{deCTX}
Let $F$ be a number field. Let $Y/F$ be a smooth  affine quadric defined by an equation
$$q(x,y,z)=a.$$   Assume $Y(F)\neq \emptyset$.
Let $S$ be a   
finite set of places of $F$. Assume there exists $v_{0} \in S$
such that $q$ is isotropic over $F_{v_{0}}$.
Then  strong approximation  with Brauer-Manin obstruction off $S$ holds for $Y$.
Namely, the closure of the image of $Y(F)$ under the diagonal map
$Y(F) \to Y(\A_{F}^S)$ coincides with the image of 
$Y(\A_{F})^{\br(Y)} \subset Y(\A_{F})$ under the projection map $Y(\A_{F}) \to Y(\A_{F}^S)$.\qed
\end{prop}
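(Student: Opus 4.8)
The plan is to deduce Proposition~\ref{deCTX} from the machinery of \cite{CTX} by carefully isolating exactly which statements there are needed and checking that the passage from schemes over the full ring of integers to smooth affine quadrics over $F$ (with the freedom to choose an integral model only after throwing away the places of $S$) does not cost anything. First I would recall from \cite[Thm.~3.7]{CTX} the description of the adelic closure of $Y(F)$ for a smooth affine quadric $q(x,y,z)=a$: when $q$ is isotropic at some place $v_0$, the closure of $Y(F)$ in $Y(\A_F^S)$ is cut out precisely by the finitely many Brauer classes that generate $\br(Y)/\br(F)$, which by Proposition~\ref{computbr} is either $0$ or $\Z/2$. The point is that \cite[Thm.~3.7]{CTX} is stated in terms of a specific integral model over $\mathfrak o_F$, so the first real task is to re-express it as a statement about $Y(\A_F)^{\br(Y)}$ and its image in $Y(\A_F^S)$, independent of any choice of model.

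The key steps, in order, are as follows. \textbf{Step 1:} Note that $Y$ is a homogeneous space of $\SO(q)$ with stabilizer $\SO(q')$ in two variables (or, in the split quaternary reformulation $q(x,y,z)=xy-\det(q)z^2$, use the explicit generator $(x,d)$ from Proposition~\ref{computbr}(ii)). \textbf{Step 2:} Invoke \cite[Thm.~3.7(b)]{CTX} together with \cite[Thm.~6.1]{CTX} — these give strong approximation with Brauer-Manin obstruction for such homogeneous spaces off a set $S$ containing a place where the group (equivalently $q$) is isotropic; the isotropy hypothesis is exactly what makes $\prod_{v\in S}\SO(q)(F_v)$ noncompact, which is the noncompactness needed for the Eichler--Kneser--Platonov-type statement. \textbf{Step 3:} Reconcile the ``whole ring of integers'' formulation of \cite{CTX} with the present one: given an adelic point in $Y(\A_F)^{\br(Y)}$, spread $Y$ out to a smooth $\mathfrak o_T$-scheme for a suitable finite $T\supset S$, use Lemma~\ref{good} (Hensel at the good non-dyadic places) to see that the local conditions at $v\notin T$ are automatically satisfied, and then apply the cited results. \textbf{Step 4:} Finally, combine with Proposition~\ref{computbr}'s surjectivity statement to check that the Brauer set computed with $\br(Y)$ is the same whether one works over $F$ or over an extension, so no Brauer class is lost in the model-theoretic bookkeeping.

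The main obstacle I expect is \textbf{Step 3}: the results of \S 6 of \cite{CTX} are phrased for a fixed $\mathcal Y/\mathfrak o_F$, and one must verify that allowing an arbitrary integral model over $\mathfrak o_T$ — which is what the definition of strong approximation off $S$ really requires, since the places in $S$ and the finitely many bad places in $T\setminus S$ are handled by open conditions rather than integrality — does not introduce extra local obstructions at the finitely many ``bad'' non-dyadic or dyadic places. This is precisely the kind of bookkeeping carried out in the proof of Proposition~\ref{strongBMUX} above (Harari's formal lemma, enlarging $T$, the Lang--Weil/Hensel argument for $v\notin T$), and indeed the cleanest route is to observe that $Y$ here plays the role of ``$U$'' there, so that once the bare statement of \cite[Thm.~3.7, Thm.~6.1]{CTX} is in hand for one model, Proposition~\ref{strongBMUX}-style arguments propagate it to the model-independent statement. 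Everything else — the computation of $\br(Y)/\br(F)$, the isotropy-at-$v_0$ hypothesis — is already packaged in Proposition~\ref{computbr} and in the hypotheses, so the proof should reduce to a careful citation plus this reconciliation, which is why the proposition is asserted with a \qed rather than a full argument.
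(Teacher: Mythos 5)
Your proposal follows essentially the same route as the paper, which itself gives no detailed argument but only the remark that the statement "follows easily from Thm.~3.7 and \S 5.6 and \S 5.8 of \cite{CTX}" once one reconciles the whole-ring-of-integers formulation there with the model-independent statement off $S$ here; your Steps 1--4, including the identification of the integral-model bookkeeping as the only real issue and its resolution via the spreading-out arguments of the kind used in Proposition~\ref{strongBMUX}, are exactly the intended expansion of that citation. No discrepancy to report.
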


\section{Computation of Brauer groups for the equation $q(x,y,z)=p(t)$}\label{computBrauer}

Let $k$ be a   field of characteristic zero,  $q(x,y,z)$ a nondegenerate quadratic form in 3 variables over $k$
and $p(t) \in k[t]$ a nonzero polynomial.

Let $X$ be the affine variety defined by the equation
\begin{equation} \label{equ} q(x,y,z)=p(t). \end{equation} 

The singular points of $X_{\ov k}$ are 
the points $(0,0,0,t)$ with $t$ a multiple root of $p$ (Lemma \ref{singularfibres}).
Let $U   \subset X_{smooth }$ be the  the complement of the closed set of $X$
defined by $x=y=z=0$.

 Let $\pi: \tilde{X} \to X$ a desingularization of $X$, i.e. $\tilde X$ is smooth
and integral,
the $k$-morphism $\pi $ is proper and birational. We moreover  assume that 
$\pi: \pi^{-1}(X_{smooth}) \to X_{smooth}$ is an isomorphism. In particular
$\pi: \pi^{-1}(U) \to U$ is an isomorphism.

Write  $p(t)=c.p_1(t)^{e_1}\dots p_s(t)^{e_s} $,
with  $c$ is in $k^\times$
and the  $p_i(t)$, $1\leq i\leq s,$ distinct monic irreducible   polynomials
over $k$. Let $k_{i}=k[t]/(p_{i}(t))$ for $1\leq i\leq  s$.

Let $K=\ov k(t)$ where $\ov k$ is an algebraic closure of $k$. The polynomial $p(t)$ is
a square in $K$ if and only if all the $e_{i}$ are even.

In this section we compute the Brauer groups of  $U$  and the Brauer group of the desingularization  $\tilde X$ of $X$. By purity for the Brauer group,  we have $\br(X_{smooth}) \oi \br(U$), and the
group $\br(\tilde{X})$ does not depend on the choice of  the resolution of singularities $\tilde X \to X$.

\begin{prop}\label{bg}
 Let  $p(t)=c.p_1(t)^{e_1}\dots p_s(t)^{e_s} $, $q(x,y,z)$ and
  $U$   be as above. 
If $p(t)$ is not a square in $K=\ov k(t)$, i.e. if not all $e_i$ are even, the natural map $\br (k) \to \br (U)$ is an isomorphism.
\end{prop}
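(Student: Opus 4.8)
The plan is to compute $\br(U)$ by passing to the algebraic closure and using the Hochschild–Serre spectral sequence, in the spirit of the computations of \cite{CTX}. First I would identify the geometry of $U_{\ov k}$: it is the complement in the smooth affine quadric bundle $q(x,y,z)=p(t)$ of the ``zero section'' $x=y=z=0$. Over $\ov k$, after diagonalizing $q$, the variety $U_{\ov k}$ fibers over the open set $W_{\ov k} \subset {\bf A}^1_{\ov k}$ where $p(t)\neq 0$, with fibers smooth affine quadrics with a rational point; but it is cleaner to observe that $U_{\ov k}$ is an open subset of the quadric bundle inside ${\bf P}(1,1,1,0)\times{\bf A}^1$ or, better, to exhibit $U_{\ov k}$ directly as (an open subset of) a rational variety whose relevant invariants one can read off. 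The two invariants I need are $\overline{k}[U]^\times/\overline{k}^\times$ (units modulo constants) and $\Pic(U_{\ov k})$, together with the vanishing $\br(U_{\ov k})=0$ (which holds since $U_{\ov k}$ is a smooth rational variety, being birational to affine space, and open in a smooth projective rational variety whose geometric Brauer group vanishes — one reduces this to the fact that for a smooth compactification the Brauer group is a birational invariant that vanishes for rational varieties in characteristic zero).

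The key point is the unit computation: I claim that when $p(t)$ is not a square in $K=\ov k(t)$, i.e. not all $e_i$ are even, the only invertible functions on $U_{\ov k}$ are constants, so $\overline{k}[U]^\times=\overline{k}^\times$. Indeed on $U_{\ov k}$ the equation reads $q(x,y,z)=p(t)=c\prod p_i(t)^{e_i}$; an invertible function would, by factoriality considerations on the affine quadric bundle, have divisor supported on the fibers $t=\alpha$ over roots $\alpha$ of $p$. But the fiber of $q(x,y,z)=\lambda$ over a nonzero $\lambda\in\ov k$ is a smooth affine quadric surface, hence irreducible with trivial Picard group and only constant units; the relation shows that in the local ring along $t=\alpha$ the function $t-\alpha$ has the same divisor (up to a unit) as does $q(x,y,z)$ restricted suitably, and chasing the exponents $e_i$ one sees that the subgroup of principal divisors coming from units is exactly the image of the $t-\alpha_i$ under the relation $\sum e_i[\text{fiber}] = \operatorname{div}(q)$ — so a unit exists only when one can solve for an everywhere-invertible combination, which forces all $e_i$ even, i.e. $p$ a square in $K$. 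Simultaneously the same divisor-sequence computation gives $\Pic(U_{\ov k})$: from the localization sequence $\bigoplus_{\alpha}\Z \to \Pic(Q_{\ov k}\text{-bundle}) \to \Pic(U_{\ov k}) \to 0$ one gets that $\Pic(U_{\ov k})$ is finite (in fact trivial or $\Z/2$ depending on parity), and crucially it is \emph{finitely generated with no relevant Galois cohomology in degree needed} — but for this proposition I only need that it is torsion-free or vanishes so that $H^1(k,\Pic(U_{\ov k}))$ and $H^1(k,\overline{k}[U]^\times)$ both vanish.

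Granting $\overline{k}[U]^\times=\overline{k}^\times$ and $\br(U_{\ov k})=0$, I run the Hochschild–Serre spectral sequence $H^p(k, H^q_{\et}(U_{\ov k},\G_m)) \Rightarrow H^{p+q}_{\et}(U,\G_m)$. The low-degree exact sequence gives
\[
0 \to H^1(k,\overline{k}[U]^\times) \to \Pic(U) \to \Pic(U_{\ov k})^{\Gal} \to H^2(k,\overline{k}[U]^\times) \to \ker\big(\br(U)\to\br(U_{\ov k})\big) \to H^1(k,\Pic(U_{\ov k})).
\]
Since $\overline{k}[U]^\times=\overline{k}^\times$, we have $H^1(k,\overline{k}[U]^\times)=0$ and $H^2(k,\overline{k}[U]^\times)=\br(k)$, and since $\br(U_{\ov k})=0$ the map $\br(U)\to\br(U_{\ov k})$ is zero so $\ker(\br(U)\to\br(U_{\ov k}))=\br(U)$. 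Thus I obtain an exact sequence $\br(k)\to \br(U)\to H^1(k,\Pic(U_{\ov k}))$, and the composite $\br(k)\to\br(U)$ is split by evaluation at a rational point of $U$ (which exists, or one argues after a field extension using the last sentence of Proposition \ref{computbr}-type surjectivity) — actually cleaner: $U\to\Spec k$ has a section over a dense open, but to be safe I note $U(k)$ need not be nonempty, so instead I use that $\br(k)\to\br(U)$ is injective because $U$ is geometrically integral with a $\ov k$-point, hence $k$ is algebraically closed in $k(U)$ and $\br(k)\hookrightarrow\br(k(U))$ factors through $\br(U)$. So $\br(U)/\br(k)\hookrightarrow H^1(k,\Pic(U_{\ov k}))$.

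The main obstacle is therefore showing $H^1(k,\Pic(U_{\ov k}))=0$ under the hypothesis that not all $e_i$ are even. I expect that in this case $\Pic(U_{\ov k})=0$ outright: the zero section $x=y=z=0$ meets the quadric bundle in a locus whose removal kills exactly the class that would generate $\Pic$, and the parity obstruction to that class being trivial is precisely whether $p$ is a square. Concretely, writing $q=xy-dz^2$ over $\ov k$ (as in Proposition \ref{computbr}(ii)), the divisor $x=0$ on the smooth locus restricts on each good fiber to $2$ times the point at infinity minus the fiber, and globally $\operatorname{div}(x) = D - \sum e_i F_i$ type relations show the class group of $U_{\ov k}$ is generated by the fiber classes $F_i$ modulo the relation coming from $\operatorname{div}(p(t))$, which collapses to zero exactly when the gcd-type condition fails, i.e. when some $e_i$ is odd. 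So the hard work is this explicit Picard-group computation of $U_{\ov k}$; once it yields $\Pic(U_{\ov k})=0$, the spectral sequence argument above immediately gives $\br(k)\xrightarrow{\sim}\br(U)$.
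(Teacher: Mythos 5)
Your overall architecture --- Hochschild--Serre with the three geometric inputs $\ov k[U]^\times=\ov k^\times$, $\Pic(U_{\ov k})=0$ and $\br(U_{\ov k})=0$ --- is exactly the paper's, and the spectral-sequence endgame is fine. But two of the three inputs are not actually established, and one of them rests on a false principle. You justify $\br(U_{\ov k})=0$ by saying that $U_{\ov k}$ is a smooth rational variety, open in a smooth projective rational variety with trivial Brauer group. The Brauer group of an \emph{open} variety is not a birational invariant and is not controlled by rationality of a compactification: $\G_m\times\G_m$ is open in $\P^1\times\P^1$ over $\ov k$ and has nontrivial Brauer group. One really needs the geometry of the boundary. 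The paper takes the projective closure $Z: q(x,y,z)=p(t)u^2$ in $\P^3\times\A^1$, sets $V=Z_{smooth}$, and uses the localization sequence $0\to\br(V_{\ov k})\to\br(U_{\ov k})\to H^1_{\et}(D_{\ov k},\Q/\Z)$, where the boundary $D=C\times\A^1$ (with $C$ the conic $q=0$) satisfies $H^1_{\et}(D_{\ov k},\Q/\Z)=0$ because $D_{\ov k}\simeq\P^1_{\ov k}\times\A^1_{\ov k}$; together with $\br(V_{\ov k})\hookrightarrow\br(V_K)=0$ (Tsen plus surjectivity of $\br(K)\to\br(V_K)$ for a smooth projective quadric) this gives the vanishing. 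Nothing in your sketch replaces this step.

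Second, the computation $\Pic(U_{\ov k})=0$, which you correctly identify as the crux and as the only place where the hypothesis ``not all $e_i$ even'' enters, is only gestured at, and the surrounding claims are off: $\Pic(U_{\ov k})$ is not ``finite, trivial or $\Z/2$'' in general (it is $\Z$ when all $e_i$ are even, cf.\ Proposition \ref{bexce}), and ``torsion-free'' would not give $H^1(k,\Pic(U_{\ov k}))=0$ anyway, since $\Z$ with the sign action has $H^1=\Z/2$ --- which is precisely what happens in the square case. The paper's route is short: restriction to the generic fibre gives $\Pic(U_{\ov k})\simeq\Pic(U_K)$, and $\Pic(U_K)=0$ because the quadric $V_K$ over $K=\ov k(t)$ has non-square discriminant (this is exactly where the parity hypothesis is used), so $\Pic(V_K)=\Z$ is generated by the hyperplane section $u=0$, whose class dies on its complement $U_K$. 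A smaller but symptomatic point: injectivity of $\br(k)\to\br(U)$ does not follow from ``$k$ is algebraically closed in $k(U)$'' (the function field of a pointless conic kills the corresponding quaternion class); it follows from your own exact sequence once $\Pic(U_{\ov k})^{\Gal}=0$, so no separate argument is needed there.
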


\begin{proof}
Let $Z$ be the closed subscheme of ${\bf P}_k^3 \times {\bf A}_k^1$ defined by the equation
$$ q(x,y,z)=p(t)u^2$$
where $(x,y,z,u)$ are homogeneous coordinates for ${\bf P}_k^3$. Then $X$ can be regarded as an open set in $Z$ with $u\neq 0$. 
The complement of $X$ in $Z$ is given by $u=0$ and isomorphic to $D=C \times_{k} {\bf A}_k^1$  where $C$
is the projective conic in ${\bf P}_k^2$ defined by $q(x,y,z)=0$.
Let $f: {\bf P}_k^3 \times {\bf A}_k^1 \to {\bf A}_k^1$ be the projection onto ${\bf A}_k^1$.
We shall abuse notation and also denote by $f$ the restriction of $f$ to Zariski open sets of $X$.

Let $U_{\ov k}=U \times_k \ov k$. Let   $U_K=U \times_{{\bf A}_k^1} \Spec K$ and $Z_{K}=Z \times_{{\bf A}_k^1} \Spec K$.
Any invertible function on $U_{K} \subset Z_{K}$
has its divisor supported in $u=0$, which is an irreducible curve over $K$.
Hence such a function is a constant in $K^\times$.
Since the fibres of $f: U \to {\bf A}_k^1$ are nonempty, any invertible function on $U_{\ov k}$ is the inverse image
of a function in $K[U]^\times=K^\times$ which is invertible on ${\bf A}_{\ov k}^1$,
hence is in $\ov k^\times$. Thus $$ \ov k [U]^\times=\ov k^\times  . $$

Let $V=Z_{smooth}$   and $V_{\ov k}=V\times_k \ov k$. Since $p(t)$ is not a square in $K$, the 
 $K$-variety $$V_{K}=V_k\times_{{\bf A}_k^1}\Spec K \subset {\bf P}^3_{K}$$ is a smooth quadric
 whose discriminant is not a square.
 This implies that $\Pic(V_{K})=\Z$ and is spanned by the class
 of a hyperplane section.
 As a consequence, $\Pic(U_{K})=0$.
Since $U$ is smooth, $\Pic({\bf A}^1_{\ov k})=0$ and all the fibres of $f: U \to {\bf A}_k^1$ are
 geometrically integral, the restriction map
 $\Pic(U_{\ov k}) \to \Pic(U_{K})$ is an isomorphism.
 Thus $$\Pic(U_{\ov k})=0.$$

For any smooth projective quadric, the natural map $\br (K) \to \br (V_{K})$ is onto.
 Since $\br (K)=0$ by Tsen's theorem, one obtains $\br (V_{K})=0$. Moreover, since $V_{\ov k}$
is regular, 
the natural map $\br(V_{\ov k}) \to \br(V_{K})$ is injective. Therefore $ \br(V_{\ov k})=0.$

Let $$C_{\ov k}=C\times_k \ov k \ \ \ \text{ and } \ \ \ D_{\ov k}=D\times_k \ov k . $$
Since $D=C \times_{k} {\bf A}_k^1$ and $C_{\ov k} \simeq {\bf P}^1_{\ov k}$,
we have $H^1_{\et}(D_{\ov k}, \Q/\Z)=0$. 
Since $D_{\ov k}$ is a smooth divisor in the smooth variety $V_{\ov k}$,
we have the exact localization sequence
$$ 0 \to \br(V_{\ov k}) \to \br(U_{\ov k}) \to H^1_{\et}(D_{\ov k}, \Q/\Z).$$
One concludes
$$ \br(U_{\ov k})=0.$$

The Hochschild-Serre spectral sequence 
for \'etale cohomology of the sheaf ${\bf G}_{m}$ and  the projection morphism $U \to {\rm Spec} k$ yields a long exact sequence
$$ \Pic(U_{\ov k})^g \to  H^2(g,\ov k[U]^{\times}) \to Ker[ \br(U) \to \br(U_{\ov k})] \to H^1(g,\Pic(U_{\ov k}))$$
where $g=\Gal(\ov k/k)$. Combining it with  the displayed isomorphisms,  
we get
$$\br(k) \simeq \br(U).$$
  \end{proof}

Let us now consider  the case where  $p(t)$ is a square in $K=\ov k(t)$.

\begin{prop}\label{bexce}   Let  $p(t)=c.p_1(t)^{e_1}\dots p_s(t)^{e_s} $, $q(x,y,z)$ and 
  $U$  be as above. 
Assume all $e_i$ are even, i.e. $p(t)=c.r(t)^2$ with $c\in k^\times$ and $r(t) \in k[t]$  nonzero.
Let $d=-c.det(q)$.

The following conditions are equivalent:

(i) $d$ is not a square in $k$ and the natural map
$ H^3_{\et}(k,{\bf G}_m) \to H^3_{\et}(U,{\bf G}_m)$ is injective;

(ii) $\br(U)/\br(k)=\Z/2$.

If they are not satisfied then $ \br (U)/\br(k)=0$.
\end{prop}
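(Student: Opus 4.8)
The situation is now that $p(t)=c\,r(t)^2$ with $c\in k^\times$, so over $K=\ov k(t)$ the generic fibre $V_K\subset{\bf P}^3_K$ is a smooth quadric of \emph{square} discriminant. The first step is therefore a geometric computation, paralleling the proof of Prop.~\ref{bg} but in this degenerate case: one has $\ov k[U]^\times=\ov k^\times$ exactly as before (the argument with the divisor supported on $u=0$ is unaffected), but now $\Pic(V_K)=\Z^2$ since the quadric $q(x,y,z)=c\,r(t)^2u^2$ over $K$ has split discriminant and thus contains two families of lines; hence $\Pic(U_K)$ is no longer zero — it is the cokernel of $\Z\cdot(\text{hyperplane})\hookrightarrow\Z^2$, i.e.\ $\Z$, with the Galois group $g=\Gal(\ov k/k)$ acting on it through the quadratic character of $d=-c\det(q)$. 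Pulling back along $f\colon U_{\ov k}\to{\bf A}^1_{\ov k}$ one gets $\Pic(U_{\ov k})=\Z$ with $g$ acting via $\chi_d$, and the same localization sequence $0\to\br(V_{\ov k})\to\br(U_{\ov k})\to H^1_{\et}(D_{\ov k},\Q/\Z)$ together with $\br(V_{\ov k})=0$ (Tsen, regularity) and $H^1_{\et}(D_{\ov k},\Q/\Z)=0$ gives $\br(U_{\ov k})=0$.

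\textbf{Assembling the Hochschild--Serre sequence.} With $\br(U_{\ov k})=0$ and $\ov k[U]^\times=\ov k^\times$, the five-term exact sequence from the Hochschild--Serre spectral sequence for $U\to\Spec k$ reads
$$0\to\Pic(U)\to\Pic(U_{\ov k})^g\to H^2(g,\ov k^\times)\to \ker[\br(U)\to\br(U_{\ov k})]\to H^1(g,\Pic(U_{\ov k}))\to H^3(g,\ov k^\times),$$
and since $\br(U_{\ov k})=0$ the kernel on the left is all of $\br(U)$. Now $\br(k)=H^2(g,\ov k^\times)$ injects into $\br(U)$ (there is a $k$-point or at least a $k$-rational section giving a retraction, or more simply the structure map has a section over a dense open; in any case $\br(k)\to\br(U)$ is split injective because $U$ has points everywhere needed — one cites the existence of the section $t\mapsto$ a fixed smooth fibre point as in the $n\ge4$ arguments, or notes $H^2(g,\ov k^\times)\to H^2(g,\ov k[U]^\times)$ is an isomorphism of the relevant term). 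Thus
$$\br(U)/\br(k)\;\cong\;\ker\big[H^1(g,\Pic(U_{\ov k}))\to H^3(g,\ov k^\times)\big]\;=\;\ker\big[H^1(g,\Z_{\chi_d})\to H^3(k,{\bf G}_m)\big],$$
where $\Z_{\chi_d}$ denotes $\Z$ with $g$ acting through $\chi_d$. The map to $H^3(k,{\bf G}_m)=H^3_{\et}(k,{\bf G}_m)$ is the relevant differential $d_2\colon E_2^{1,1}\to E_2^{3,0}$, which fits into the commutative square with $H^3_{\et}(U,{\bf G}_m)$ as in the statement (the composite $H^3(k,{\bf G}_m)=E_2^{3,0}\to H^3_{\et}(U,{\bf G}_m)$ has kernel the image of this $d_2$).

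\textbf{Reading off the trichotomy.} If $d$ is a square in $k$, then $g$ acts trivially on $\Pic(U_{\ov k})=\Z$, so $H^1(g,\Z)=\Hom(g,\Z)=0$ and $\br(U)/\br(k)=0$. If $d$ is not a square, let $L=k(\sqrt d)$ and $g_L=\Gal(\ov k/L)$; then $H^1(g,\Z_{\chi_d})=\Z/2$ (the standard computation $H^1$ of $\Z$ with the sign action of a group surjecting onto $\Z/2$). In that case $\br(U)/\br(k)$ is $\Z/2$ if the differential $H^1(g,\Z_{\chi_d})\to H^3(k,{\bf G}_m)$ vanishes, and $0$ otherwise; and vanishing of that differential is, by the exactness of the five-term sequence and the commutative square, precisely equivalent to injectivity of $H^3_{\et}(k,{\bf G}_m)\to H^3_{\et}(U,{\bf G}_m)$. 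This gives the equivalence (i)$\iff$(ii) and the last sentence.

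\textbf{Main obstacle.} The routine part is the Picard-group bookkeeping; the delicate point I expect to have to argue carefully is the identification of the obstruction map $H^1(g,\Pic(U_{\ov k}))\to H^3_{\et}(k,{\bf G}_m)$ appearing in Hochschild--Serre with the restriction $H^3_{\et}(k,{\bf G}_m)\to H^3_{\et}(U,{\bf G}_m)$ featured in condition (i) — i.e.\ checking that "the $d_2$ differential is zero" and "$H^3(k)\hookrightarrow H^3(U)$" really are the same condition. This requires tracking the five-term (or seven-term) exact sequence of the spectral sequence one step further than usual and knowing that $\br(U_{\ov k})=0$ forces the relevant $E_\infty^{0,2}$ and $E_\infty^{1,1}$ terms to behave as claimed; one must also make sure $\br(k)\to\br(U)$ is genuinely injective (not merely that $\br(U)/\br(k)$ is the stated group as an abstract cokernel), which is where the hypothesis that $X_{smooth}$, hence $U$, has enough rational points — or the existence of a rational section of $f$ over a suitable open — gets used.
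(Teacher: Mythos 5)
Your proposal is correct and follows essentially the same route as the paper: compute $\ov k[U]^\times=\ov k^\times$, $\Pic(U_{\ov k})\simeq\Pic(U_K)\cong\Z$ with Galois acting through the quadratic character of $d$ (quotient of $\Pic(V_K)=\Z^2$ by the hyperplane class), and $\br(U_{\ov k})=0$, then read off the trichotomy from the extended low-degree exact sequence $\br(k)\to\br(U)\to H^1(g,\Pic(U_{\ov k}))\to H^3_{\et}(k,{\bf G}_m)\to H^3_{\et}(U,{\bf G}_m)$, whose validity (i.e.\ the identification of the $d_2$ image with the kernel of the edge map) is exactly what $\br(U_{\ov k})=0$ guarantees, as you note. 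The only superfluous worry is the injectivity of $\br(k)\to\br(U)$: since $\br(U)/\br(k)$ denotes the cokernel, the exact sequence gives the result without it.
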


\begin{proof}  
We keep the same notation as that in the proof of Prop.~\ref{bg}.
Since $p(t)$ is a square in $K$, one has $\Pic(V_{K}) \cong \Z \oplus \Z$. Moreover, the Galois group $\Gal(\ov k/k)$ acts on $\Pic(V_K)$ trivially if $-c.\det(q)\in  k^{\times 2}$. Otherwise the action of the Galois group $\Gal(\ov k/k)$  on $\Pic(V_K)$ factors through $\Gal(L/k)$ with permutation action, where $L=k(\sqrt{-c.det(q)})$. This yields isomorphisms of
  Galois modules $\Pic(U_{K})\cong \Z$ with trivial Galois action if $-c.\det(q)\in  k^{\times 2}$ and with the action of $\Gal(L/k)$ which sends a generator to its opposite otherwise. By the same argument as those in the proof of Prop.~\ref{bg}, one still has $${\ov k}^{\times} = \ov k [U]^{\times}, \ \ \ \Pic( U_{\ov k}) \simeq \Pic(U_{K}) \ \ \ \text{ and } \ \ \  \br(U_{\ov k})=0.$$
  
  Using $ \br(U_{\ov k})=0$, we deduce from the
 Hochschild-Serre spectral sequence  
 a long exact sequence
$$
  \br(k) \to \br(U) \to H^1(g, \Pic( U_{\ov k})) \to H^3_{\et}(k,{\bf G}_m) \to H^3_{\et}(U,{\bf G}_m).
$$

If $d=-c.\det(q)\in k^{\times 2}$, one has $$H^1(g, \Pic( U_{\ov k}))=\Hom_{cont}(g, \Z)=0$$  and the long exact sequence   yields $\br (U)/\br(k)=0.$

Assume $d=-c.\det(q)\notin  k^{\times 2}$. 
From
$$ H^1(g, \Pic( U_{\ov k}))=H^1(\Gal(L/k), \Pic (U_L))=\Z/2$$ where $U_L=U\times_k L$, 
one gets an inclusion $\br(U)/\br(k) \subset \Z/2$, 
which  is an equality  if and only if $ H^3_{\et}(k,{\bf G}_m) \to H^3_{\et}(U,{\bf G}_m)$ is injective.
\end{proof}

\begin{rem} \label{most}
 {\rm 
 The natural  map 
 $ H^3_{\et}(k,{\bf G}_m) \to H^3_{\et}(U,{\bf G}_m) $ is injective 
  under each of the following hypotheses: 
  
  (i) the open set $U$ has a $k$-point,
  
  (ii) the field $k$ is a number field (in which case $H^3_{\et}(k,{\bf G}_m)=0$).
    }
\end{rem}

\begin{prop}\label{bexcebis}  Keep notation as in Proposition \ref{bexce}.
Assume $\br(U)/\br(k)=\Z/2$. Then:
 
 (a) For any field extension $L/k$ the natural map $\br(U)/\br(k) \to \br(U_L)/\br(L)$ is onto.
 
 (b) For any field extension $L/k$ and any $\alpha \in {\bf A}^1(L)$ such that $p(\alpha)\neq 0$,
 the evaluation map  $\br(U)/\br(k) \to \br(U_{\alpha})/\br(L)$ on the fibre $q(x,y,z)=p(\alpha)$
 is onto.
\end{prop}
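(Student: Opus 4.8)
I would derive both (a) and (b) from the functoriality of the Hochschild--Serre machinery already used in the proof of Proposition \ref{bexce}. Set $g=\Gal(\ov k/k)$ and, to avoid clashing with the extension $L/k$ below, write $L_0=k(\sqrt d)$ for the quadratic extension appearing there. Recall from that proof that $\ov k[U]^\times=\ov k^\times$, that $\br(U_{\ov k})=0$, and that $\Pic(U_{\ov k})\cong\Z$, with $g$ acting through $\Gal(L_0/k)$ by sending a generator to its opposite; hence the sequence
$$\br(k)\to\br(U)\to H^1(g,\Pic(U_{\ov k}))\to H^3_{\et}(k,{\bf G}_m)\to H^3_{\et}(U,{\bf G}_m)$$
is exact, with the kernel of $\br(U)\to H^1(g,\Pic(U_{\ov k}))$ equal to the image of $\br(k)$; call $\delta_U$ the third arrow. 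By Proposition \ref{bexce} the hypothesis $\br(U)/\br(k)=\Z/2$ means exactly that $d\notin k^{\times2}$ and $\delta_U=0$, so that $\br(U)/\br(k)\oi H^1(g,\Pic(U_{\ov k}))=\Z/2$. The whole proof is then a matter of transporting the vanishing of $\delta_U$ along comparison maps.

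For (a), I would fix $L/k$ and put $g_L=\Gal(\ov L/L)$. Since the characteristic is $0$, the invariants $\ov L[U_L]^\times$, $\br(U_{\ov L})$ and $\Pic(U_{\ov L})$ agree with their $\ov k$-counterparts, and $g_L$ acts on $\Pic(U_{\ov L})\cong\Z$ through $\Gal(L(\sqrt d)/L)$ by the sign character. If $d\in L^{\times2}$ this action is trivial, so $H^1(g_L,\Pic(U_{\ov L}))=\Hom_{cont}(g_L,\Z)=0$, whence $\br(U_L)/\br(L)=0$ and (a) holds trivially. If $d\notin L^{\times2}$, then $\Gal(L(\sqrt d)/L)\to\Gal(L_0/k)$ is an isomorphism of groups of order $2$, so the restriction $\rho\colon H^1(g,\Pic(U_{\ov k}))\to H^1(g_L,\Pic(U_{\ov L}))$ is an isomorphism. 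The exact sequence above for $k$ and for $L$ form a commutative ladder; $\delta_U=0$ then forces the analogous $\delta_{U_L}=0$ (compose with $\rho$), so $\br(U_L)/\br(L)\oi H^1(g_L,\Pic(U_{\ov L}))$, and the ladder identifies $\br(U)/\br(k)\to\br(U_L)/\br(L)$ with $\rho$; in particular it is surjective.

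For (b), I would factor the evaluation map as $\br(U)/\br(k)\to\br(U_L)/\br(L)\to\br(U_\alpha)/\br(L)$, the first arrow being onto by (a), and treat the second. As $p(\alpha)\ne0$ one has $r(\alpha)\in L^\times$ and $-p(\alpha)\det(q)=d\,r(\alpha)^2$, so this scalar has the class of $d$ in $L^\times/L^{\times2}$. If $d\in L^{\times2}$ then $U_{\alpha,\ov L}$ is a split affine quadric surface, so $\br(U_{\alpha,\ov L})=0$ and $g_L$ acts trivially on $\Pic(U_{\alpha,\ov L})\cong\Z$, giving $\br(U_\alpha)/\br(L)=0$: nothing to prove. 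If $d\notin L^{\times2}$, then $\br(U_L)/\br(L)=\Z/2$ by the argument of (a), and, renaming $L$ as $k$, I must show that for $d\notin k^{\times2}$ and $\alpha\in{\bf A}^1(k)$ with $p(\alpha)\ne0$ the restriction $\br(U)/\br(k)\to\br(U_\alpha)/\br(k)$ is onto. Over the open set $r(t)\ne0$ the substitution $x\mapsto x\,r(t)$, $y\mapsto y\,r(t)$, $z\mapsto z\,r(t)$ identifies $U\cap\{r(t)\ne0\}$ with $Y\times\{r(t)\ne0\}$, where $Y\colon q(x,y,z)=c$; by the Picard computation in the proof of Proposition \ref{bg}, under these identifications the restriction $\Pic(U_{\ov k})\to\Pic(U_{\alpha,\ov k})$ becomes the identity of $\Pic(Y_{\ov k})\cong\Z$, and both sides carry the sign action of $\Gal(L_0/k)$; hence it induces an isomorphism $\rho'$ on $H^1(g,-)$. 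Running the ladder argument of (a) for the sequences attached to $U$ and to $U_\alpha$ (with common right-hand term $H^3_{\et}(k,{\bf G}_m)$), $\delta_U=0$ yields $\delta_{U_\alpha}=0$, so $\br(U_\alpha)/\br(k)\oi H^1(g,\Pic(U_{\alpha,\ov k}))$, and $\br(U)/\br(k)\to\br(U_\alpha)/\br(k)$ is identified with $\rho'$, hence surjective.

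The step I expect to cost the most work is the control of the geometric Picard groups together with their Galois actions: in (a) the invariance of $\Pic(U_{\ov k})$ under extension of the algebraically closed base field, and in (b) the identification of $\Pic(U_{\ov k})\to\Pic(U_{\alpha,\ov k})$ with an isomorphism of $\Z$'s, which I would obtain from the product description $U\cap\{r(t)\ne0\}\cong Y\times\{r(t)\ne0\}$ combined with the Picard assertions proved for Propositions \ref{bg} and \ref{bexce}. Once those are in place the rest is purely formal. An alternative would be to produce an explicit quaternion-algebra generator of $\br(U)/\br(k)$, pulled back on $U\cap\{r(t)\ne0\}$ from the generator $(\ell,d)$ of $\br(Y)/\br(k)$ of Proposition \ref{computbr}, and to evaluate it on the fibres directly; I would avoid this, as it would need rational points on $Y$ and on the fibres and more case analysis.
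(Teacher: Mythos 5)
Your proof is correct and follows essentially the same route as the paper: both rest on the functoriality of the Hochschild--Serre exact sequence, the identification $\br(U)/\br(k)\oi H^1(g,\Pic(U_{\ov k}))$, and the observation that the comparison maps between $H^1$ of rank-one lattices with compatible nontrivial (sign) Galois actions are isomorphisms $\Z/2 \oi \Z/2$. Your product decomposition $U\cap\{r(t)\neq0\}\cong Y\times\{r(t)\neq0\}$ simply makes explicit the step the paper dismisses with ``one readily verifies'', namely that $\Pic(U_{\ov k})\to\Pic(U_{\alpha,\ov L})$ is an isomorphism of Galois modules.
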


\begin{proof} 
 The long exact sequence  
$$
  \br(k) \to \br(U) \to H^1(g_{k}, \Pic( U_{\ov k})) \to H^3_{\et}(k,{\bf G}_m) \to H^3_{\et}(U,{\bf G}_m).
 $$
 is functorial in the base field $k$.
The assumption $\br(U)/\br(k)=\Z/2$ and the possible Galois action of the Galois group
on $\Pic( U_{\ov k})$ imply that the map $\br(U)/\br(k) \to H^1(g_{k}, \Pic( U_{\ov k}))$
is an isomorphism.

(a) If $d$ is a square in $L$,  then $\br(U_{L})/\br(L)=0.$ 
Assume not.  Let $L/k$ be any field extension.
Let $\overline{L}$ be an algebraic closure of $L$ extending $k \subset \overline{k}$.
If $\br(U_{L})/\br(L) =0$ the assertion is obvious.
If $\br(U_{L})/\br(L) =\Z/2$, then $H^1(g_{L}, \Pic( U_{\ov L}))=\Z/2$.
The natural map $\Pic( U_{\ov k}) \to \Pic( U_{\ov L})$
is  then a map of rank one lattices with nontrivial, compatible Galois actions.
This implies that the   natural map $H^1(g_{k}, \Pic( U_{\ov k})) \to H^1(g_{L}, \Pic( U_{\ov L}))$
is   an isomorphism. This implies that $\br(U)/\br(k) \to \br(U_L)/\br(L)$ is an isomorphism,
as claimed in (a).

\smallskip

  (b)  If $d$ is a square in $L$,  then $\br(U_{\alpha})/\br(L)=0.$
Assume not.  Let $\overline{L}$ be an algebraic closure of $L$ extending $k \subset \overline{k}$.
By the functoriality of the Hochschild-Serre spectral sequence for the morphism $U_{\alpha} \to U$, we have
a commutative diagram of exact sequences
\begin{equation}
\begin{CD}
 \br(k) &  \to  &\br(U) & \to H^1(g_{k}, \Pic( U_{\ov k})) & \to &H^3_{\et}(k,{\bf G}_m) & \to H^3_{\et}(U,{\bf G}_m)\\
  \downarrow &     &  \downarrow  &    \downarrow  &   & \downarrow &   \downarrow \\
   \br(L) &  \to  &\br(U_{\alpha}) & \to H^1(g_{L}, \Pic( U_{\alpha, \ov L}))& \to &H^3_{\et}(L, {\bf G}_m) & \to H^3_{\et}(U_{\alpha},{\bf G}_m)\\
\end{CD}
\end{equation} 
One readily verifies that the evaluation map  $\Pic( U_{\ov k}) \to  \Pic( U_{\alpha, \ov L})$ is an isomorphism
of Galois modules (split by a quadratic extension),  hence the map
$H^1(g_{k}, \Pic( U_{\ov k})) \to H^1(g_{L}, \Pic( U_{\alpha, \ov L}))$ is an isomorphism $\Z/2 = \Z/2$.
From the diagram we conclude that  $\br(U) \to \br(U_{\alpha}) /\br(L)$ is onto.
\end{proof}

\begin{prop}\label{cexp}  
Let  $p(t)=c. \prod_{i\in I}p_i(t)^{e_i} $, $q(x,y,z)$, $X$, $U$ and   
 $\pi: \tilde{X} \to X$ be as above.
Assume  $ H^3_{\et}(k,{\bf G}_m) \to H^3_{\et}(U,{\bf G}_m)$ is injective.
 Let $d=-c. det(q)$.

Consider the following conditions:

(i) All $e_i$ are even, i.e. $p(t)=c. r(t)^2$ for $c \in F^{\times}$ and some $r(t) \in k[t]$.

(ii) $d  \notin k^{\times 2}$.

(iii) For each $i \in I$, $d  \in k_{i}^{\times 2}$.

We have:

(a) If (i) or (ii) or (iii)  is not fulfilled, then $\br({\tilde X})/\br(k)=0$.

(b)  Assume $U(k) \neq \emptyset$. If   (iii) is fulfilled, then $\br({\tilde X}) \oi \br(U)$.

(c) If (i), (ii) and (iii) are fulfilled, then $\br({\tilde X})/\br(k) \oi \br(U)/\br(k)=\Z/2$.
In that case, for any field extension $L/k$ and any  $\alpha \in L$
such that $p(\alpha) \neq 0$, the evaluation map $\br(\tilde{X})/\br(k) \to \br(X_{\alpha})/\br(L)$
is surjective.

\end{prop}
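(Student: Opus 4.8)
The plan is to realise $\br(\tilde X)$ as a subgroup of $\br(U)$. Since $\tilde X$ is regular and integral, purity for the Brauer group identifies $\br(\tilde X)$ with the subgroup of $\br(U)\subset\br(k(X))$ consisting of the classes unramified along every exceptional divisor of $\pi$; as noted before the statement, this subgroup does not depend on the chosen resolution, so I would take $\pi$ to be the blow-up of $X$ along its singular locus, which by Lemma~\ref{singularfibres} is a finite reduced set of closed points. Before any computation I would clear the easy cases. If (i) fails, Proposition~\ref{bg} gives $\br(k)\oi\br(U)$; if (i) holds but (ii) fails, Proposition~\ref{bexce} gives $\br(U)/\br(k)=0$. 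In both cases $\br(\tilde X)/\br(k)\subset\br(U)/\br(k)=0$, which proves (a) in these cases, and — since $U(k)\neq\emptyset$ forces $\br(k)\hookrightarrow\br(U)$ — also proves (b) in these cases.

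So suppose from now on that (i) and (ii) hold, so that $\br(U)/\br(k)=\Z/2$ by Proposition~\ref{bexce} (here the standing hypothesis on $H^3_{\et}(k,{\bf G}_m)$ is used). After an invertible linear change of $x,y,z$ over $k$ I may assume $q(x,y,z)=xy-\det(q)z^2$, and the first real step is to pin down a generator of $\br(U)/\br(k)$: I claim the quaternion algebra $\xi=(x,d)\in\br(k(X))$, with $d=-c.\det(q)$, does the job. It lies in $\br(U)$ because $x$ is invertible on $U$ away from the divisor $\{x=0\}$, and along that divisor $d$ becomes a square (the residue field is $k(\sqrt d)(t,y)$), so $\xi$ has trivial residue there. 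It is not in $\br(k)$ because, writing $p(t)=c.r(t)^2$, for any $\alpha\in k$ with $p(\alpha)\neq0$ one has $-p(\alpha)\det(q)=d\,r(\alpha)^2\equiv d$ modulo squares, whence by Proposition~\ref{computbr}(ii) the restriction of $\xi$ to the fibre $X_\alpha$ generates $\br(X_\alpha)/\br(k)=\Z/2$ (nontrivial since $d\notin k^{\times 2}$).

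The hard part is the residue computation for $\xi$ along the exceptional divisors of $\tilde X$. Under (i) every root of $p$ is a multiple root, so the singular points of $X$ are exactly the closed points $P_i=\{p_i(t)=0,\ x=y=z=0\}$, $i\in I$, with residue field $k_i$. I would argue that, étale-locally over $k_i$ near $P_i$ — after absorbing the relevant power of a local parameter into a square root of a unit — the variety $X$ is the affine quadric cone $q(x,y,z)=\lambda_i w^2$ for a suitable $\lambda_i\in k_i^{\times}$; blowing up its vertex resolves the singularity in a single step, the exceptional divisor $E_i$ being birational over $k_i$ to the smooth quadric surface $q(x,y,z)=\lambda_i w^2$ in $\P^3_{k_i}$, hence geometrically integral over $k_i$, and the coordinate function $x$ having valuation $v_{E_i}(x)=1$. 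Since $d\in k^{\times}$ is a constant, $\partial_{E_i}(\xi)$ is the class of $d^{\,v_{E_i}(x)}=d$ in $\kappa(E_i)^{\times}/\kappa(E_i)^{\times 2}$, and as $k_i$ is algebraically closed in $\kappa(E_i)$ this vanishes if and only if $d\in k_i^{\times 2}$. Therefore $\xi$ lifts to $\br(\tilde X)$ if and only if condition (iii) holds. This local analysis of the singularities, and the resulting identification of $\kappa(E_i)$ and of $v_{E_i}(x)$, is the step I expect to require genuine care.

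Finally I would assemble the conclusions. If (iii) fails, then $\xi\notin\br(\tilde X)$, so $\br(\tilde X)/\br(k)$ is a proper subgroup of $\br(U)/\br(k)=\Z/2$, hence zero; combined with the first paragraph this proves (a). If (iii) holds and $U(k)\neq\emptyset$, then either $\br(U)/\br(k)=0$ (already treated) or $\xi\in\br(\tilde X)$; in both cases $\br(\tilde X)=\br(U)$, proving (b). If (i), (ii) and (iii) all hold, then $\xi\in\br(\tilde X)$, so $\br(\tilde X)/\br(k)\to\br(U)/\br(k)$ is surjective and, being injective, an isomorphism $\Z/2=\Z/2$; and for $\alpha\in L$ with $p(\alpha)\neq0$ the locus $x=y=z=0$ misses $X_\alpha$, so $U_\alpha=X_\alpha$, and the evaluation map factors as $\br(\tilde X)/\br(k)\oi\br(U)/\br(k)\to\br(U_\alpha)/\br(L)=\br(X_\alpha)/\br(L)$, the last map being surjective by Proposition~\ref{bexcebis}(b).
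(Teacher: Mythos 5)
Your overall architecture (identify a generator of $\br(U)/\br(k)$, then decide via purity whether it is unramified along the exceptional divisors of a resolution) is in the right spirit, but two of its load-bearing steps fail as stated. First, the reduction to $q(x,y,z)=xy-\det(q)z^2$ by a linear change of variables \emph{over $k$} is only possible when $q$ is isotropic over $k$; no such hypothesis is available (in part (a) not even $U(k)\neq\emptyset$ is assumed), so the class $(x,d)$ you propose as the generator of $\br(U)/\br(k)$ is simply not defined over $k$ in general. The paper gets around this by first extending scalars to $F_i=k(C).k_i$, where $C$ is the conic $q=0$ — over its function field $q$ is automatically isotropic — and then using the fact that $\br(U)/\br(k)\to \br(U_{F_i})/\br(F_i)$ is an isomorphism of groups $\Z/2$ to transport the non-extension statement back down to $k$. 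Your argument needs the same (or an equivalent) base-change device.

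Second, your local description of the singularities is wrong whenever some $e_i>2$. \'Etale-locally at $P_i$ the equation is $q(x,y,z)=\lambda_i s^{e_i}$ with $s$ a local parameter; setting $w=s^{e_i/2}$ is not an \'etale change of coordinates for $e_i>2$, so $X$ is \emph{not} \'etale-locally the quadric cone $q=\lambda_i w^2$, and blowing up the reduced singular point does not resolve the singularity in one step (one blow-up lowers $e_i$ by $2$ and produces further exceptional divisors, e.g.\ cones over $C$, on which the residues would also have to be checked). The paper avoids this geometry entirely: to show a generator does \emph{not} extend it uses the formal arc $\sigma:\Spec F_{i}[[T]]\to X$, $(x,y,z,t)=(T,h(T),0,T)$, lifts it to $\tilde X$ by properness of $\pi$, and observes that the pullback $(T,d)$ has nontrivial residue at $T=0$ when $d\notin F_i^{\times 2}$; to show everything extends under (iii) it notes that any divisorial valuation of $k(X)$ centered outside $\pi^{-1}(U)$ satisfies $v(p_i(t))>0$ for some $i$, hence has residue field containing $k_i$, in which $d$ is a square, so the residue of any $(g,d)$ vanishes. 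Your part (c) and the bookkeeping deducing (a), (b), (c) from the extension/non-extension dichotomy are fine, but the two steps above need to be repaired before the proof is complete.
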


\begin{proof} 
One has $\br(\tilde{X}) \subset \br(U)$.

\smallskip

{\it Proof of (a)}

By Proposition \ref{bg}, resp.  Proposition \ref{bexce},
if (i), resp. (ii), is not fulfilled, then $\br(U)/\br(k)=0$. 
Assume (i) and (ii) are fulfilled.  Proposition \ref{bexce} then gives
 $\br(U)/\br(k) \simeq \Z/2$.

 Let $F$ be the function field of the smooth projective conic $C$ defined by $q(x,y,z)=0$. 
 Assume (iii) does not hold. Let $i \in I$ such that $d  \notin k_{i}^{\times 2}$.
 Let $F_{i}$ be the composite field $F.k_{i}$.  Since $k$ is algebraically closed in $F$,
 so is $k_{i}$ in $F_{i}$. Thus $d$ is not a square in $F_{i}$.

By the same argument as in Proposition \ref{bexcebis}, the maps
 $$\Z/2=  \br(U)/\br(k)  \to \br(U_{F_{i}} )/\br(F_{i})$$
is an isomorphism.
 Over the field $F_{i}$, one may rewrite the equation of $X_{F_{i}}$ as
 $$ xy- det(q)z^2= c.r(t)^2$$
 and assume that $t=0$ is a root of $r(t)$.
 After restriction to the generic fibre of $U_{F_{i}} \to \Spec F_{i}[t]$, 
 the quaternion algebra   $(x,d) \in \br(F_{i}(X))$ defines a generator
 modulo $\br(F_{i} (t))$. This follows from    Proposition \ref{computbr}.
 Now  $(x,d)=(y.det(q),d)$ is unramified on the
 complement of the closed set $\{x=y=0\}$ on $U_{F_{i}}$, of codimension 2 in $U_{F_{i}}$, thus $(x,d)$ belongs
 to $\br(U_{F_{i}} )$. It thus generates $\br(U_{F_{i}} )/\br(F_{i})$.

  Define $h(T)\in k[T]$ by $Th(T)=cr(T)^2$.
   Consider the morphism
  $$ \sigma:  \Spec F_{i}[[T]] \to X$$
  defined by $$(x,y,z,t)=(T,h(T),0,T).$$ 
  The induced morphism $\Spec F_{i}((T)) \to X$ has its image in $U$. 
  It thus defines a morphism $\sigma: \Spec F_{i}((T)) \to \tilde{X}$.
  Since 
  $\pi: \tilde{X} \to X$ is proper, we conclude that the morphism
  $\sigma$ lifts to a morphism $ \tilde{\sigma}: \Spec F_{i}[[T]]) \to    \tilde{X}$.
  Suppose $(x,d) \in \br(U_{F_{i}})$ is in the image of $\br({\tilde X}_{F_{i}})Ê\to \br(U_{F_{i}})$.
  Then $\tilde{\sigma}^*((x,d))= (T,d)$ belongs to $\br(F_{i}[[T]])$. 
  But the residue of $(T,d) \in \br(F_{i}(T))$ at $T=0$ is $d  \neq 1 \in F_{i}^{\times}/F_{i}^{\times 2}$.  
  This is a contradiction. Taking into account Proposition \ref{bexcebis},  we conclude 
  that the embedding  $\br({\tilde X})/\br(k) Ê\hookrightarrow \br(U)/\br(k)=\Z/2$ is not onto,
hence $\br({\tilde X})/\br(k) =0$.
  
  \smallskip
  
  {\it Proof of (b)}
  
 Let $E=k(\sqrt{d})$. By Proposition \ref{bexce}, we have $\br(U_{E})/\br(E)=0$.
 Using the hypothesis $U(k) \neq \emptyset$, we see that any element of
 $\br(U) \subset \br(k(U))$ may be represented as the sum of an element of 
$\br(k)$ and the class of a quaternion algebra $(g,d)$  for some $g\in k(U)^\times$.

Assume 
(iii) is fulfilled.
Let $x$ be a point of codimension 1 of $\tilde{X}$ which does not belong to $p^{-1}(U)$.
Let $v$ be the associated discrete rank one valuation on the function field of $X$.
We then have 
$v(p_{i}(t))>0$ for some $i \in I$.
We thus have $k \subset k_{i} \subset \kappa_{v}$, where $\kappa_{v}=\kappa(x)$ is the residue field of $v$.
If  assumption (iii) is fulfilled we conclude that $d$ is a square in $\kappa_{v}$.

But then
 the residue of $(g,d)$
 at $x$, which is a power of $d$
 in $\kappa_{v}^{\times}/\kappa_{v}^{\times 2}$, is trivial. By purity for the Brauer group, we conclude
$\br({\tilde X})/\br(k)=\br(U)/\br(k)$. This proves (b).

\smallskip

{\it Proof of (c)}

This   follows from Prop. \ref{bexce} and Prop. \ref{bexcebis}.
 \end{proof}
 
 \bigskip

Let $Q$ be the smooth affine quadric over $k$ defined by $q(x,y,z)=c$.
For simplicity,  let us assume $Q(k)\neq \emptyset$. 
In the situation of Proposition \ref{bexce}, with $d=-c.\det(q)\notin (k^\times)^2$,
 one may  give an explicit generator in $\br(U)$ for $\br(U)/\br(k)=\Z/2$.

The assumption $Q(k)\neq \emptyset$ implies  $U(k)\neq \emptyset$. 
By
Prop. \ref{computbr},  we have
$ \br(Q)/\br(k) =
\Z/2  $.
Let  $ \alpha x+\beta y+\gamma z+ \delta=0$  define the tangent
plane of $Q$ at some $k$-point. Not all 
  $\alpha,\beta,\gamma$ are zero. As recalled in Prop. \ref{computbr},
  $$A=(\alpha x+\beta y+\gamma z+ \delta,d)\in \br(k(Q))$$
 \noindent belongs to $\br(Q)$  and generates $\br(Q)/\br(k)$.

Given a nonzero $r(t) \in k[t]$, let $W=Q\times_k ({\bf A}^1_{k} \setminus \{r(t)=0\}).$
Consider the birational $k$-morphism
$$ f: Q \times_k {\bf A}_k^1  \to X \subset {\bf A}^4_{k}; \ \ \ (x,y,z,t) \mapsto (r(t)x,r(t)y,r(t)z,t).$$
This map induces an isomorphism between $W=Q\times_k ({\bf A}^1\setminus \{r(t)=0\}$
and the   open set $V$
 of $U=X_{smooth}$ defined by $r(t)\neq 0$. Let $A_{V}$ be the image of $A$ inside $\br(V)$ under the composition map
$$ \br(Q) \to \br(W) \cong \br(V).$$

\begin{prop}\label{bexp} 
Let $p(t)=c. r(t)^2$ with $c\in k^\times$ and $r(t) \in k[t]$  nonzero.
Assume $$d=-c.\det(q)\notin k^{\times^2}. $$ 
Assume $Q(k) \neq \emptyset$. With notation as above, the element
$$B=A_{V}+(r(t),d)= (\alpha x + \beta y + \gamma z + \delta  r(t), d) \in \br(V)$$ 
can be extended to $\br(U)$ and it  generates the group $\br(U)/\br(k) \simeq \Z/2$.
\end{prop}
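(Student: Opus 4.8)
The plan is to produce an element of $\br(V)$ that visibly extends to $\br(U)$ and then to show it is nonzero in $\br(U)/\br(k)$, which by Proposition~\ref{bexce} is $\Z/2$. First I would record the identity
$$ A_V + (r(t),d) = (\alpha x + \beta y + \gamma z + \delta, d) + (r(t),d) = (\alpha (r(t)x) + \beta (r(t)y) + \gamma (r(t)z) + \delta r(t), d) $$
in $\br(k(V))$, using bimultiplicativity of the quaternion symbol in the first slot together with the fact that on $V$ the coordinates of $X$ are $(r(t)x, r(t)y, r(t)z, t)$ under the isomorphism $f\colon W \cong V$; here I am abusing notation by writing $x,y,z$ for the coordinates on $Q$ and reading off $\alpha x + \beta y + \gamma z + \delta$ as a function on $Q$ pulled back to $W$, so that its image in $k(V)$ is exactly $r(t)^{-1}(\alpha X + \beta Y + \gamma Z) + \delta$, and multiplying the first slot by $r(t)^2$ (a square, hence harmless) gives the stated linear form $\ell(X,Y,Z,t) := \alpha X + \beta Y + \gamma Z + \delta r(t)$ in the $X$-coordinates on $U$. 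This shows $B$ is represented by the honest quaternion algebra $(\ell, d) \in \br(k(U))$.

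Next I would check that $(\ell,d)$ is unramified on $U$, i.e. lies in $\br(U)$. Since $d$ is fixed and not a square, the only possible ramification of $(\ell,d)$ along a codimension-one point $x$ of $U$ is at a point where $v_x(\ell)$ is odd and $d \notin \kappa(x)^{\times 2}$; there the residue is the class of $\ell$ modulo squares. On the locus $\ell \neq 0$ there is nothing to check, so I only need the divisor $\{\ell = 0\} \cap U$. The key geometric input, exactly as in Proposition~\ref{computbr}(i) for the affine quadric $Q$ itself, is that $\ell$ is (a square times) the affine equation of the tangent plane at a rational point of the smooth projective quadric $q(x,y,z) = cu^2$: restricted to each smooth fibre $Q_{p(\alpha)} \colon q = p(\alpha)$ with $p(\alpha) \neq 0$, the algebra $(\ell, d)$ is the pullback of the generator $A$ of $\br(Q)/\br(k)$ under the scaling isomorphism $Q \cong Q_{p(\alpha)}$, hence is unramified on every such fibre. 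It remains to control ramification along components of $\{\ell=0\}$ that are vertical, i.e. contained in a fibre $t = \beta$ with $p(\beta) = 0$ (equivalently $r(\beta)=0$); but on $U$ the locus $x=y=z=0$ has been removed, and over $r(\beta) = 0$ one has $\ell = \alpha X + \beta Y + \gamma Z$, whose zero locus meets $U$ in a divisor along which $(\ell, d) = (\text{unit multiple of a nonzero coordinate among }X,Y,Z,\ d)$, and this is killed because on the affine quadric $q(X,Y,Z)=0\cdot(\text{over that fibre})$… here I expect to argue, as in the proof of Proposition~\ref{cexp}(a), that $(\ell,d)$ is unramified in codimension one away from $\{x=y=z=0\}$ by rewriting $\ell \cdot (\text{conjugate linear form})$ as a norm or as $d$ times a square using the quadric relation, so that purity for the Brauer group (the smooth locus $U$ is regular) forces $(\ell,d) \in \br(U)$.

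Finally, to see $B$ generates $\br(U)/\br(k) = \Z/2$ rather than being zero, it is enough to exhibit one field extension or one fibre on which its image is nontrivial. Restricting to a smooth fibre $Q_{p(\alpha)}$: by Proposition~\ref{bexce} combined with Proposition~\ref{bexcebis}(b) the evaluation map $\br(U)/\br(k) \to \br(X_\alpha)/\br(k)$ is onto, and on that fibre $B$ restricts to the generator $A$ of $\br(Q_{p(\alpha)})/\br(k) \cong \Z/2$ by construction; since $A \neq 0$ there, $B \neq 0$ in $\br(U)/\br(k)$. As $\br(U)/\br(k) \cong \Z/2$, $B$ generates it, completing the proof. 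The main obstacle is the unramifiedness check of the middle paragraph — specifically, handling the components of $\{\ell = 0\}$ lying over the roots of $r(t)$ and confirming that after removing $\{x=y=z=0\}$ no residue survives; everything else is a formal manipulation of quaternion symbols together with the already-established computation of $\br(Q)/\br(k)$ and $\br(U)/\br(k)$.
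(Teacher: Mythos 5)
Your overall strategy is the paper's: the same symbol identity, reduction to a codimension-one residue computation, and specialization to a fibre to detect nontriviality; the first and last steps are essentially fine. The gap is exactly where you flag it, and it stems from looking at the wrong divisors. Since $A_V$ lies in $\br(V)$ (Proposition \ref{computbr}(i), pulled back through $f$) and $(r(t),d)$ lies in $\br(V)$ because $r(t)$ is invertible there, $B=A_V+(r(t),d)$ is unramified on all of $V$ with no further work; in particular nothing needs to be verified along components of $\{\ell=0\}$ that meet $V$, and no norm/conjugate-linear-form computation is needed anywhere. The only codimension-one points of $U$ left to examine are those outside $V$, i.e.\ the components of $\{r(t)=0\}\cap U$, which are the full fibres cut out by $p_i(t)=0$, $q(x,y,z)=0$ (punctured quadric cones). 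A ``vertical component of $\{\ell=0\}$'' inside such a fibre would force the linear form $\alpha x+\beta y+\gamma z$ to vanish identically on the irreducible cone $q=0$, i.e.\ to be divisible by $q$, which is impossible for degree reasons; so the locus you worry about is of codimension two in $U$ and irrelevant for purity. What actually has to be checked is the residue of $(\ell,d)$ at the generic point of each such fibre, and there $\ell$ reduces to $\alpha x+\beta y+\gamma z$, nonzero in the function field of the cone by the same divisibility remark; hence $\ell$ and $d$ are both units and the residue vanishes. That one observation (which is the paper's one-line argument) closes the gap; your proposed route via a norm identity is not needed and was not going to address the right valuations.

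Two smaller points. At a divisorial valuation $v$ with $v(d)=0$ the residue of $(\ell,d)$ is the class of $d^{\,v(\ell)}$ in $\kappa(v)^{\times}/\kappa(v)^{\times 2}$, not the class of $\ell$; your stated criterion ($v(\ell)$ odd and $d\notin\kappa(v)^{\times 2}$) is nevertheless the correct one. For nontriviality you do not need the surjectivity of Proposition \ref{bexcebis}(b): restricting $B$ to a fibre $t=\alpha$ with $r(\alpha)\neq 0$ gives $A+(r(\alpha),d)$, which agrees with $A$ modulo $\br(k)$ and is therefore nonzero in $\br(X_{\alpha})/\br(k)=\Z/2$ (note $d\,r(\alpha)^2\notin k^{\times 2}$), so $B$ cannot be constant; the paper runs the same argument with the generic fibre over $k(t)$.
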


\begin{proof}  On $V \subset U=X_{smooth}$, we have
$$ A_{V} = (\alpha x/r(t) + \beta y/r(t) + \gamma z/r(t) + \delta , d)= (\alpha x + \beta y + \gamma z + \delta  r(t), d) - (r(t),d).$$
Thus 
$$B=A_{V}+(r(t),d)= (\alpha x + \beta y + \gamma z + \delta  r(t), d) \in \br(k(V))$$ 
is unramified on $V$. To check that it is unramified on $U$, it is enough to
compute the residue at the generic point of each of the components of $r(t)=0$ on $U$.
These are defined by a system $p_{i}(t)=0, q(x,y,z)=0$.
But at such a point, $\alpha x + \beta y + \gamma z + \delta  r(t)$  is a unit
since it
 induces  the class of $\alpha x + \beta y + \gamma z$ on the residue field, and this
 is not zero since  $\alpha x + \beta y + \gamma z$
  is not divisible by $q(x,y,z)$.
 Since  $d$ is clearly a unit, we conclude that $B$ is not ramified at such points.
The  natural map $\br(Q)/\br(k) \to \br(Q_{k(t)})/\br(k(t))$ is the identity on $\Z/2$.
It sends the nontrivial class $A$ to the class of $B$. The image of $B$
in $\br(U)/\br(k)=\Z/2$ is thus nontrivial. 
\end{proof}

\medskip

 One may use this proposition to give a more concrete description of
specialization of the Brauer group, as discussed in Prop. 5.7 (c).

\section{Arithmetic of the   equation $q(x,y,z)=p(t)$}\label{2dim}

Let $F$ be a number field,  $q(x,y,z)$ a nondegenerate quadratic form in 3 variables over $F$
and $p(t) \in F[t]$ a nonzero polynomial.
Let $X$ be the affine variety over $F$ defined by the equation
\begin{equation} \label{equ} q(x,y,z)=p(t). \end{equation} 
The singular points of $X_{\ov F}$ are 
the points $(0,0,0,t)$ with $t$ a multiple root of $p$ (Lemma \ref{singularfibres}).
Let $U \subset X_{smooth}$ be the complement of  the closed set of $X$ defined by $x=y=z=0$.

 Let $\pi : \tilde{X} \to X$ a desingularization of $X$, i.e. $\tilde X$ is smooth
and integral,
the map $\pi  $ is proper and birational. We    assume that 
$\pi: \pi ^{-1}(X_{smooth}) \to X_{smooth}$ is an isomorphism.
Thus  $\pi: \pi^{-1}(U) \to U$, is an isomorphism.

Write  $p(t)=c.p_1(t)^{e_1}\dots p_s(t)^{e_s} $,
with  $c$ is in $F^\times$
and the  $p_i(t)$, $1\leq i\leq s$ distinct monic irreducible   polynomials
over $F$. Let $F_{i}=F[t]/(p_{i}(t))$ for $1\leq i\leq  s$.

Under some local isotropy condition for $q$, we investigate  strong approximation for
the $F$-variety $\tilde X$.

This variety is equipped with an obvious fibration $\tilde{X} \to {\bf A}^1_{F}=\Spec F[t]$.
 
We begin  with two lemmas.

\begin{lem}\label{p} If $r(t)$ is an irreducible polynomial over a number field $F$, then
there are infinitely many valuations  $ v$ of $F$ for which there exist infinitely many
$t_{v}\in \frak o_v$ with $ v(r(t_{v}))=1$.
\end{lem}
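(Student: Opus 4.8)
The plan is to combine a Chebotarev-type existence statement with a Hensel lift and a first-order Taylor expansion. Put $K=F[t]/(r(t))$; since $r$ is irreducible, $K$ is a number field of degree $\deg r$ over $F$, and the class of $t$ is a root of $r$ in $K$. Only finite places $v$ of $F$ will be used, namely those lying below a prime of $K$ of residue degree $1$; at archimedean or (irrelevant) bad places the statement $v(r(t_v))=1$ is not even being asserted.

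First I would invoke the classical fact --- a special case of the Chebotarev density theorem applied to the Galois closure of $K/F$ --- that there are infinitely many finite places $v$ of $F$ that are unramified in $K$, prime to $\disc(r)$ and to the leading coefficient of $r$, and such that $v$ has a prime of $K$ of residue degree $1$ above it. For such a $v$ the reduction $\bar r=r\bmod\pi_v\in\kappa_v[t]$ is a separable polynomial having a root $\bar\theta_v\in\kappa_v$; separability forces this root to be simple, i.e.\ $\bar r'(\bar\theta_v)\neq 0$. By Hensel's lemma $\bar\theta_v$ lifts to an exact root $\theta_v\in\frak o_v$ of $r$, and since reduction commutes with differentiation, $r'(\theta_v)\in\frak o_v^\times$.

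Now write $r(\theta_v+Y)=r(\theta_v)+r'(\theta_v)Y+Y^2h(Y)$ with $h(Y)\in\frak o_v[Y]$. For any unit $u\in\frak o_v^\times$ the point $t_v:=\theta_v+\pi_v u\in\frak o_v$ satisfies $r(t_v)=r'(\theta_v)u\,\pi_v+\pi_v^2(\cdots)$, so $v(r(t_v))=1$ because $r'(\theta_v)u$ is a unit. Distinct units $u$ yield distinct $t_v$, and $\frak o_v^\times$ is infinite, so for each admissible $v$ there are infinitely many $t_v\in\frak o_v$ with $v(r(t_v))=1$; as there are infinitely many admissible $v$, the lemma follows. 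I do not expect a genuine obstacle: beyond elementary commutative algebra the only input is the infinitude of primes of $F$ admitting a degree-one prime of $K$ above them, which is standard --- and even the full force of Chebotarev can be avoided, since the primes splitting completely in the Galois closure of $K/F$ already suffice, their infinitude following from the simple pole at $s=1$ of the corresponding Dedekind zeta function.
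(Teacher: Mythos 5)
Your proof is correct and follows essentially the same route as the paper's: both invoke Chebotarev to produce infinitely many places where $r$ acquires a simple root in $\frak o_v$ (equivalently, where $r'$ is a unit at that root), and then perturb the root by $\pi_v$ times a unit to force $v(r(t_v))=1$. The paper carries out the last step by factoring $r$ into linear factors over a totally split place and computing the valuation factor by factor, whereas you use Hensel plus a first-order Taylor expansion at a single degree-one root, but the mechanism is the same.
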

\begin{proof} By Chebotarev's theorem, there are infinitely many
valuations $ v$ of $F$ which are totally split in the field $F[t]/(r(t))$.
Let $d$ denote the degree of $r(t)$.
For almost all such  $ v$, we may write  
$$r(t)=c\prod_{i=1}^d (t-\xi_i) \in F_{v}[t]$$  with all $\xi_i$ in $\frak o_{v}$
and $c$ and all   $\xi_i-\xi_j$ ($i\neq j$)
 units in $\frak o_{v}$. Since there are infinitely many elements of $\frak o_{v}$ with 
 $v$-valuation 1,
there exist infinitely many $t_{v}\in \frak o_v$ such that $ v(t_{v}-\xi_1)=1$. Then $ v(r(t_{v}))=1$.
\end{proof}

\begin{lem} \label{deci} 
Let $F$ be a number field, and $q(x,y,z)$ and $p(t)$ be as above.
If not all $e_{i}$ are even, then there exist infinitely many valuations $w$ of $F$
  for which there exists $t_{w} \in \frak o_w$ such that 
  $ w(p(t_{w}))$ is odd   and $-p(t_{w}).\det(q)\not \in
F_{w}^{\times 2}.$
\end{lem}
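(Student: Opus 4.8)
The plan is to reduce to the case of a single irreducible factor with odd exponent. Since not all $e_i$ are even, pick an index $i_0$ with $e_{i_0}$ odd and set $r(t)=p_{i_0}(t)$. By Lemma \ref{p}, there are infinitely many valuations $v$ of $F$ for which there exist infinitely many $t_v \in \frak o_v$ with $v(r(t_v))=1$. I would impose finitely many extra good-reduction conditions on such $v$ (discarding finitely many): $v$ non-dyadic; $\det(q) \in \frak o_v^\times$; $c \in \frak o_v^\times$; and, for each $j \neq i_0$, the resultant of $p_{i_0}$ and $p_j$ is a $v$-unit, so that a root $t_v$ of $r$ mod $\pi_v$ is not a root of any $p_j$ mod $\pi_v$. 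For each such $v$ and each of the infinitely many $t_v$ with $v(r(t_v))=1$, we then get $v(p_j(t_v))=0$ for $j\neq i_0$, hence
$$ v(p(t_v)) = v(c) + \sum_j e_j v(p_j(t_v)) = e_{i_0}\cdot v(r(t_v)) = e_{i_0},$$
which is odd. This handles the valuation of $p(t_w)$.

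The second condition, $-p(t_w)\det(q)\notin F_w^{\times 2}$, is then automatic at such places: since $v$ is non-dyadic, the valuation of a square is even, and $v(-p(t_v)\det(q)) = v(p(t_v)) = e_{i_0}$ is odd, so $-p(t_v)\det(q)$ cannot be a square in $F_v^\times$. So in fact every one of the infinitely many places $v$ selected above already works, with any of the associated $t_v$; one does not even need the freedom in choosing $t_v$ beyond getting valuation exactly $1$ on $r$.

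The main point to get right is the bookkeeping on valuations of the other factors $p_j(t_v)$: one must ensure that reducing a root of $p_{i_0}$ modulo $\pi_v$ does not accidentally land on a root of another $p_j$, which is exactly why the resultant condition is imposed and why only finitely many $v$ are excluded. Everything else is elementary: Lemma \ref{p} supplies the infinitude of places, and the parity argument via non-dyadic valuations supplies the non-square condition for free. I expect no serious obstacle; the only care needed is to phrase the finitely many exclusions cleanly so that the conclusion holds for \emph{all} but finitely many of the places produced by Lemma \ref{p}.
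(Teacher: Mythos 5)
Your proof is correct and follows essentially the same route as the paper: isolate an index $i_0$ with $e_{i_0}$ odd, invoke Lemma \ref{p} for $p_{i_0}$, force $w(p_j(t_w))=0$ for $j\neq i_0$ by a coprimality-mod-$\pi_w$ argument (you use the resultant of $p_{i_0}$ and $p_j$ being a $w$-unit, the paper uses a B\'ezout identity $a_jp_j+b_jp_{i_0}=1$ with $w$-integral coefficients --- the same idea), and conclude non-squareness from the odd valuation of $-p(t_w)\det(q)$. The non-dyadic restriction is unnecessary (an element of odd valuation is a non-square in any nonarchimedean local field), but harmless.
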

\begin{proof}

Assume  $e_{i_0}$ is odd for some
  $i_0 \in \{1,\cdots,s\}$. If $s=1$, the result immediately follows from Lemma \ref{p}.
  Assume $s>1$.
  
  For any $j\neq i_0$,  there are polynomials $a_j(t)$ and
$b_j(t)$ over $F$ such that
\begin{equation} \label{e} a_j(t)p_j(t)+b_j(t)p_{i_0}(t)=1 \end{equation}
holds.

  Let $S$ be a finite set of primes
such that each of the following conditions hold:

(i)  the coefficients of $q$ are integral away from $S$;

(ii) $ w(c)= w(\det(q))=0$ for all $w \not \in S$;

(iii) the coefficients of $a_j(t), b_j(t)$ for $j\neq i_0$ and
of $p_i(t)$ for $1\leq i\leq s$ are in $\frak o_w$ for all $w \not \in S$.

By applying Lemma \ref{p} to $p_{i_0}(t)$, we see that there exist infinitely many
primes $w \not \in S$ and $t_{w} \in \frak o_w$ such that
$w(p_{i_0}(t_{w}))=1 $.  By  equation (\ref{e}), one
has $w (p_j(t_{v}))=0$ for any $j\neq i_0$. This implies
$w(p(t_{w}))=e_{i_0}$ is odd. Therefore
$-p(t_{w})\cdot
\det(q)\not \in F_{w}^{ \times 2}. $
\end{proof}

\begin{prop}\label{threeHPWA}
   Let $F$ be a number field and $X$ be an $F$-variety defined by an equation
$$q(x,y,z) = p(t) $$
 where $q(x,y,z)$ is a non-degenerate quadratic
form   over $F$ and $p(t)$ is a nonzero polynomial in $F[t]$. 
Assume  $X_{smooth}(F_{v}) \neq \emptyset$ for each   place $v$ of $F$.
Then

(1)   $X_{smooth}(F)$ is Zariski-dense in $X$.

(2) $X_{smooth}$ satisfies weak approximation.

\end{prop}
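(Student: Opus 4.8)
The plan is to run the fibration $f\colon X_{smooth}\to{\bf A}^1_F$ given by $(x,y,z,t)\mapsto t$, whose fibre over $\alpha\in{\bf A}^1(F)$ with $p(\alpha)\neq 0$ is the smooth affine quadric $X_\alpha\colon q(x,y,z)=p(\alpha)$. The first point I would record is that such an $X_\alpha$, \emph{as soon as it has an $F$-point}, is $F$-rational: its smooth projective model $q(x,y,z)-p(\alpha)u^2=0$ in ${\bf P}^3_F$ is a quadric surface which, having an $F$-point, is $F$-birational to ${\bf P}^2_F$ by projection from that point, and $X_\alpha$ is a dense open subset of it. Hence, if $X_\alpha(F)\neq\emptyset$, then $X_\alpha$ satisfies weak approximation and carries a Zariski-dense set of $F$-points. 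So both (1) and (2) reduce to: for every finite set $T$ of places and every family $M_v\in X_{smooth}(F_v)$, $v\in T$, with prescribed $v$-adic neighbourhoods, there is $\alpha\in F$ with $p(\alpha)\neq 0$, with $X_\alpha(F_v)\neq\emptyset$ for \emph{all} places $v$, and with $X_\alpha(F_v)$ meeting the prescribed neighbourhood of $M_v$ for each $v\in T$.

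To produce $\alpha$, I would first enlarge $T$ to a finite set $S$ of places containing all archimedean places, all dyadic places, all places where the coefficients of $q$ are not $\frak o_v$-integral or where $v(\det q)\neq 0$, and all places dividing the leading coefficient $c$ of $p$; for $v\notin S$ the form $q$ is then integral over $\frak o_v$ with unit determinant and $v(c)=0$. Using the hypothesis $X_{smooth}(F_v)\neq\emptyset$ for all $v$, choose for each $v\in S\setminus T$ some $M_v\in X_{smooth}(F_v)$; then, since $X_{smooth}$ is smooth and geometrically integral (over $\bar F$ the equation becomes $xy=z^2+p(t)$, visibly irreducible) and $f^{-1}(W)$ is dense open, where $W\colon p(t)\neq 0$, replace each $M_v$, $v\in S$, by a nearby point of $f^{-1}(W)(F_v)$, so that $p(t(M_v))\neq 0$ and $f$ is smooth at $M_v$. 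By weak approximation on ${\bf A}^1_F$, pick $\alpha\in F$ close to $t(M_v)$ for every $v\in S$ and with $p(\alpha)\neq 0$.

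It remains to check $X_\alpha(F_v)\neq\emptyset$ for every $v$. For $v\in S$, $f$ is smooth at $M_v$, so the implicit function theorem yields a point of $X_\alpha(F_v)$ close to $M_v$ (inside the prescribed neighbourhood when $v\in T$). For $v\notin S$, if $v(\alpha)\geq 0$ then $p(\alpha)\in\frak o_v$ and Lemma~\ref{good} gives a point of $X_\alpha(F_v)$; if $v(\alpha)<0$ then $v(p(\alpha))=(\deg p)\,v(\alpha)<0$, and either this valuation is even, in which case rescaling the coordinates reduces to $q=(\text{unit})$, solvable by Lemma~\ref{good}, or it is odd, in which case the quaternary form $q\perp\langle-p(\alpha)\rangle$ has determinant of odd valuation, hence nonsquare discriminant over $F_v$, hence is isotropic, so $q(x,y,z)=p(\alpha)$ is solvable over $F_v$ as well. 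Thus $X_\alpha$ is everywhere locally soluble, its projective model satisfies the Hasse principle by Hasse--Minkowski, and the resulting $F$-point may be taken with $u\neq 0$ (the $F$-points of a quadric with an $F$-point are Zariski-dense), so $X_\alpha(F)\neq\emptyset$. Then $X_\alpha$ is $F$-rational, satisfies weak approximation, and a point $N\in X_\alpha(F)\subset X_{smooth}(F)$ approximating the points of $X_\alpha(F_v)$ found above for $v\in T$ proves (2); taking $T=\{v_0\}$ shows $X_{smooth}(F)$ is dense in $X_{smooth}(F_{v_0})$ for the $v_0$-adic topology, hence Zariski-dense in $X$, which is (1). (Both assertions are also instances of Prop.~3, p.~66 of \cite{CTSaSD}.)

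The main obstacle is precisely the verification that the chosen fibre $X_\alpha$ is locally soluble at \emph{every} place, including the auxiliary ``bad'' places of $S\setminus T$ and the places outside $S$ where one has no control on the $v$-adic size of $\alpha$; this is dealt with by the clearing-of-denominators/rescaling device combined with the elementary local statements (Lemma~\ref{good} and the discriminant criterion for isotropy of a quaternary form over $F_v$), and it is exactly at the places of $S\setminus T$ that the everywhere-local-solubility hypothesis on $X_{smooth}$ is genuinely used.
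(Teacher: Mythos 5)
Your argument is correct, but it takes a different route from the paper, whose proof of this proposition is a one-line citation of \cite{CTSaSD} (Prop.~3 / Thm.~3.10, p.~66). You instead replay the underlying fibration argument in full: reduce to finding $\alpha\in F$ with $p(\alpha)\neq 0$ such that the fibre $X_\alpha: q=p(\alpha)$ is everywhere locally soluble and passes close to the prescribed local points, then invoke Hasse--Minkowski and the $F$-rationality of a pointed quadric surface to get weak approximation on that fibre. The structure is sound, and you correctly identify the real content, namely local solubility of $X_\alpha$ at the uncontrolled places: the implicit function theorem at $v\in S$ (where the everywhere-local-solubility hypothesis is genuinely used at the auxiliary places of $S\setminus T$), Lemma~\ref{good} at good places with $v(\alpha)\geq 0$, and the rescaling/odd-discriminant dichotomy when $v(\alpha)<0$. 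One small repair: for the estimates $p(\alpha)\in\frak o_v$ when $v(\alpha)\geq 0$ and $v(p(\alpha))=(\deg p)\,v(\alpha)$ when $v(\alpha)<0$, you need \emph{all} coefficients of $p$ to lie in $\frak o_v$, not just the leading one to be a unit; so $S$ should also contain the places where some coefficient of $p$ is non-integral. This is a routine enlargement and does not affect the argument. What your approach buys is self-containedness (and it makes visible exactly where the hypothesis at every place is needed); what the paper's citation buys is brevity and a more general statement covering arbitrary conic and quadric bundles over the line.
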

\begin{proof}
This is a special case of Thm. 3.10, p. 66 of \cite{CTSaSD}.
\end{proof}

\begin{thm}\label{newmain}
Let $F$ be a number field. Let $ U \subset \tilde{X}$ be  as above.
Assume $U(\A_{F})\neq \emptyset$.
Let  $S$  be a finite subset of $\Omega_F$ which contains a place $v_0$ such that the quadratic form $q(x,y,z)$  is isotropic over $F_{v_0}$. 
Then strong approximation off $S$ with Brauer-Manin condition holds for any open set $V$ with
$U \subset V \subset \tilde{X}$, in particular for $X_{smooth}$.
\end{thm}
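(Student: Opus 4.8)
The plan is to reduce the statement to the fibration $\tilde{X} \to \mathbf{A}^1_F = \Spec F[t]$ given by the coordinate $t$, and to combine this with the strong-approximation-with-Brauer-Manin result for the individual affine quadrics $q(x,y,z)=p(\alpha)$ established in Proposition \ref{deCTX}, the Brauer group computations of \S\ref{computBrauer}, and Harari's formal lemma. By Propositions \ref{strongBMSS'} and \ref{strongBMUX} (the latter applies because, by Propositions \ref{bg}, \ref{bexce}, \ref{cexp}, the group $\br(\tilde{X})/\br(F)$ is finite, of order at most $2$, and similarly for any $V$ with $U \subset V \subset \tilde{X}$ by purity), it suffices to prove the theorem for $\tilde{X}$ itself with $S = \{v_0\}$. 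So fix a finite set $T \supset S$ of places, integral models $\mathcal{U} \subset \tilde{\mathcal{X}}$ over $\frak o_T$ with the usual good properties (smoothness, geometrically integral fibres of $\tilde{\mathcal{X}} \to \Spec F[t] \to \Spec \frak o_T$ where defined, surjectivity of $\mathcal{U}(\frak o_v) \to \mathbf{A}^1(\frak o_v)$ for $v \notin T$, and $B \subset \br(\tilde{\mathcal{X}})$ for a finite group $B$ generating $\br(\tilde{X})/\br(F)$), and a point $\{M_v\} \in [\prod_{v \in T} W_v \times \prod_{v \notin T} \tilde{\mathcal{X}}(\frak o_v)]^{\br(\tilde{X})}$, where the $W_v \subset \tilde{X}(F_v)$ are prescribed open sets for $v \in T \setminus S$ and $W_{v_0} = \tilde{X}(F_{v_0})$. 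As in Proposition \ref{fib}, since $\tilde{X}$ is smooth we may move each $M_v$ (for $v \in T$) into $U(F_v)$, and for $v \notin T$ into $\mathcal{U}(\frak o_v)$, without changing the values of the finitely many elements of $B$.

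The heart of the argument is to choose a good rational value $t = \alpha \in F$ of the base coordinate so that the fibre $X_\alpha : q(x,y,z) = p(\alpha)$, which is a smooth affine quadric (for $\alpha$ avoiding the roots of $p$), \emph{(a)} has the right local points near the $M_v$ for $v \in T$, \emph{(b)} has $\frak o_v$-points for $v \notin T$ via the model, and \emph{(c)} has \emph{no} Brauer-Manin obstruction to strong approximation off $\{v_0\}$ coming from $\br(X_\alpha)$ — so that Proposition \ref{deCTX} applies to $X_\alpha$ and produces the desired $F$-point of $X_\alpha \subset \tilde{X}$. The subtlety in (c) is that $\br(X_\alpha)/\br(F)$ may be $\Z/2$ (when $-p(\alpha)\det(q)$ is a nonsquare), and we must arrange that the class $\xi_\alpha$ generating it is \emph{killed} by the adelic point we feed to Proposition \ref{deCTX}. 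Here Proposition \ref{cexp}(c), resp. \ref{bexcebis}(b), is crucial: when $\br(\tilde{X})/\br(F) = \Z/2$ its generator $\beta$ restricts (evaluates) onto the generator of $\br(X_\alpha)/\br(F)$, so controlling $\beta$ on the global adelic point controls $\xi_\alpha$ fibrewise. This is where Harari's formal lemma (in the form \cite[Th\'eor\`eme 1.4]{CT}, exactly as used in the proof of Proposition \ref{strongBMUX}) enters: after possibly enlarging $T$ by an auxiliary finite set $T_1$ of places and adjusting the local points there, we may assume $\sum_{v} \beta(M_v) = 0$ and more generally the global sum of each element of $B$ vanishes, which is precisely the input allowing the Brauer-Manin-admissible adelic point to survive restriction to a suitable fibre.

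The selection of $\alpha$ then proceeds by strong approximation on the base $\mathbf{A}^1_F$ (trivial — the Chinese Remainder Theorem, with the place $v_0$ omitted): impose that $\alpha$ be $v$-adically close to $t(M_v)$ for $v \in T \cup T_1$ (close enough that the quadric $X_\alpha$ has local points near the images of the $M_v$ and, by continuity of evaluation of Brauer classes, that the values of $\beta$, hence of $\xi_\alpha$, are unchanged), and that $\alpha \in \frak o_v$ with $p(\alpha)$ in the appropriate range for $v \notin T \cup T_1$ so that $\mathcal{Z} := \tilde{\mathcal{X}}_\alpha$ has $\frak o_v$-points. If $p(t)$ is a nonsquare in $\ov{F}(t)$ (the generic case), Proposition \ref{bg} gives $\br(U) = \br(F)$, hence $\br(\tilde{X}) = \br(F)$, there is simply no obstruction at all, and one may even use Lemma \ref{deci}/\ref{p} to pick $\alpha$ with $-p(\alpha)\det(q)$ a nonsquare at some controlled place to ensure $\br(X_\alpha)/\br(F) = \Z/2$ is handled or, more simply, pick $\alpha$ so that $X_\alpha(\A_F)^{\br(X_\alpha)} \neq \emptyset$ follows from the surviving adelic point. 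In the exceptional case $p(t) = c\,r(t)^2$ one invokes Proposition \ref{cexp} and the explicit generator of Proposition \ref{bexp} to match up $\beta$ with $\xi_\alpha$. Finally, by Proposition \ref{eichlerkneser} (Eichler--Kneser, valid since $n=3 < 4$ requires the $n\geq 3$ version from \cite{CTX}, i.e. Proposition \ref{deCTX}) applied at the place $v_0$ where $q$ — and hence $q$ over $F_{v_0}$, hence the fibre $X_\alpha$ — is isotropic, $X_\alpha$ satisfies strong approximation with Brauer-Manin off $\{v_0\}$, so the surviving adelic point on $X_\alpha$ (with the $v_0$-component discarded) is approximated by an $F$-point; this $F$-point, viewed in $\tilde{X}(F)$, lies in the originally prescribed open set, completing the proof. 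The main obstacle is step (c): ensuring that the fibre $X_\alpha$ inherits a \emph{Brauer--Manin-unobstructed} adelic point from the one on $\tilde{X}$, which forces the combined use of the fibrewise surjectivity of Brauer-group restriction (Propositions \ref{bexcebis}, \ref{cexp}) together with Harari's formal lemma to first arrange global vanishing of the relevant Brauer evaluations — this coordination between the algebraic computations of \S\ref{computBrauer} and the adelic manipulation is the technical core.
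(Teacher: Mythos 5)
Your overall architecture --- reduce to $U$ via Propositions \ref{strongBMSS'} and \ref{strongBMUX}, fibre over $\mathbf{A}^1_F$ by the coordinate $t$, choose a value $t_0$ close to the $t$-coordinates of the given adelic point, and apply Proposition \ref{deCTX} to the fibre $U_{t_0}$ --- is exactly the paper's. But the technical core, which you correctly single out as your step (c), is not carried out, and the tool you propose for it does not apply. Harari's formal lemma concerns classes in the Brauer group of an open subset of the \emph{total space}; here the starting adelic point already lies in the Brauer--Manin set of $U$, so $\sum_v\xi(M_v)=0$ holds for the generator $\xi$ of $\br(U)/\br(F)$ with no formal lemma needed (its only role is inside Proposition \ref{strongBMUX}, which you already invoke for the reduction from $V$ to $U$). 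The class that actually threatens the argument is the generator $\rho$ of $\br(U_{t_0})/\br(F)$, which in general is \emph{not} the restriction of anything in $\br(U)$: in case (i) of Theorem \ref{newmaindetail} (some $e_i$ odd) one has $\br(U)/\br(F)=0$ while $\br(U_{t_0})/\br(F)=\Z/2$ for the relevant $t_0$, so your claim that ``there is simply no obstruction at all'' is false for the fibre, and Harari's lemma, applied before $t_0$ is even chosen, cannot see $\rho$.

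The missing ingredients are the two local ``two-values'' lemmas. In case (i), one uses Lemma \ref{deci} to produce an auxiliary place $w\notin T$ and to force $w(p(t_0))$ odd with $-p(t_0)\det(q)\notin F_w^{\times 2}$; Lemma \ref{prex} then shows that $\rho$ takes both values on ${\bf U}_{t_0}(\frak o_w)$, so the $w$-component of the fibre adelic point can be flipped to make $\sum_v\rho(P_v)=0$. In cases (iii)--(iv) (all $e_i$ even, $d\notin F^{\times 2}$), the global $\xi$ does restrict to the fibre generator (Proposition \ref{bexcebis}), but the fibre adelic point $\{P_v\}$ is uncontrolled at $v_0$, so $\sum_v\xi(P_v)=\xi(P_{v_0})-\xi(M_{v_0})$, and one must invoke Lemma \ref{infinit} --- this is precisely where the isotropy of $q$ at $v_0$ is consumed --- to see that $\xi$ takes both values on $U_{t_0}(F_{v_0})$ and the $v_0$-component can be corrected. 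Without these two adjustments the chosen fibre need not carry a Brauer--Manin-unobstructed adelic point, and Proposition \ref{deCTX} cannot be applied; everything else in your outline is in place.
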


Since $\tilde{X}$ is smooth and geometrically integral,  
the hypotheses $U(\A_{F})\neq \emptyset$, 
$X_{smooth}(\A_{F})\neq \emptyset$  and $\tilde{X} (\A_{F})\neq \emptyset$
are all equivalent.

Taking into account the isomorphism  $\br(X_{smooth}) \oi \br(U)$, 
  the finiteness of  $\br(U)/\br(F)$ (\S 5) and Proposition \ref{strongBMUX},
 this theorem is an immediate consequence of the following  more  precise statement.

\begin{thm}\label{newmaindetail}
Let $F$ be a number field. Let  $p(t)=c.p_1(t)^{e_1}\dots p_s(t)^{e_s} $, $q(x,y,z)$,
$X$, $U$ and $\tilde{X}$ be as above. 
Let $d=-c.det(q)$. 
 Assume $U(\A_{F})\neq \emptyset$.
Let  $S$  be a finite subset of $\Omega_F$ 
which contains a place $v_0$ such that the quadratic form $q(x,y,z)$  is isotropic over $F_{v_0}$. 
 Assume $U(\A_{F})\neq \emptyset$. 
 
 Then $U(F) \neq \emptyset$ 
 is Zariski dense in $U$.
 
 (i) If at least one $e_{i}$ is odd, then $  \br(\tilde{X})/\br(F) = \br(U)/\br(F)=0$
 and strong approximation off $S$ holds for $U$ and for  $\tilde{X}$.
 
 (ii) If all $e_{i}$ are even and $d \in F^{\times 2}$, then
 $  \br(\tilde{X})/\br(F) = \br(U)/\br(F)=0$
  and strong approximation off $S$ holds for $U$ and  for $\tilde{X}$.
 
 (iii) If all $e_{i}$ are even
 and there exists $i$
 such that $d \notin F_{i}^{\times 2}$, then $  \br(\tilde{X})/\br(F)=0$,
 $\br(U)/\br(F)=\Z/2$,   
  strong approximation off $S$ with Brauer-Manin condition holds for $U$
  and for any open set $V$ with $U \subset V \subset \tilde{X}$,
  and strong approximation holds for any such open set $V$ which satisfies
  $\br(\tilde{X}) \oi \br(V)$.

 (iv) If all $e_{i}$ are even, $d \notin F^{\times 2}$, and for all $i$,
 $d \in  F_{i}^{\times 2}$, then $  \br(\tilde{X})/\br(F) = \br(U)/\br(F)=\Z/2$,
 and strong approximation off $S$ with Brauer-Manin condition holds for $U$
 and for $\tilde{X}$.
 
 (v) Strong approximation off $S$ fails for $U$, resp. for $\tilde{X}$,
 if and only if the following two conditions simultaneously hold:
 
 (a) $\br(U)/\br(F)=\Z/2$, resp. $\br(\tilde{X})/\br(F)=\Z/2$;
 
 (b)  $d$ is a square in $F_v$  for each  finite place
$v\in S$ and also  for each real place $v \in S$  such that 
either  $q(x,y,z)$ is  isotropic over $F_v$  or r(t) has a root over $F_v$.

\end{thm}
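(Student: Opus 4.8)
The plan is to prove Theorem~\ref{newmaindetail} by combining the Brauer group computations of \S\ref{computBrauer} with the fibration method of \S\ref{general} and the strong approximation result for affine quadrics (Proposition~\ref{deCTX}). First I would dispose of the existence and density of $U(F)$: since $q$ is isotropic at $v_0$ and $U(\A_F) \neq \emptyset$, the fibration $U \to {\bf A}^1_F$ over the complement $W$ of $\{p(t)=0\}$ has smooth geometrically integral fibres which are affine quadrics in three variables; choosing a suitable $F$-rational value of $t$ approximating the adelic point and applying Proposition~\ref{eichlerkneser} (or Proposition~\ref{deCTX}) on that fibre gives an $F$-point, and Zariski density follows from Proposition~\ref{threeHPWA}(1) together with Proposition~\ref{fourfinal}(1) (noting $X_{smooth}(F_v) \neq \emptyset$ is implied by $U(\A_F)\neq\emptyset$ and smoothness). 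The Brauer group statements in (i)--(iv) are then just a transcription of Proposition~\ref{bg}, Proposition~\ref{bexce}, Proposition~\ref{cexp} and Remark~\ref{most}(ii): over a number field $H^3_{\et}(F,{\bf G}_m)=0$, so the injectivity hypothesis in Proposition~\ref{bexce} and Proposition~\ref{cexp} is automatic, which gives $\br(U)/\br(F)=0$ in cases (i) and (ii), $\br(U)/\br(F)=\Z/2$ with $\br(\tilde X)/\br(F)=0$ in case (iii), and $\br(U)/\br(F)=\br(\tilde X)/\br(F)=\Z/2$ in case (iv).

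Next I would establish the positive strong approximation statements. In cases (i) and (ii) there is no Brauer obstruction, so I want plain strong approximation off $S$; in cases (iii) and (iv) I want strong approximation with Brauer--Manin condition. The common engine is Proposition~\ref{deCTX}: on the generic fibre (and on every fibre above an $F$-point of $W$) of $f: U \to {\bf A}^1_F$, strong approximation with Brauer--Manin obstruction off $S$ holds because $q$ is isotropic at $v_0 \in S$. The strategy is the usual two-step fibration argument: given an adelic point of $U$ (or $V$, or $\tilde X$) orthogonal to $\br(U)$ (resp. $\br(V)$), and finitely many local conditions off $S$, first move in the base ${\bf A}^1_F$ — which satisfies strong approximation off $S$ — to find a rational value $t=\theta \in F$, $p(\theta)\neq 0$, close to the given adelic value of $t$ at the relevant places and integral elsewhere, such that the fibre $X_\theta: q(x,y,z)=p(\theta)$ still carries an adelic point compatible with the local conditions. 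The key subtlety, handled by Proposition~\ref{bexcebis}(b) and Proposition~\ref{cexp}(c), is that in the Brauer case the generator of $\br(U)/\br(F)$ restricts surjectively onto $\br(X_\theta)/\br(F)$, so orthogonality of the adelic point to $\br(U)$ forces orthogonality of its restriction to $\br(X_\theta)$; then Proposition~\ref{deCTX} applied to the fibre $X_\theta$ produces the desired rational point. Passing from $U$ to an arbitrary $V$ with $U\subset V\subset\tilde X$ uses Proposition~\ref{strongUX} (and, in case (iii), the observation that if $\br(\tilde X)\oi\br(V)$ then $\br(V)/\br(F)=0$, so the Brauer--Manin condition on $V$ is vacuous, giving honest strong approximation).

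For part (v) — the precise failure criterion — the strategy is first to observe that whenever $\br(U)/\br(F)=0$ (cases (i),(ii)) strong approximation holds by (i),(ii), so failure requires $\br(U)/\br(F)=\Z/2$, i.e. condition (a); similarly for $\tilde X$. Assuming (a), I would analyze the image of the evaluation map attached to the generator $B$ of $\br(U)/\br(F)$ (made explicit in Proposition~\ref{bexp} as $(\alpha x+\beta y+\gamma z+\delta r(t),d)$, in case all $e_i$ are even with $d\notin F^{\times2}$). Strong approximation off $S$ holds if and only if the Brauer--Manin condition off $S$ is vacuous, i.e. if and only if for the generator $\xi$ the local sum $\sum_{v\in S}\inv_v\xi(x_v)$ can realize \emph{every} value in $\frac12\Z/\Z$ as $\{x_v\}$ ranges over $\prod_{v\in S}U(F_v)$ — equivalently, $\xi$ takes two distinct values on $U(F_v)$ for at least one $v\in S$. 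By Lemma~\ref{prex} (for finite non-dyadic $v$) and Lemma~\ref{infinit} (for archimedean $v$ and, after absorbing units, for dyadic $v$), $\xi$ is \emph{constant} on the relevant local points at $v\in S$ precisely when $d$ is a square in $F_v$ ($v$ finite) or when $v$ is real with $q$ anisotropic over $F_v$ and $r(t)$ has no $F_v$-root; conversely $\xi$ takes two values at $v$ exactly when $d\notin F_v^{\times2}$ at a finite $v$, or at a real $v$ where $q$ is isotropic or $r(t)$ has a root. Hence the obstruction is nontrivial (strong approximation fails) iff $\xi$ is constant at every $v\in S$, which is condition (b). The main obstacle I anticipate is exactly this local computation at the places of $S$: one must carefully match the \emph{global} generator $B$ with the \emph{local} generator of $\br(Y)/\br(F_v)$ on each fibre (using Proposition~\ref{computbr}(i),(ii)), control the value set on the \emph{integral} points ${\bf Y}^\ast(\frak o_v)$ versus all local points, and treat the dyadic and archimedean places where Lemma~\ref{prex} does not directly apply — this bookkeeping, rather than any single hard idea, is where the real work lies.
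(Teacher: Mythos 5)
Your overall architecture — fibre over ${\bf A}^1_F$, use Proposition~\ref{deCTX} on the conic-fibre quadrics, read off the Brauer groups from \S\ref{computBrauer}, and analyse local constancy of the generator at the places of $S$ for part (v) — is the same as the paper's. But there is a genuine gap in your treatment of case (i). You dismiss cases (i) and (ii) with ``there is no Brauer obstruction, so I want plain strong approximation,'' and rely on the ``common engine'' of Proposition~\ref{deCTX} applied to the fibre $X_\theta$. In case (i), however, $\br(U)/\br(F)=0$ while the fibre $U_{t_0}$ generically has $\br(U_{t_0})/\br(F)=\Z/2$ (whenever $-p(t_0)\det(q)\notin F^{\times 2}$, which you cannot avoid by choosing $t_0$). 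Proposition~\ref{deCTX} only produces a rational point on the fibre if the adelic point you have built on the fibre is orthogonal to $\br(U_{t_0})$, and there is no global class on $U$ forcing this. The paper's resolution is precisely the pair Lemma~\ref{deci} + Lemma~\ref{prex}: one chooses an auxiliary place $w\notin T$ and arranges $t_0$ so that $w(p(t_0))$ is odd, whence the generator of $\br(U_{t_0})/\br(F)$ takes \emph{both} values on the integral points ${\bf U}_{t_0}(\frak o_w)$, so the total invariant sum can be corrected at $w$ without breaking the integrality conditions off $T$. This mechanism is absent from your proposal, and without it the argument for (i) does not close.

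Two secondary points. First, in cases (iii)--(iv) your claim that ``orthogonality of the adelic point to $\br(U)$ forces orthogonality of its restriction to $\br(X_\theta)$'' is not literally available: the new adelic point on the fibre is not a restriction of the old one — it agrees with it (up to $\xi$-value) at $v\in T\setminus\{v_0\}$ and at $v\notin T$, but at $v_0$ it is an arbitrary fibre point, so the sum of invariants comes out to $\xi(P_{v_0})-\xi(M_{v_0})$, which must be corrected using Lemma~\ref{infinit} and the isotropy of $q$ at $v_0$. Second, in part (v) the direction ``(a) and (b) imply failure'' requires actually exhibiting an adelic point off $S$ whose invariant sum cannot be cancelled; constancy of $\xi$ at every $v\in S$ is not by itself enough — one again needs an auxiliary place $w\notin S$ with $d\notin F_w^{\times 2}$ where $\xi$ takes two values on a fibre over a rational point (Proposition~\ref{bexcebis} and Lemma~\ref{infinit}), as the paper does. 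These are repairable with the same auxiliary-place technique, but they are exactly where the real work of the proof lies.
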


\begin{proof}

By Proposition \ref{threeHPWA},  $U(F)\neq \emptyset$ and $U(F)$ is Zariski dense in $U$.
The various values of $\br(U)$ and $\br(X)$ have been computed in \S  \ref{computBrauer}.
By Proposition  \ref{strongUX} and Proposition \ref{strongBMUX}, to prove (i) to (iv), it is  enough
to prove the
strong approximation statements (with Brauer-Manin obstruction) for $U$.

We fix a finite set $T$ of places, which   contains $S$, the infinite primes,
the dyadic primes and all the finite places $v$ where $q(x,y,z)$ has bad reduction.
We also assume that $p(t)$ has coefficients in $\frak o_{T}$ and that its leading coefficient $c$
is invertible in $\frak o_{T}$. We denote by $\bf X$ the $\frak o_{T}$-scheme given by
$$q(x,y,z)=p(t).$$
We let ${\bf U} \subset {\bf X}$ be the complement of the closed set defined by the ideal
$(x,y,z)$. 
We may extend $T$ so that there is a smooth integral  $\frak o_{T}$-scheme
$\bf{\tilde X}$ equipped with a proper birational $\frak o_{T}$-morphism $\bf{\tilde X} \to {\bf X}$ extending 
$\pi: \tilde{X} \to X$.

For any $v\notin T$,
 ${\bf U}({\frak o}_{v}) $
 is the set of points
$(x_{v},y_{v},z_{v}, t_{v})$ with all coordinates in ${\frak o}_{v}$,
$q(x_{v},y_{v},z_{v})=p(t_{v})$ 
 and one of $(x_{v},y_{v},z_{v})$ a unit.
 By Lemma \ref{good},  given any $t_{v} \in \frak o_{v}$, this set is not empty.

To prove the statements (i) to (iv), after possibly increasing $T$, we have to prove that
for any such finite set $T$ containing $S$,
a nonempty open set of $U(\A_{F})$ of the shape
$$ W_{U} = [\prod_{v\in S} U(F_{v}) \times \prod_{v \in T\setminus S} U_{v} \times
 \prod_{v \notin T} {\bf U}(\frak o_{v})]^{\br(U)}$$
with $U_{v} $ open in  $U(F_{v})$,
contains a point in $U(F)$.

Given $t_{0} \in \frak o_{T}={\bf A}^1({\frak o}_{T})$ with $p(t_{0})\neq 0$,
we let ${\bf U}_{t_{0}}/\Spec(\frak o_{T})$ 
be    the fibre of  ${\bf U}/{\bf A}^1_{{\frak o}_{T}}$ above $t_{0}$.
This is the $\frak o_{T}$-scheme defined by
$q(x,y,z)=p(t_{0}).$
We let $U_{t_{0}}={\bf U}_{t_{0}} \times_
{  \frak{o}_{T}   }    F$. 

It is enough to show that in each of the cases under consideration:

{\it There
exists $t_{0} \in  \frak o_{T}$ such that the set
$$[\prod_{v\in S} U_{t_{0}}(F_{v}) \times \prod_{v \in T\setminus S} U_{v} \cap U_{t_{0}}(F_{v}) \times \prod_{v \notin T} {\bf U}_{t_{0}}(\frak o_{v})]^{\br(U_{t_{0}})  }$$
is nonempty.}

Indeed, Proposition \ref{deCTX}   implies that  such a nonempty set  contains  an $F$-rational point.

 We have  $ \br(U)/\br(F) \subset \Z/2$.
If  $\br(U)/\br(F)$ is nonzero, we may represent the group by an element  $\xi$ of
order   $2$  in    $\br(U)$.
To prove the result, we may extend $T$. After doing so,  we may assume that $\xi$ vanishes identically on
each  ${\bf U}(\frak o_{v})$ for $v \notin T$.

We start with a
  point $\{M_{v}\}= \{(x_{v},y_{v},z_{z},t_{v})\}_{v\in \Omega_{F}}$
in $W_{U}$ 
such that $p(t_{v}) \neq 0$ for each $v \in \Omega_{F}$.

We have
$$ \sum_{v}\xi(M_{v})=0 \in \Z/2.$$

\medskip

In case (i), we choose a $w \notin T$ and a $t'_{w} \in \frak o_{w}$
with $w(p(t'_{w}))$ odd and  $-p(t'_{w}). det(q) \notin F_{w}^{\times 2}$.
The existence of such $w,t'_{w}$ is  guaranteed by Lemma 
\ref{deci}.

\medskip

Using the strong approximation theorem, we find a $t_{0} \in \frak o_{T}$
which is very close to each $t_{v}$ for $v \in T \setminus \{v_{0}\}$
and is also very close to $t'_{w}$ in  case (i).

  By Lemma \ref{good}, as recalled above, for each $v \notin S$, the projection
${\bf U}(\frak o_{v}) \to {\bf A}^1(\frak o_{v})$ is onto.
By assumption, $q$ is isotropic at $v_{0} \in S$,
hence $U(F_{v_{0}}) \to {\bf A}^1(F_{v_{0}})$ is onto.

Combining this with the implicit function theorem,
we find an ad\`ele $\{P_{v}\} \in  
 U_{t_{0}}  (\A_{F})  = {\tilde X}_{t_{0}}(\A_{F}) $
with the following properties:

$\bullet$ For $v \in T \setminus \{v_{0}\}$, $P_{v}$ is very close to $M_{v}$ in $U(F_{v})$, 
hence belongs to 
$U_{v} \cap {\bf U}_{t_{0}}(F_{v})$
 for $v \in T  \setminus S
$. Moreover $\xi(M_{v})=\xi(P_{v})$.

$\bullet$   For $v \notin T$, $P_{v} \in {\bf U}_{t_{0}}(\frak o_{v}) 
$, hence $\xi(P_{v})=0=\xi(M_{v})$.

 By the Hasse principle, there  exists an $F$-point
 on the affine $F$-quadric $U_{t_{0}}={\tilde X}_{t_{0}}$.

\medskip

Consider case (i). By the definition of $w$, $w(p(t_{0}))$ is odd, 
$-p(t_{0}). det(q) \notin F_{w}^{\times 2}$,
hence $-p(t_{0}). det(q) \notin F^{\times 2}$, thus
$\Z/2= \br(U_{t_{0}})/\br(F) \simeq \br(U_{t_{0},F_{w}})/\br(F_{w})$ by Proposition \ref{computbr}.
Let $\rho \in  \br(U_{t_{0}})$ be an element of order 2 which generates these groups.

If $\sum_{v} \rho(P_{v})=0$, the ad\`ele 
 $\{P_{v}\} \in  
 U_{t_{0}}   (\A_{F})$ belongs to the Brauer-Manin set of $U_{t_{0}}$.

Suppose $\sum_{v} \rho(P_{v})=1/2$. By Lemma \ref{prex} , $\rho$ takes two distinct values
on ${\bf U}_{t_{0}}(\frak o_{w})$.  We may thus choose a new point $P_{w} \in {\bf U}_{t_{0}}(\frak o_{w})$
such that now $\sum_{v} \rho(P_{v})=0$, that is the new ad\`ele 
$\{P_{v}\} \in  
 U_{t_{0}}   (\A_{F})$ belongs to the Brauer-Manin set of $U_{t_{0}}$, which completes the proof in this
 case.

\medskip

Consider case (ii).
In this case $-c.det(q).p(t_{0})^2 \in F^{\times 2}$, hence $\br(U_{t_{0}})/\br(F)=0$
by Proposition \ref{computbr}.  Thus the ad\`ele $\{P_{v}\} \in  
 U_{t_{0}}   (\A_{F})$ is trivially in the Brauer-Manin set of $U_{t_{0}}$,
 which completes the proof in this
 case.

\medskip

Let us consider (iii) and (iv). 
In these cases,  
 $-c.det(q)  \notin F^{\times 2}$, hence $-det(q).p(t) \notin  F(t)^{\times 2}$
 and $-det(q).p(t_{0}) \notin  F^{\times 2}$
 for any $t_{0} \in F$.  
 We have $\br(U)/\br(F)=\Z/2$ and $\br(U_{t_{0}})/\br(F)=\Z/2$ 
for any $t_{0}$ such that $p(t_{0}) \neq 0$.
The element $\xi \in \br(U)$ has now exact order 2. It  generates  $\br(U)/\br(F)$.
The restriction of this element to $\br(U_{t_{0}})/\br(F)=\Z/2$ is the generator
of that group (Prop. \ref{bexce} and \ref{bexcebis}).

By hypothesis,  $\sum_{v} \xi(M_{v})=0.$
We then have
$$ \sum_{v} \xi(P_{v})=  \xi(P_{v_{0}}) + \sum_{v \in T \setminus \{v_{0}\}} \xi(P_{v})
 =\xi(P_{v_{0}}) + \sum_{v \in T \setminus \{v_{0}\}} \xi(M_{v})= \xi(P_{v_{0}}) - \xi(M_{v_{0}}).$$
 
 If $d \in F_{v_{0}}^{\times 2}$, then  
 $\br(U_{F_{v_{0}}})/\br(F_{v_{0}})=0$ (Prop. \ref{bexce}) and
  $\xi(P_{v_{0}}) - \xi(M_{v_{0}})=0$. We thus get    $\sum_{v} \xi(P_{v})=0$.
  The ad\`ele $\{P_{v}\}$ is   in the
 Brauer-Manin set of $U_{t_{0}}$.

  Assume $d \notin F_{v_{0}}^{\times 2}$. Then 
  $\br(U_{F_{v_{0}}})/\br(F_{v_{0}}) \oi \br(U_{t_{0},F_{v_{0}}})/\br(F_{v_{0}}) =\Z/2$
  (Prop. \ref{bexce} and \ref{bexcebis}).
 The image of $\xi$ in $\br(U_{t_{0},F_{v_{0}}})/\br(F_{v_{0}})$ generates this group.
By Lemma \ref{infinit},
the class $\xi$ takes two distinct values on $U_{t_{0}}(F_{v_{0}})$.
This   holds whether $v_{0}$ is real or not,  because  by assumption $q$ is  isotropic at $v_{0}$.
We may then change
 $P_{v_{0}} \in 
 U_{t_{0}} (F_{v_{0}})  $ in order to ensure $\xi(P_{v_{0}}) - \xi(M_{v_{0}})=0$,
 which yields $ \sum_{v} \xi(P_{v})=0$. The ad\`ele $\{P_{v}\}$ is   in the
 Brauer-Manin set of $U_{t_{0}}$.
 
 This proves (iii) and (iv) for $U$.

  \medskip
 
It remains to establish (v). 

\medskip
 
Assume (a) and (b). Under (a), all $e_{i}$ are even and $d \notin F^{\times 2}$.
We let $\xi$ be an element of  exact order 2 in $\br(U)$, resp. $\br({\tilde X})$ which generates
$\br(U)/\br(F)$, resp. $\br({\tilde X})/\br(F)$.
 Under (b), at each finite place $v \in S$, by Proposition \ref{bexce} we have
$\xi_{F_{v}} \in \br(F_{v})$, hence $\xi$ is constant on $U(F_{v})$, resp. ${\tilde X}(F_{v})$.
The same holds at a real place $v $ such that $d \in F_{v}^{\times 2}$.
 At a real place $v \in S$ such that $d \notin F_{v}^{\times 2}$,
the form $q(x,y,z)$ is anisotropic over $F_{v}$ and $r(t)$ has no real root.
At such $v$, the equation after suitable transformation reads $x^2+y^2+z^2=(r(t))^2$
and $U(F_{v})=U(\R)$ is connected. Then $\xi$ is  constant on $U(\R)$.

Since $d \notin F^{\times 2}$ there are infinitely many finite nondyadic places $w \notin S$
such that $d \notin F_w^{\times 2}$.
Let $M$ be a point of $U(F)$, resp. $\tilde{X}(F)$,   with $p(t(M))\neq 0$.
Since $d \notin F^{\times 2}$ there are infinitely many finite places 
$w \notin S$ such that $d \notin F_w^{\times 2}$.  At such a place $w$, $\xi$
takes two distinct values on $U_{t(M)}(F_{w})={\tilde X}_{t(M)}(F_{w})$ (use Proposition \ref{bexcebis} and Lemma \ref{infinit}).
Pick   $P_{w} \in U_{t(M)}(F_{w})$ with $\xi(P_{w}) \neq \xi(M)_{F_{w}} \in \Z/2$.
If we let $\{P_{v}\}$ be the ad\`ele of $U$, resp. ${\tilde X}$ with $P_{v}=M$ for $v \neq w$
and $P_{w}$ as just chosen, then $\sum_{v}\xi(P_{v}) \neq 0$, and this ad\`ele lies in
 an open set of the shape
$\prod_{v\in S} U(F_{v}) \times \prod_{v \in T \setminus S} U_{v} \times \prod_{v\notin T} {\bf U}(\frak o_{v})$,
resp.
$\prod_{v\in S} {\tilde X}(F_{v}) \times \prod_{v \in T \setminus S} U_{v} \times \prod_{v\notin T} {\bf {\tilde X}}(\frak o_{v})$,
which contains no diagonal image of $U(F)$, resp. ${\tilde X}(F)$.
 Strong approximation off $S$ therefore fails for $U$, resp. $\tilde X$.
 
 \medskip
 
 Suppose either (a) or (b) fails. Let us prove that strong approximation holds off $S$.
  If (a) fails, then $\br(U)/\br(F)=0$, resp. $ \br(\tilde X)/\br(F)=0$,
 and we have   proved that strong approximation holds off $S$.
 We may thus assume $\br(U)/\br(F)=\Z/2$, resp. $ \br(\tilde X)/\br(F)=\Z/2$, hence
 all $e_{j}$ are even and $d \notin F^{\times 2}$,
 and that (b) fails. Then either
  
 (i)  there exists a finite place $v \in S$ with  $d \notin F_{v}^{\times 2}$
 
 or
 
 (ii) there exists a real place $v \in S$ with $d  \notin F_{v}^{\times 2}$, i.e. $d <0$,
 such that  $q$ is isotropic over $F_{v}$ or $r(t)$ has a root in $F_{v}$.
 
 We let $\xi$ be an element of  exact order 2 in $\br(U)$, resp. $\br({\tilde X})$ which generates
$\br(U)/\br(F)$, resp. $\br({\tilde X})/\br(F)$.  For any $t_{v}\in {\bf A}^1(F_{v})$ with $p(t_{v})\neq 0$,
$\xi$ generates $\br(U_{t_{v}})/\br(F_{v})$, resp. $\br({\tilde X}_{t_{v}})/\br(F_{v})$ (Prop. \ref{bexcebis}).
 If $v$ is a  finite place of $S$ with $d \notin F_{v}^{\times 2}$
then,  by Lemma \ref{infinit}, above any point of $t_{v } \in {\bf A}^1(F_{v})$ with $p(t_{v})\neq 0$,
 $\xi$ takes two distinct values on $U_{t_{v}}(F_{v})= {\tilde X}_{t_{v}}(F_{v})$. 
  It thus takes two distinct
 values on $U(F_{v})$, resp. ${\tilde X}(F_{v})$. 
 The same argument applies if $v \in S $ is a real place and $q$ is isotropic at $v$.
 If $v$ is a real place and $q$ is anisotropic at $v$, then one may write the equation of $X$
 over $F_{v}=\R$ as
 $$x^2+y^2+z^2= r(t)^2.$$
 The real quadric $Q$ defined by $x^2+y^2+z^2=1$ contains the point $(1,0,0)$. Applying the recipe
 in Proposition \ref{bexp}, one finds that the class of the quaternion algebra $(x-r(t),-1)$ in $ \br(F(U))$  lies
 in
   $\br(U)$
 and  generates $\br(U\times_{F}{\R})/\br(\R)$.
 By assumption, $r(t)$ has a real root. One 
 easily
  checks 
  that $(x-r(t))$ takes opposite
 signs on $U(\R)$ when one crosses such a real root of $r(t)$. Thus $\xi_{\R}=(x-r(t),-1)$ takes two distinct
 values on $U(F_{v})$.

 Let now $\{P_{v}\}$ be an ad\`ele of $U$, resp. $\tilde X$. 
   If $\sum_{v}\xi(P_{v})=1/2$, then we   change
 $P_{v}$ at a place $v \in S$ so that the new $\sum_{v}\xi(P_{v})=0$. 
 We then know that that we can approximate this family  off $S$ by
 a point in $U(F)$, resp. $X(F)$.
 \end{proof}
 
 \begin{rem}\label{watson}
 {\rm
   Over the ring of usual integers, a special case of Watson's Theorem 3 in \cite{Wat} reads as follows.

{\it  Assume the ternary quadratic form  $q(x,y,z)$ with integral coefficients is of rank 3
 over $\Q$ and isotropic over $\R$. Let $p(t) \in \Z[t]$ be a nonconstant polynomial.
 Assume
 
 (W) 
 For each big enough prime $l$, the equation $p(t)=0$ has a solution in the
local field $\Q_{l}$. 

  If the equation  
 $q(x,y,z)=p(t)$ has solutions in  
 $\Z_{l}$ for each prime $l$, then
it has a solution in~$\Z$.}

Let $k=\Q$ and $X/k$ and $\tilde X/k$ be as above. 
This result is a consequence of  Theorem \ref{newmaindetail}.
   Indeed, if
$\br({\tilde X})/\br(k)=0$, strong approximation holds for $\tilde X$, hence in particular
 the local-global
principle  holds for integral points of $\tilde X$.   By Proposition \ref{cexp},
   $\br({\tilde X})/\br(k)\neq 0$
 occurs only if all $e_{i}$ are even, $d \notin k^{\times 2}$
and $d \in  k_{i}^{\times 2}$ for all $i$. That is to say, for each $i$,
the quadratic field extension $k({\sqrt d})$ of $k$ lies in $k_{i}$.
There are infinitely many primes $v$ of $k$ which are inert in $k({\sqrt d})$.
For such primes $v$, none of the equations $p_{i}(t)=0$ admits a solution in $k_{v}$.
Condition (W) excludes this possibility.}
 \end{rem}

\section{Two examples}\label{twoexamples}

In this section we give two examples which exhibit a drastic failure of strong approximation:
there are integral points everywhere locally but there is no global integral point.

\medskip

The first example develops      \cite[(6.1), (6.4)]{Xu}.

\begin{prop}
Let $\bf X \subset \A^4_{\Z}$ be the  scheme over $\Bbb Z$ defined by $$-9x^2+2xy+7y^2+2z^2 =
(2t^2-1)^2. $$ 
Let $\bf U$ over $\Z$ be the complement of $x=y=z=0$ in $\bf X$.
Let $X={\bf X} \times_{\Z}\Q$ and $U={\bf U} \times_{\Z}\Q$.
Let ${\tilde X} \to X$ be a desingularization of $X$ inducing an isomorphism over $U$.
Let ${\bf {\tilde X}} \to {\bf X}$, with ${\bf U} \subset {\bf {\tilde X}}$,
be a proper morphism   extending $\tilde X \to X$.

Strong approximation off $\infty$ fails for $U$ and for $\tilde X$.
More precisely:

(i)  $$\prod_{p\leq \infty }{\bf X}(\Bbb Z_p) \neq
\emptyset \ \ \ \text{ and } \ \ \ {\bf X}(\Bbb Z)=\emptyset .  $$

(ii) $$\prod_{p\leq \infty }{\bf U}(\Bbb Z_p) \neq
\emptyset \ \ \ \text{ and} \ \ \ {\bf U}(\Bbb Z)=\emptyset .  $$

(iii) $$\prod_{p\leq \infty }{\bf {\tilde X}}(\Bbb Z_p) \neq
\emptyset \ \ \ \text{ and } \ \ \ {\bf {\tilde X}}(\Bbb Z)=\emptyset .  $$
\end{prop}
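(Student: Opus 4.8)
\emph{Plan.} I would realise this variety as an instance of case~(iv) of Theorem~\ref{newmaindetail}, make the generator of its Brauer group explicit via Proposition~\ref{bexp}, and then read off the Brauer--Manin obstruction to integral points, which turns out to be concentrated at the prime~$2$.

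First I would record the data. Here $q(x,y,z)=-9x^2+2xy+7y^2+2z^2$ has $\det q=-128$, and $p(t)=(2t^2-1)^2=c\,r(t)^2$ with $c=1$, $r(t)=2t^2-1$, so $d=-c\det q=128\equiv 2$ in $\mathbb Q^\times/\mathbb Q^{\times 2}$ is not a square; but $r(t)$ is irreducible over $\mathbb Q$ with residue field $F_1=\mathbb Q(\sqrt 2)$, in which $128=(8\sqrt 2)^2$ is a square. Thus all exponents are even, $d\notin\mathbb Q^{\times 2}$ and $d\in F_1^{\times 2}$, i.e.\ we are in situation~(iv) of Theorem~\ref{newmaindetail}, so $\br(\tilde X)/\br(\mathbb Q)=\br(U)/\br(\mathbb Q)=\mathbb Z/2$; moreover $q(1,1,0)=0$, so $q$ is isotropic over $\mathbb Q$ and $v_0=\infty$ serves in $S=\{\infty\}$. (Theorem~\ref{newmaindetail}(v) then already gives the qualitative failure, condition~(a) holding and~(b) holding because $d=128>0$; I will instead prove the sharper~(i)--(iii) directly.) For the explicit class, the affine quadric $Q\colon q(x,y,z)=1$ has the rational point $(3/8,-5/8,0)$, whose tangent plane on the projective quadric $q-u^2=0$ is $4x+4y+u=0$, so by Proposition~\ref{bexp} the quaternion class $B=(4x+4y+2t^2-1,\,128)=(4x+4y+2t^2-1,\,2)$ lies in $\br(U)$ and generates $\br(U)/\br(\mathbb Q)$.

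Since ${\bf U}\subset{\bf X}$ and ${\bf U}\subset{\bf{\tilde X}}$, for the local halves of~(i)--(iii) it suffices to exhibit a point of ${\bf U}(\mathbb Z_p)$ for every $p\le\infty$. For odd $p$, Lemma~\ref{good} applied with $t=0$ (fibre $q(x,y,z)=1$, with $\det q$ a unit at $p$) yields a $\mathbb Z_p$-point with a unit coordinate. For $p=\infty$, isotropy of $q$ gives a real point of $U$. For $p=2$ one checks directly that $(0,3,\zeta,1)$, with $\zeta\in\mathbb Z_2$ a square root of $-31$ (which exists since $-31\equiv1\pmod 8$), lies in ${\bf U}(\mathbb Z_2)$: indeed $q(0,3,\zeta)=63+2\zeta^2=(2\cdot 1^2-1)^2=1$ and $z=\zeta$ is a unit.

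The core is that ${\bf X}(\mathbb Z)=\emptyset$. Let $(x_0,y_0,z_0,t_0)\in\mathbb Z^4$ solve the equation; then $(x_0,y_0,z_0)\ne(0,0,0)$ (otherwise $(2t_0^2-1)^2=0$), so it defines a point $P\in U(\mathbb Q)$ and $B(P)=(g_0,2)\in\br(\mathbb Q)$ with $g_0=4x_0+4y_0+2t_0^2-1$. I would compute $\inv_v(B(P))$ at every place. At $\infty$ it is $0$ since $2>0$. At an odd prime $p$: if $p\mid x_0,y_0,z_0$ then $p\mid 2t_0^2-1$ (as $p^2\mid(2t_0^2-1)^2$), forcing $2\equiv t_0^{-2}$ to be a square mod $p$, so the Hilbert symbol $(g_0,2)_p=1$; otherwise $(x_0,y_0,z_0)$ is primitive at $p$, so $P\in{\bf U}(\mathbb Z_p)$, and since ${\bf U}_{\mathbb Z_p}$ is smooth over $\mathbb Z_p$ and $B$ extends to $\br({\bf U}_{\mathbb Z_p})$ (its residue along the special fibre is trivial, $128$ and $4x+4y+2t^2-1$ being units there), $B(P)$ lies in $\br(\mathbb Z_p)=0$; either way $\inv_p(B(P))=0$. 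At $p=2$: reducing the equation mod $8$ and using that $2t_0^2-1$ is odd gives $(x_0-y_0)^2-2z_0^2\equiv 7\pmod 8$, which forces $x_0-y_0$ and $z_0$ odd, hence $x_0+y_0$ odd, hence $g_0\equiv\pm3\pmod 8$ and $\inv_2(B(P))=(g_0,2)_2=1/2$. Thus $\sum_v\inv_v(B(P))=1/2\ne 0$, contradicting the reciprocity law. Hence ${\bf X}(\mathbb Z)=\emptyset$; then ${\bf U}(\mathbb Z)\subset{\bf X}(\mathbb Z)=\emptyset$, and any $\mathbb Z$-point of ${\bf{\tilde X}}$ would map under ${\bf{\tilde X}}\to{\bf X}$ into $\emptyset$, so ${\bf{\tilde X}}(\mathbb Z)=\emptyset$ as well. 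Together with the previous paragraph this yields~(i)--(iii), and in particular strong approximation off $\infty$ fails for $U$ and for $\tilde X$. The step I expect to be hardest is this $2$-adic analysis: exhibiting the local point at~$2$, and above all the mod-$8$ congruence that pins $\inv_2(B(P))$ to $1/2$ for \emph{every} integral solution while verifying that no other place contributes.
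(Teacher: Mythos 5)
Your proposal is correct and follows essentially the same route as the paper: an order-two quaternion class generating $\br(U)/\br(\Q)$ whose local invariants vanish at all odd places and at infinity but are forced to equal $1/2$ at $p=2$ by a mod-$8$ congruence derived from the equation, contradicting reciprocity. The only real difference is cosmetic: you take the tangent-plane generator $(4x+4y+2t^2-1,2)$ supplied by Proposition \ref{bexp}, whereas the paper works with the equivalent generator $(y-x,2)=(9x+7y,2)$ coming from the factorization $(x-y)(9x+7y)=2z^2-(2t^2-1)^2$, which makes the odd-place vanishing slightly more immediate; your verifications (the point $(3/8,-5/8,0)$ on $q=1$, the tangent plane $4x+4y+u=0$, the $2$-adic point, and the congruence $g_0\equiv\pm 3\pmod 8$) all check out.
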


\begin{proof}
With  notation as in Theorem   \ref{newmaindetail}, we have
$F=\Q$, $v_0=\infty$, $S=\{v_0\}$. One has $det(q)=-2^7$ and
$d=-c.det(q)= 2^9$. We are in case (iv) of Theorem   \ref{newmaindetail}.
Over $\R$, $q(x,y,z)$ is isotropic.
By Theorem \ref{newmaindetail} (iv)   
we have $$\br({\tilde X})/\br(F)= \br(U)/\br(F) = \Z/2$$
and
by Theorem \ref{newmaindetail} (v)   we know that strong approximation off $S$  fails for $U$ and $\tilde X$.

The equation may be written as
\begin{equation}\label{eqaff} (x-y)(9x+7y)=2z^2-(2t^2-1)^2. \end{equation}

Let $Y/\Q$ be the smooth open set defined by 
 \begin{equation}\label{eqlisse} (x-y)(9x+7y)=2z^2-(2t^2-1)^2 \neq 0. \end{equation}
Thus $Y \subset U \subset X$.
We have  $Y(\Q)=U(\Q)=X(\Q)$ since $2$ is not a square in $\Q$.
We also have $Y(\Q_{p})=U(\Q_{p})=X(\Q_{p})$ for any prime $p$
such that $2$ is not a square in $\Q_{p}$.

On the 3-dimensional smooth variety $U$, 
the algebra 
 \begin{equation}\label{computeB}
 B=(y-x,2)=(-2(9x+7y),2)=(9x+7,2)
\end{equation}
is unramified off the codimension $2$ curve $x=y=0$,
hence by purity  it is unramified on $U$.
One could show by purely algebraic means that it generates  $\br(U)/\br(F) = \Z/2$
but this will follow from the arithmetic computation below.

Note that $U(\Q)=X(\Q)$, since the singular points of $X$ are not defined over $\Q$.

For $p\neq 2$, there is a point of ${\bf U}(\Z_{p})$ with $t=1$.
For $p\neq 3$, we have the point $(0,1/3,1/3,1)$ in ${\bf U}(\Z_{p})$.
Thus $\prod_{p\leq \infty }{\bf U}(\Bbb Z_p) \neq
\emptyset$.

For $p\neq 2$,  and $2$ not a square in $\Q_{p}$,
for any solution of  (\ref{eqlisse}) in $\Z_{p}$, $y-x$ and $9x+7y$ are 
$p$-adic units.
 For any $p \neq 2$, equality (\ref{computeB})  thus implies $B(M_{p})=0$ for any
point in ${\bf X}(\Z_{p}) \cap Y(\Q_{p})$. 
Since $U$ is smooth, $Y(\Q_{p})$ is dense in $U(\Q_{p})$.
Since ${\bf X}(\Z_{p})$ is open in $X(\Q_{p})$,
this implies that ${\bf X}(\Z_{p}) \cap Y(\Q_{p})$ is dense in ${\bf X}(\Z_{p}) \cap U(\Q_{p})$,
and then that $B(M_{p})=0$ for any
point in ${\bf X}^*(\Z_{p}):={\bf X}(\Z_{p}) \cap U(\Q_{p})$.
This last set contains ${\bf U}(\Z_{p})$.

The algebra $B$ trivially vanishes on ${\bf X}^*(\R):=U(\R)$.

 Let us consider a point $M_{2} \in  {\bf X}(\Z_{2}) \subset Y(\Q_{2})$.
From (\ref{eqlisse}), for such a point with coordinates $(x,y,z,t)$, we have
$$(x-y)(9x+7y) = \pm 1 \hskip2mm {\rm mod} \hskip2mm 8.$$
Thus the 2-adic valuation of  $y-x$ and of $9x+7y$ is zero. 
 If $B$ vanishes on $M_{2}$ 
then 
$y-x=1 \hskip2mm mod \hskip2mm 4$ and  $9x+7y=1 \hskip2mm  mod \hskip2mm 4$.
But then $16x=2  \hskip2mm mod \hskip2mm 4$, which is absurd.
Thus $B(M_{2})$ is not zero, that is $B(M_{2})=1/2 \in \Q/\Z$.

We conclude that for any point $\{M_{p}\} \in  \prod_{p} {\bf X}^*(\Z_{p}) \times X^*(\R)$,
$$\sum_{p} B(M_{p})=B(M_{2})= 1/2.$$

This implies ${\bf X}(\Z) =  {\bf X}(\Z) \cap  U(\Q) =  \emptyset$, hence 
 ${\bf U}(\Z)= \emptyset$ and ${\bf {\tilde X}} (\Z) = \emptyset$,
 since both sets map to ${\bf X}(\Z)$.

Since ${\bf {\tilde X}} \to {\bf X}$ is  proper, the
map ${\bf {\tilde X}}(\Z_{p}) \to {\bf X}(\Z_{p})$ contains $ {\bf X}^*(\Z_{p})$ in its image.
We thus have ${\bf {\tilde X}}(\Z_{p}) \neq \emptyset$. 

One actually has
$$ [\prod_{p\leq \infty} {\bf {\tilde X}}(\Bbb Z_p)]^{Br({\tilde X})} =\emptyset.$$ 
Indeed, the algebra $B=(y-x,2)$ on $U$ extends to an unramified class on $\tilde X$.
To see this, one only has to consider the points of codimension 1 on $\tilde X$
above the closed point $2t^2-1=0$ of $\A^1_{\Q}$. For the corresponding valuation $v$ on
the field $F(\tilde X)$, one have $v(2t^2-1)>0$, thus $2$ is a square in the
residue field of $v$, hence the residue of $(y-x,2)$ at $v$ is trivial.
 \end{proof}

\medskip

\bigskip

The next  example is inspired by an example of Cassels
(cf. \cite[8.1.1]{CTX}).

\begin{prop}
Let $\bf X \subset \A^4_{\Z}$ be the  scheme over $\Bbb Z$ defined by
$$x^2-2y^2+64z^2=(2t^2+3)^2.$$ 
Let $\bf U$ over $\Z$ be   the complement of $x=y=z=0$ in $\bf X$.
Let $X={\bf X} \times_{\Z}\Q$ and $U={\bf U} \times_{\Z}\Q$.
Let ${\tilde X} \to X$ be a desingularization of $X$.
Let ${\bf {\tilde X}} \to {\bf X}$ be a proper morphism extending $\tilde X \to X$.

Strong approximation off $\infty$ holds for  $\tilde X$ and fails for $U$.
More precisely:

(i) ${\bf {\tilde X}}(\Bbb Z)$ is dense in
$\prod_{p < \infty }{\bf {\tilde X}}(\Bbb Z_p)$.

(ii)  
  There are solutions
$(x,y,z,t)$ in $\Z$  and $p(t) \neq 0$, thus 
${\bf X}(\Z) \cap U(\Q) \neq \emptyset$.
  
  (iii) We have $\prod_{p\leq \infty }{\bf U}(\Bbb Z_p) \neq
\emptyset$  and $ [\prod_{p\leq \infty} {\bf U}(\Bbb Z_p)]^{Br(U)} =\emptyset, $
hence ${\bf U}(\Bbb Z)=\emptyset$ : there are no solutions
 $(x,y,z,t)$ in $\Z$  with $(x,y,z)$ primitive.
\end{prop}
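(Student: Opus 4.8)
The plan is to recognise this as an instance of Theorem~\ref{newmaindetail}, case (iii), and then to exhibit the Brauer--Manin obstruction completely explicitly. In the notation of \S\ref{2dim} we take $F=\Q$, $q(x,y,z)=x^2-2y^2+64z^2$ (so $\det q=-2^7$), $r(t)=2t^2+3$ and $p(t)=r(t)^2$; thus all $e_i$ are even and $d=-c\det(q)$ equals $2$ modulo squares. Since $r$ is irreducible over $\Q$, the field $F_1=\Q[t]/(r(t))$ equals $\Q(\sqrt{-6})$, which does not contain $\sqrt2$, so $d\notin F_1^{\times 2}$; and $q$, being indefinite, is isotropic over $\R$. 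As $(3,0,0,0)\in U(\Q)$ (one has $q(3,0,0)=9=p(0)$), we have $U(\A_\Q)\neq\emptyset$, so Theorem~\ref{newmaindetail}(iii) applies with $S=\{\infty\}$: it gives $\br(\tilde X)/\br(\Q)=0$, $\br(U)/\br(\Q)=\Z/2$, strong approximation off $\infty$ for $\tilde X$, and strong approximation with Brauer--Manin condition for $U$. From the first of these, statement (i) follows by the standard passage from strong approximation for the smooth variety $\tilde X$ to density of ${\bf{\tilde X}}(\Z)$ in $\prod_{p<\infty}{\bf{\tilde X}}(\Z_p)$ (the latter product is nonempty since $(3,0,0,0)$, lying in $U(\Q)$, defines a point of ${\bf{\tilde X}}(\Z_p)$ for every $p$ by properness of ${\bf{\tilde X}}\to{\bf X}$). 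Statement (ii) is witnessed by $(3,0,0,0)$, which has $(x,y,z)\neq0$ and $p(0)=9\neq0$; note that this point is \emph{not} in ${\bf U}(\Z)$, since membership there forces $(x,y,z)$ primitive, so there is no contradiction with (iii).

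For (iii) I would first check that $\prod_{p\le\infty}{\bf U}(\Z_p)\neq\emptyset$: the point $(3,0,0,0)$ lies in ${\bf U}(\Z_p)$ for every $p\neq3$ and in $U(\R)$, while $(5,0,0,1)$, with $q(5,0,0)=25=p(1)$, lies in ${\bf U}(\Z_3)$. The obstruction is carried by the quaternion class of Proposition~\ref{bexp}: applied with the rational point $(1,0,0)$ of the quadric $Q:q=1$, whose projective tangent plane there is $x=u$, it yields
$$B=(x-r(t),\,d)=\bigl(x-(2t^2+3),\,2\bigr)\in\br(\Q(U)),$$
which lies in $\br(U)$ and generates $\br(U)/\br(\Q)=\Z/2$. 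The aim is to show that $\sum_{p\le\infty}B(M_p)=1/2$ for \emph{every} adelic point $(M_p)\in\prod_{p\le\infty}{\bf U}(\Z_p)$. Since constant classes contribute $0$ to such a sum, this proves that the Brauer--Manin set of $\prod_{p\le\infty}{\bf U}(\Z_p)$ is empty, whence ${\bf U}(\Z)=\emptyset$ by the reciprocity law.

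The local invariants away from $2$ should vanish, just as in the first example. At $\infty$, $d\equiv2>0$ is a square in $\R$, so $B(M_\infty)=0$. At a finite prime $p\neq2$ it is trivial when $2\in\Q_p^{\times2}$, and otherwise one argues from the factorisation $(x-(2t^2+3))(x+(2t^2+3))=2y^2-64z^2=2\,N_{\Q(\sqrt2)/\Q}(y-4\sqrt2\,z)$: the two factors cannot both be divisible by $p$, for otherwise $p$ would divide $x$ and $2t^2+3$, and then the congruence $y^2\equiv32z^2\pmod{p^2}$ together with primitivity of $(x,y,z)$ would force $2$ to be a square mod $p$, hence in $\Q_p$; so one of the factors is a $p$-unit, and since $B$ is represented by $(x-(2t^2+3),2)$ and equally by $(x+(2t^2+3),2)$ — their sum being $(2y^2-64z^2,2)=0$ in $\br(\Q(U))$ — one gets $B(M_p)=0$. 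Thus the adelic sum collapses to $B(M_2)$.

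The remaining step, which I expect to be the only delicate one, is the $2$-adic computation of $B(M_2)$ for $M_2=(x,y,z,t)\in{\bf U}(\Z_2)$. I would reduce the equation modulo increasing powers of $2$. Modulo $8$ it forces $x$ to be a unit and $y$ even; writing $y=2y'$ one gets $x^2-(2t^2+3)^2=8(y'^2-8z^2)$, so of the two even factors $x\mp(2t^2+3)$ — whose sum is $2x$ — exactly one, call it $g_1$, has $2$-adic valuation $1$, the other $g_2$ valuation $\geq2$; moreover $B(M_2)=(g_1,2)_2$, since $g_1g_2$ is $2$ times a nonzero norm from $\Q_2(\sqrt2)$. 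Using $g_1^2=2x^2+2(2t^2+3)^2-g_2^2$ and the residues modulo $64$ of the three terms — those of $x^2$ and $(2t^2+3)^2$ being pinned down by reducing the equation modulo $32$ and separating $4\mid t$ from $4\nmid t$, while $g_2^2\equiv16$ or $0\pmod{64}$ according as $y'$ is odd or even — one finds in every case $g_1^2\equiv36\pmod{64}$, i.e. the unit part of $g_1$ is $\equiv\pm3\pmod8$, so $(g_1,2)_2=1/2$. This completes the proof that the Brauer--Manin set of $\prod_{p\le\infty}{\bf U}(\Z_p)$ is empty and hence that ${\bf U}(\Z)=\emptyset$; combined with the non-emptiness of the local sets established above, this is the asserted drastic failure of strong approximation for $U$, in contrast with its validity for $\tilde X$ recorded in (i).
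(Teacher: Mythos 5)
Your proposal is correct and follows the same overall strategy as the paper: identify the example as case (iii) of Theorem \ref{newmaindetail} to get (i) and the group-theoretic facts, then exhibit an explicit generator of $\br(U)/\br(\Q)$ via Proposition \ref{bexp} and show its adelic invariant is identically $1/2$ on $\prod_{p\le\infty}{\bf U}(\Z_p)$. The one substantive difference is the choice of generator. The paper takes the tangent plane at $(0,0,\tfrac18)$, giving $B=(2t^2+3-8z,2)$; with that representative the $2$-adic invariant is immediate, since $2t^2+3-8z\equiv\pm3\pmod 8$ for all $t,z\in\Z_2$. You take the tangent plane at $(1,0,0)$, giving $(x-(2t^2+3),2)$, and must then run a mod-$64$ analysis; I checked your computation and it is correct (one gets $g_1^2\equiv36\pmod{64}$ in both parities of $y'$, and in fact the case split on $4\mid t$ is not even needed, since $(2t^2+3)^2\equiv9\pmod{16}$ always suffices). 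Both arguments use the same device at odd primes, namely two representatives of $B$ whose first arguments multiply to ($2$ times) a norm from $\Q(\sqrt2)$, so that at least one is a unit at each odd place. Two very minor imprecisions in your write-up, neither fatal: at points with $y=z=0$ (such as $(3,0,0,0)\in{\bf U}(\Z_2)$) the product $g_1g_2$ vanishes, so the phrase ``$2$ times a nonzero norm'' fails there, but the conclusion $B(M_2)=(g_1,2)_2$ still holds because one then simply evaluates the representative with nonvanishing argument; and the congruence you need at odd $p$ is only modulo $p$, not $p^2$. The paper's choice of generator buys a one-line computation at $2$; yours costs more work but changes nothing essential.
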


\begin{proof}
With notation as in Theorem  \ref{newmaindetail}, we have
 $F=\Q$, $v_0=\infty$, $S=\{v_0\}$.
We have $d=2^7$. Over $\R$, $q(x,y,z)$ is isotropic.
We are in case (iii) of Theorem  \ref{newmaindetail}.
We have  $\br({\tilde X})/\br(F)=0$ and   $\br(U)/\br(F)=\Z/2$.

 According to Theorem  \ref{newmaindetail} (iii), strong approximation off $\infty$
 holds for $\tilde X$.

Theorem \ref{newmaindetail}  (v)
 then says that strong approximation off $S$  fails for $U$.
 That is, $U(\Q)$ is not dense in $ U(\A_{\Q}^{\infty})$.

The point $(x,y,z,t)=(3,0,0,0) \in U(\Q) \cap {\bf X}(\Z) $ provides a point in ${\bf U}(\Z_{p})$
 for each   prime $p\neq 3$ and for $p=\infty$. 
 In general, for $p$ odd, we have 
 ${\bf U}(\Bbb Z_p) \neq \emptyset$ by Lemma \ref{good}.

 Let us prove 
  Statement (iii).

Since $1-8z=0$ is the tangent plane on affine quadric $x^2-2y^2+64z^2=1$ over $\Q$  at the point $(0,0, \frac{1}{8})$, Proposition \ref{bexp} shows that
  $B= (2t^2 + 3-8z, 2)$ is the generator of $\Br(U)/\Br(F)$.
We have
\begin{equation}\label{factorizeeq}
 (2t^2+3-8z)(2t^2+3+8z)=x^2-2y^2
 \end{equation}
thus 
\begin{equation}\label{secondvalueB}
B=(2t^2 + 3-8z, 2)=(2t^2 + 3+8z, 2).
\end{equation}
Let $p$ be an odd prime such that $2$ is not a square modulo $p$.
For a point  $(x,y,z) \in {\bf U}(\Z_{p})$, if $p$ divides both $2t^2 + 3-8z$
and $2t^2 + 3+8z$, then on the one hand $p$ divides $z$ 
and on the other hand, by equation (\ref{factorizeeq}), it divides
$x^2-2y^2$, which then implies that $p$ divides $x$ and $y$.
Thus $p$ divides $x,y, z$, which is impossible for a point in ${\bf U}(\Z_{p})$.
We   conclude from (\ref{secondvalueB}) that for any odd prime $p$, $B$ vanishes on ${\bf U}(\Z_{p})$.

For $p=2$, for any   $t$ and $z$ in $\Z_{2}$, we have $2t^2 + 3-8z = \pm 3$ modulo $8$, hence
$$(2t^2 + 3-8z,2)=(\pm 3,2) = 1/2 \in \Br(\Q_{2}).$$

Thus
$$ [\prod_{p\leq \infty} {\bf U}(\Bbb Z_p)]^{\Br(U)} =\emptyset, $$ 
which implies  ${\bf U}(\Bbb Z)=\emptyset$.

 \end{proof}

 \section{Approximation for singular varieties}\label{singular}

 The following lemma is well known.

\begin{lem}
Let $k$ be a local field.    Let $X$ be a geometrically integral variety over $k$. 
Let $f: {\tilde X} \to X$ be a resolution of singularities for $X$, i.e. ${\tilde X}$ is a smooth,
geometrically integral $k$-variety and $f$ is a proper birational $k$-morphism.
The following  
closed 
subsets of $X(k)$ coincide:

(a) The closure of $X_{smooth}(k)$ in $X(k)$ for the topology of $k$.

(b) The set $f({\tilde X}(k)) \subset X(k)$.

In particular, this set, called the set of central points of $X$, does not depend
on the resolution $f: {\tilde X} \to X$. It will be denoted $X(k)_{\rm cent}$. 
\end{lem}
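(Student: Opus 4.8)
The plan is to prove separately the inclusions $(a)\subseteq (b)$ and $(b)\subseteq (a)$ of subsets of $X(k)$. The only non-formal ingredient I will need is the elementary density statement: \emph{if $Y$ is a smooth geometrically integral $k$-variety and $V\subseteq Y$ a nonempty open subscheme, then $V(k)$ is dense in $Y(k)$ for the $v$-adic topology.} I would justify this by the implicit function theorem: each $y\in Y(k)$ has a $v$-adic neighbourhood in $Y(k)$ analytically isomorphic to an open subset of $k^{\dim Y}$ (via an \'etale chart to affine space), and since $Y$ is irreducible the closed subset $Y\setminus V$ is, in each such chart, contained in the zero locus of a nonzero analytic function, hence nowhere dense; therefore its set of $k$-points is nowhere dense in $Y(k)$, and $V(k)=Y(k)\setminus (Y\setminus V)(k)$ is dense. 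Alternatively this is a standard fact about the $v$-adic topology on rational points and could simply be cited. Note that $X_{smooth}$ is nonempty (as $X$ is geometrically integral, hence geometrically reduced) and is a smooth geometrically integral open subscheme of $X$.

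Since $f$ is birational there is a dense open $V\subseteq X$ such that $f$ restricts to an isomorphism between $f^{-1}(V)$ and $V$; put $W=V\cap X_{smooth}$, a nonempty (hence dense) open of $X_{smooth}$, and note that $f^{-1}(W)$ is carried isomorphically onto $W$ by this isomorphism. For $(a)\subseteq (b)$: as $f$ is proper and of finite type, the induced map $\tilde X(k)\to X(k)$ is a topologically proper continuous map of locally compact Hausdorff spaces, hence closed, so $f(\tilde X(k))$ is closed in $X(k)$. On the other hand $W(k)=f(f^{-1}(W)(k))\subseteq f(\tilde X(k))$, and by the density statement $W(k)$ is dense in $X_{smooth}(k)$; passing to closures and using that $f(\tilde X(k))$ is closed gives $\overline{X_{smooth}(k)}\subseteq f(\tilde X(k))$.

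For $(b)\subseteq (a)$ (which does not even use properness): let $\tilde x\in\tilde X(k)$ and $x=f(\tilde x)$. The open subscheme $f^{-1}(W)\subseteq\tilde X$ is nonempty, hence dense since $\tilde X$ is irreducible; as $\tilde X$ is smooth, the density statement gives that $f^{-1}(W)(k)$ is dense in $\tilde X(k)$, so $\tilde x$ lies in the closure of $f^{-1}(W)(k)$. Applying the continuous map $f$ and using $f(f^{-1}(W)(k))=W(k)\subseteq X_{smooth}(k)$ yields $x\in\overline{X_{smooth}(k)}$. Hence $(b)\subseteq (a)$, the two sets agree, and the final assertion of the lemma follows at once because description $(a)$ does not refer to the chosen resolution $f$.

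The only real obstacle is the density statement for smooth $k$-varieties; once that is in hand the rest is purely formal (birationality produces the common dense open $V$; properness gives closedness of the image $f(\tilde X(k))$; continuity of $f$ on $k$-points does the rest). The mild points to watch are merely that $X_{smooth}$ be nonempty and irreducible, which is guaranteed by $X$ being geometrically integral over the field $k$.
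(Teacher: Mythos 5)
Your proof is correct and follows essentially the same route as the paper's (two-line) argument: density of $V(k)$ in $Y(k)$ for a nonempty open $V$ of a smooth variety $Y$, plus topological properness of $f$ on $k$-points to get closedness of $f(\tilde X(k))$. You have merely spelled out in detail what the paper leaves as a sketch.
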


\begin{proof}
One uses the fact that for a nonempty  open set $U$  of $\tilde X$, $U(k)$
is dense in ${\tilde X}(k)$ for the local topology, and that
the inverse image of a compact subset of $X(k)$
under $f$ is a compact set in ${\tilde X}(k)$.
\end{proof}

\begin{definition}
Let $F$ be a number field.   Let $X$ be a geometrically integral variety over $F$.
Assume $X_{smooth}(F) \neq \emptyset$.
Let $S$ be a finite set of places of $F$. 
One says that $X$ satisfies 
central weak approximation at $S$ if  either of the following
conditions is fulfilled:

(a) $X_{smooth}(F)$ is dense in  $\prod_{v \in S} X_{smooth}(F_{v})$.

(b) $X_{smooth}(F)$ is dense in  $\prod_{v \in S} X(F_{v})_{\rm cent}$.

One says that $X$ satisfies weak approximation if this holds for any finite
set $S$ of places of~$F$.
\qed
\end{definition}

While discussing the possible lack of weak approximation for a given variety $X$
the natural Brauer-Manin obstruction is defined by means of the
Brauer group of a {\it smooth, projective} birational model  of $X$.

Let us now discuss strong approximation.

\begin{lem}
Let $F$ be a number field.    Let $X$ be a geometrically integral variety over $F$. 
Let $f: {\tilde X} \to X$ be a resolution of singularities for $X$, i.e. ${\tilde X}$ is a smooth,
geometrically integral $F$-variety and $f$ is a proper birational $F$-morphism.
Let $S$ be a finite set of places of $F$.
The following
closed 
subsets of $X( \Bbb A_F^S)$ coincide:

(a) The intersection of $X(\A_{F}^S)$ with $\prod_{v \notin S} X(F_{v})_{\rm cent}$.

(b)  The image of ${\tilde X}( \Bbb A_F^S) $ under $f: {\tilde X}( \Bbb A_F^S) \to X( \Bbb A_F^S)$.

This set does not depend
on the resolution $f: {\tilde X} \to X$. We shall call it the set of central $S$-ad\`eles of $X$, and we shall denote it
  $X( \Bbb A_F^S)_{\rm cent}$.
\end{lem}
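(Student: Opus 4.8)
The plan is to deduce the adelic equality from the local statement proved in the previous lemma, the only genuinely new ingredient being the valuative criterion of properness applied to an integral model of $f$. First I would spread $f$ out: since $f\colon\tilde X\to X$ is a proper birational morphism of geometrically integral $F$-varieties, there are a finite set $T$ of places of $F$ containing $S$ and all archimedean places, and a proper morphism of finite-type $\frak o_T$-schemes $\varphi\colon{\bf{\tilde X}}\to{\bf X}$ whose restriction over $\Spec(F)$ is $f$. For each $v\notin T$ this induces a map ${\bf{\tilde X}}(\frak o_v)\to{\bf X}(\frak o_v)$, hence a continuous map $\tilde X(\Bbb A_F^S)\to X(\Bbb A_F^S)$ whose image is the set (b); here the choice of models is irrelevant, since adelic points are intrinsic. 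Throughout I would use the previous lemma: $f(\tilde X(F_v))=X(F_v)_{\rm cent}$ is a closed subset of $X(F_v)$, independent of the resolution.

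The inclusion (b) $\subseteq$ (a) is immediate: given $\{\tilde x_v\}_{v\notin S}\in\tilde X(\Bbb A_F^S)$, after enlarging $T$ one has $\tilde x_v\in{\bf{\tilde X}}(\frak o_v)$ for $v\notin T$, so $f(\tilde x_v)\in{\bf X}(\frak o_v)$ there, while $f(\tilde x_v)\in f(\tilde X(F_v))=X(F_v)_{\rm cent}$ for all $v\notin S$; hence the image lies in $X(\Bbb A_F^S)\cap\prod_{v\notin S}X(F_v)_{\rm cent}$. The reverse inclusion (a) $\subseteq$ (b) is the main point. Take $\{x_v\}_{v\notin S}\in X(\Bbb A_F^S)$ with $x_v\in X(F_v)_{\rm cent}$ for every $v\notin S$, and enlarge $T$ so that $x_v\in{\bf X}(\frak o_v)$ for $v\notin T$. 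For each $v\notin S$, since $x_v\in X(F_v)_{\rm cent}=f(\tilde X(F_v))$, I would choose a lift $\tilde x_v\in\tilde X(F_v)$ with $f(\tilde x_v)=x_v$. The crucial observation is that for $v\notin T$ such a lift is automatically integral: $x_v$ is a morphism $\Spec(\frak o_v)\to{\bf X}$, $\tilde x_v$ is a morphism $\Spec(F_v)\to{\bf{\tilde X}}$, and the two agree over the generic point after composing with $\varphi$; since $\varphi$ is proper, the valuative criterion of properness produces a unique morphism $\Spec(\frak o_v)\to{\bf{\tilde X}}$ extending $\tilde x_v$ and lying over $x_v$, i.e.\ $\tilde x_v\in{\bf{\tilde X}}(\frak o_v)$. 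Thus $\{\tilde x_v\}_{v\notin S}$ is a genuine $S$-ad\`ele of $\tilde X$ mapping to $\{x_v\}_{v\notin S}$, proving (a) $\subseteq$ (b).

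It remains to observe that the set (a) is closed and independent of $f$. Closedness is formal: each $X(F_v)_{\rm cent}$ is closed in $X(F_v)$, hence $\prod_{v\notin S}X(F_v)_{\rm cent}$ is closed in the product, and its intersection with the adelically topologized subspace $X(\Bbb A_F^S)$ is closed; by the equality (a) $=$ (b) the image described in (b) is this same closed set. Independence of the chosen resolution is then immediate, since the description (a) involves only $X(F_v)_{\rm cent}$, which the previous lemma showed to be resolution-independent. I expect the one delicate point to be the integrality of the local lifts in the direction (a) $\subseteq$ (b); but once properness is invoked at the places outside $T$ this integrality is forced, so no further density or approximation argument is needed beyond the local lemma.
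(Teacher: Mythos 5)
Your proof is correct and follows essentially the same route as the paper: spread $f$ out to a proper morphism of $\frak o_T$-models and observe that for $v\notin T$ the valuative criterion of properness gives ${\bf {\tilde X}}(\frak o_{v}) = {\bf {X}}(\frak o_{v}) \times_{X(F_{v})} {\tilde X}(F_{v})$, so that local lifts of central integral points are automatically integral. The paper states exactly this fibre-product identity as the whole content of its proof; you have merely written out the details.
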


\begin{proof}
There exists a finite set $T$ of places of $F$ containing $S$ and  a proper $\frak o_{T}$-morphism of
$\frak o_{T}$ schemes 
 ${\bf {\tilde X}} \to {\bf X}$ extending $\tilde X \to X$.
 For $v \notin T$,  one checks that  
 $${\bf {\tilde X}}(\frak o_{v}) = {\bf {X}}(\frak o_{v}) \times_{X(F_{v} )  } {\tilde X}(F_{v}).$$
\end{proof}

\begin{prop} Let $X$ be a geometrically integral variety over  the number field $F$. 
Assume   $X_{smooth}(F) \neq \emptyset$.
Let $f: {\tilde X} \to X$ be a resolution of singularities for $X$.
Let $S$ be a finite set of places of $F$.
The following conditions are equivalent:

(a) The diagonal  image of $X_{smooth}(F)$   in $X(\A_{F}^S)_{\rm cent}$ is dense.

(b) The diagonal  image of ${\tilde X}(F)$   in ${\tilde X}(\A_{F}^S)$ is dense.

  \end{prop}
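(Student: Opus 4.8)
The plan is to show that the resolution map $f$ induces, on adelic points off $S$, a bijection between $\tilde X(\A_F^S)$ and the set of central $S$-ad\`eles $X(\A_F^S)_{\mathrm{cent}}$, and that this bijection is compatible with the diagonal images of $F$-rational points via $\tilde X(F) \to X_{\mathrm{smooth}}(F)$ (the latter identification coming from the fact that $f$ restricts to an isomorphism over a dense open set, so rational points of $\tilde X$ off the exceptional locus match up with smooth rational points of $X$, and the exceptional locus, being of positive codimension in a smooth variety, carries no $F$-points that are not in the closure of the others — in fact one simply notes $\tilde X(F) \supset f^{-1}(X_{\mathrm{smooth}})(F)$ is Zariski dense). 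Once this dictionary is set up, the equivalence of (a) and (b) is a formal consequence of the topological statement that $f: \tilde X(\A_F^S) \to X(\A_F^S)_{\mathrm{cent}}$ is a homeomorphism onto its image.

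First I would invoke the previous lemma to identify $X(\A_F^S)_{\mathrm{cent}}$ with $f(\tilde X(\A_F^S))$, so that the target space in (a) is literally the image of the space in (b). Next I would argue that $f: \tilde X(\A_F^S) \to X(\A_F^S)_{\mathrm{cent}}$ is a homeomorphism: surjectivity is the definition, and injectivity together with continuity of the inverse come from the fact that $f$ is proper and birational, hence an isomorphism over a dense open $V \subset X$ whose complement has positive codimension; the fibres over points not in $V$ are proper and the relevant adelic constraints (integrality at almost all places, via a proper $\frak o_T$-model ${\bf \tilde X} \to {\bf X}$ as in the preceding lemma's proof) pin down the lift uniquely at those places too. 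Concretely, for $v \notin T$ one has ${\bf \tilde X}(\frak o_v) = {\bf X}(\frak o_v) \times_{X(F_v)} \tilde X(F_v)$, so the lift of an $S$-ad\`ele is determined componentwise, and the product topology makes $f$ a homeomorphism.

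With the homeomorphism in hand, I would conclude as follows. Under $f$, the diagonal image of $\tilde X(F)$ in $\tilde X(\A_F^S)$ maps onto the diagonal image of its image in $X(\A_F^S)_{\mathrm{cent}}$; that image contains the diagonal image of $X_{\mathrm{smooth}}(F)$ (since every smooth rational point lifts, $f$ being an isomorphism over $X_{\mathrm{smooth}}$) and is contained in its closure (since $f^{-1}(X_{\mathrm{smooth}})(F)$ is Zariski dense in $\tilde X$, hence its image is dense in $f(\tilde X(F))$ for the adelic topology by smoothness-type density arguments — or more simply, one replaces $\tilde X(F)$ by $f^{-1}(X_{\mathrm{smooth}})(F)$ throughout, which changes neither closure). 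Therefore the closure of the diagonal image of $\tilde X(F)$ equals the closure of the diagonal image of $X_{\mathrm{smooth}}(F)$ under the homeomorphism $f$. Since $f$ carries $\tilde X(\A_F^S)$ homeomorphically onto $X(\A_F^S)_{\mathrm{cent}}$, density of the former closure in $\tilde X(\A_F^S)$ is equivalent to density of the latter in $X(\A_F^S)_{\mathrm{cent}}$, which is exactly the equivalence of (b) and (a).

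The main obstacle I anticipate is the careful verification that $f$ is a homeomorphism of adelic spaces off $S$ — in particular that injectivity and continuity of $f^{-1}$ hold uniformly across all places, which rests on choosing a good proper integral model ${\bf \tilde X} \to {\bf X}$ over some $\frak o_T$ and checking the fibre-product description ${\bf \tilde X}(\frak o_v) = {\bf X}(\frak o_v) \times_{X(F_v)} \tilde X(F_v)$ for $v \notin T$. This is essentially the content of the preceding lemma, so the work is largely bookkeeping, but one must be attentive to the fact that properness of $f$ is what rules out ad\`eles of $X_{\mathrm{cent}}$ failing to lift, and that the constraint "integral at $v \notin T$" is preserved under the lift precisely because $f$ is proper. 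Everything else is a formal manipulation with closures under a homeomorphism.
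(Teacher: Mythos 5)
The pivot of your argument --- that $f$ induces a homeomorphism from $\tilde X(\A_F^S)$ onto $X(\A_F^S)_{\rm cent}$ --- is false, and this is a genuine gap. The map $\tilde X(F_v) \to X(F_v)$ is not injective: over a point of $X(F_v)$ outside the open set $V$ over which $f$ is an isomorphism, the fibre of $f$ is a positive-dimensional proper variety (an exceptional divisor), which typically has many $F_v$-points. The identity ${\bf \tilde X}(\frak o_v) = {\bf X}(\frak o_v) \times_{X(F_v)} \tilde X(F_v)$ is the valuative criterion of properness: it says that a \emph{given} point of $\tilde X(F_v)$ lying over a point of ${\bf X}(\frak o_v)$ extends uniquely to ${\bf \tilde X}(\frak o_v)$; it does not say that a point of ${\bf X}(\frak o_v)$ lifts uniquely to $\tilde X(F_v)$. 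Consequently density downstairs does not formally imply density upstairs, and your argument for (a) $\Rightarrow$ (b) collapses. (The direction (b) $\Rightarrow$ (a) essentially survives, since it only uses continuity and surjectivity of $f$ on $S$-ad\`eles, the identification $X(\A_F^S)_{\rm cent} = f(\tilde X(\A_F^S))$ from the preceding lemma, and the possibility of shrinking any nonempty open set of $\tilde X(\A_F^S)$ so that the rational point produced by (b) lies in $f^{-1}(V)(F)$ and hence maps into $X_{smooth}(F)$.)

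The correct substitute for the homeomorphism is to work with the dense open $f^{-1}(V) \subset \tilde X$, over which $f$ \emph{is} an isomorphism. Since $\tilde X$ is smooth and $f^{-1}(V)$ is a dense open admitting a model with $\frak o_v$-points for almost all $v$, every nonempty basic open set $\prod_{v \in T\setminus S} \tilde W_v \times \prod_{v \notin T}{\bf \tilde X}(\frak o_v)$ can be shrunk, replacing $\tilde W_v$ by $\tilde W_v \cap f^{-1}(V)(F_v)$ and enlarging $T$ so that $T\setminus S \neq \emptyset$, without becoming empty. For (a) $\Rightarrow$ (b), push such a shrunken open set forward: its image is $\bigl(\prod_{v \in T\setminus S} f(\tilde W_v) \times \prod_{v \notin T}{\bf X}(\frak o_v)\bigr) \cap X(\A_F^S)_{\rm cent}$, a nonempty relatively open subset of $X(\A_F^S)_{\rm cent}$ because $f(\tilde W_v)$ is open in $X(F_v)$; condition (a) produces a point $P \in X_{smooth}(F)$ in it, which necessarily lies in $V(F)$ since its components at $v \in T\setminus S$ lie in $f(\tilde W_v) \subset V(F_v)$, hence lifts uniquely to $\tilde P \in f^{-1}(V)(F)$, and $\tilde P$ lands in ${\bf \tilde X}(\frak o_v)$ for $v \notin T$ by the valuative criterion. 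This use of the isomorphism locus, rather than a global homeomorphism of adelic spaces, is what the argument actually requires; the shortcut you propose cannot be repaired without it.
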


\begin{definition}
If these conditions hold, we say that  central
  strong approximation holds for $X$ off  $S$.
  \end{definition}
  
  If central strong approximation off $T$ holds for $X$,   it holds off any
  finite set $S'$ containing $S$.

\begin{definition} Let $X$ be a geometrically integral variety over  the number field $F$. 
Assume   $X_{smooth}(F) \neq \emptyset$. Let $f:  {\tilde X}  \to X$ be a resolution of singularities.
Let $S$ be a finite set of places of $F$.
If  the diagonal  image of ${\tilde X}(F)$ in  $({\tilde X}(\Bbb A_{F}^S))^{\br({\tilde X})} \subset {\tilde X}(\Bbb A_{F}^S)$ is dense, 
we say that  central strong approximation with Brauer-Manin obstruction off $S$ holds for $X $.  If central strong approximation with Brauer-Manin obstruction off $S$ holds for $X$, it holds off  any
  finite set $S'$ containing $S$.
\end{definition}

We leave it to the reader to translate the statement in terms of $X( \Bbb A_F^S)_{\rm cent}$. 
We insist that the relevant group is the group $\br({\tilde X})$, which does not depend on the chosen
resolution of  singularities  ${\tilde X} \to X$.

\medskip

\begin{exa} Let $k$ be a local field 
and  $X$ be a $k$-variety defined by an equation
$$ q(x_1, \cdots, x_n) =p(t),$$
where $q$ is a nondegenerate quadratic form and $p(t) \in k[t]$ a nonzero polynomial.
Then $X(k)\neq X(k)_{cent}$ if and only if there is a zero $\alpha$ of $p(t)$ over $k$ of even order $r$ and the quadratic form in  $n+1$ variables $$q(x_1,\cdots,x_n)- p_0(\alpha)x_{n+1}^2$$ is anisotropic over $k$, where $p(t)=(t-\alpha)^r p_0(t)$.
\end{exa}

\begin{proof} 
By Lemma 3.3, a singular point of $X(k)$ is given by $(0, \cdots, 0, \alpha)$, where $\alpha$ is a zero of $p(t)$ of order $r>1$. Let $p(t)=(t-\alpha)^r p_0(t)$.
We may assume
$$q(x_1, \cdots, x_n) = \sum_{i=1}^n a_i x_n^2 .$$

Let $\pi$ denote a uniformizer of $k$ if $k$ is $p$-adic and some non zero element with $|\pi|<1$
when $k$ is archimedean.

Suppose $r$ is odd. Let $\alpha_l=\alpha+p_0(\alpha)a_1\pi^{2l} $, hence 
$ \lim_{l\rightarrow \infty} \alpha_l = \alpha.$   
For $l\gg 0$, one has $p_0(\alpha_l)=p_0(\alpha)\epsilon_l^2$ with $\epsilon_l \in k^\times$ 
and   $\epsilon_l \rightarrow 1$ as $l\rightarrow \infty$. 
Then $$P_l=(\epsilon_l a_1^{\frac{r-1}{2}} p_0(\alpha)^{\frac{r+1}{2}} \pi^{lr}, 0, \cdots, 0, \alpha_l)$$ are
  smooth points of $X(k)$ for $l\gg 0$ and $P_l\rightarrow (0, \cdots, 0, \alpha)$ when $l\rightarrow \infty$. 
  Therefore $(0, \cdots, 0, \alpha)\in X(k)_{cent}$.

Suppose $r$ is even and the quadratic form $q(x_1,\cdots,x_n)- p_0(\alpha)x_{n+1}^2$ is isotropic. 
There exists  $$(\theta_1, \cdots , \theta_n, \theta_{n+1})\neq (0, \cdots, 0, 0)$$ in 
$k^{n+1}$ such that $q(\theta_1, \cdots, \theta_n)=p_0(\alpha)\theta_{n+1}^2. $
If  $\theta_{n+1}= 0$, then the smooth points of $X(k)$
$$P_n=(\pi^l\theta_1, \cdots, \pi^l\theta_n, \alpha) \rightarrow (0,\cdots,0,\alpha) $$ as $l\rightarrow \infty.$
Therefore $(0, \cdots, 0, \alpha)\in X(k)_{cent}$.

If  $\theta_{n+1}\neq 0$, one can assume that $\theta_{n+1}=1$. Let $t_l=\alpha+\pi^{2l}.$ 
Then $p_0(t_l)=p_0(\alpha)\epsilon_l^2$ with $\epsilon_l \in k^\times$ and $\epsilon_l \rightarrow 1$ 
as $l\rightarrow \infty$. The smooth points of $X(k)$
$$P_n=(\pi^{rl}\epsilon_l\theta_1, \cdots, \pi^{rl}\epsilon_l\theta_n, t_l) \rightarrow (0,\cdots,0,\alpha) $$ as
 $l\rightarrow \infty$. Therefore $(0, \cdots, 0, \alpha)\in X(k)_{cent}$.

Suppose  $r$ is even  and the quadratic form in $n+1$ variables $$q(x_1,\cdots,x_n)- p_0(\alpha)x_{n+1}^2$$ is anisotropic over $k$.
Suppose  the singular point $P_0=(0, \cdots, 0, \alpha)$ is the  limit of a sequence of smooth $k$-points. There thus exists a sequence of smooth $k$-points $P_l, l\in \N,$ satisfying $P_l\rightarrow P_0$ when $l\rightarrow \infty$. Let $P_l=(Q_l,\alpha_l)$ where $\alpha_l$ is the $t$-coordinate of $P_l$. Then $p_0(\alpha_l)=p_0(\alpha)\epsilon_l^2 \neq 0$ with $\epsilon_l \in k^\times$ for $l\gg 0$. Therefore
$$q(Q_l)- p(\alpha_l)=q(Q_l)-p_0(\alpha) [(\alpha_l-\alpha)^{\frac{r}{2}}\epsilon_l]^2 =0$$ for $l\gg 0$, which implies that $q(x_1,\cdots,x_n)- p_0(\alpha)x_{n+1}^2$ is isotropic over $k$. A contradiction is derived,
the point $P_{0}$ does not lie in $X(k)_{cent}$.
\end{proof} 
We conclude that $X(k)\neq X(k)_{cent}$ may happen only  in the following cases.

1) The field  $k$ is $\Bbb R$ and $q(x_1, \cdots, x_n)$ is $\pm$-definite over $\Bbb R$ and there is a zero $\alpha$ of $p(t)$ over $\Bbb R$ of even order $r$ such that $p_0(\alpha)$ has $\mp$ sign.

2) The field  $k$ is $p$-adic field and  $n\leq 3$. One can determine if a quadratic space is anisotropic over $k$ by computing determinants and Hasse invariants, as  in \cite[42:9; 58:6; 63:17]{OM}.

\medskip

{\it Acknowledgements}.  Work on this paper was pursued during visits
of the authors in Beijing (October 2010) and at  Orsay (October 2011). For his visit to Beijing, J.-L. Colliot-Th\'el\`ene was supported by the  Morningside Center of Mathematics.
For his visit to Universit\'e Paris-Sud, Fei Xu   acknowledges support  from David Harari's ``IUF''~grant and from NSFC grant no. 11031004.

\begin{bibdiv}

\begin{biblist}

\bib{BD} {article} {
    author={M. Borovoi},
    author={C. Demarche},
    title={Manin obstruction to strong approximation for homogeneous spaces},
    journal={to appear in Commentarii Mathematici Helvetici},
    volume={},
    date={},
    Pages={},
}

\bib{CT}{book}{
 author={J.-L. Colliot-Th\'el\`ene},
 title={Points rationnels sur les fibrations,  p. 171--221, in Higher Dimensional Varieties and Rational Points},
    series = {Bolyai Society Mathematical Studies},
    publisher={Springer}, 
    volume={12},
    date={2003},
    Pages={171--221},
}

\bib{CTSaSD}{article}{
 author={J.-L. Colliot-Th\'el\`ene},
 author={J.-J. Sansuc},
  author={Sir Peter Swinnerton-Dyer},
 title={Intersections of two quadrics and Ch\^atelet 
surfaces, I },
    journal={Journal f\"ur die reine und angewandte Mathematik},
    volume={373},
    date={1987},
    Pages={37-107},
}

\bib{CTW}{article}{
 author={J.-L. Colliot-Th\'el\`ene},
 author={O. Wittenberg},
 title={Groupe de Brauer et points entiers de deux familles de surfaces cubiques affines},
    journal={American Journal of Mathematics, to appear},
    volume={},
    date={},
    Pages={},
}

\bib{CTX} {article} {
    author={J.-L. Colliot-Th\'el\`ene},
    author={F. Xu},
    title={Brauer-Manin obstruction for integral points of homogeneous spaces and
         representations by integral quadratic forms},
    journal={Compositio Math.},
    volume={145},
    date={2009},
    Pages={309\ndash 363},
}

  \bib{Dem} {article} {
    author={C. Demarche},
    title={Le d\'efaut d'approximation forte dans les groupes lin\'eaires  connexes},
    journal={Proc. London Math. Soc.},
    volume={102 (3)},
    date={2011},
    Pages={563-597},
}

\bib{H} {article} {
    author={D. Harari},
    title={Le d\'efaut d'approximation forte pour les groupes alg\'ebriques commutatifs},
    journal={Algebra and Number Theory},
    volume={2},
    date={2008},
    Pages={595\ndash 611},
}

\bib{KT}{article}{
 author={A. Kresch},
 author={Yu. Tschinkel},
  title={Two examples of Brauer-Manin obstruction to integral points},
    journal={Bull. Lond. Math. Soc.},
    volume={40},
    date={2008},
    Pages={995\ndash1001},
}

\bib {OM}{book}{
    author={O.T. O'Meara},
     title={Introduction to quadratic forms},
     publisher={Springer},
     place={Berlin},
      journal={ },
            series={Grundlehren der Mathematik},
    volume={270},
    date={1971},
    number={ },
     pages={},
}

\bib {PR}{book}{
    author={V.P. Platonov},
    author={A.S. Rapinchuk}
     title={Algebraic groups and number theory},
     publisher={Academic Press},
     place={},
      journal={ },
            series={},
    volume={},
    date={1994},
    number={ },
     pages={},
}

\bib{Wat} {article} {
    author={G.L. Watson},
    title={Diophantine equations reducible to quadratics},
    journal={Proc. London Math. Soc.},
    volume={17},
    date={1967},
    Pages={26\ndash 44},
}

\bib{WX1} {article} {
    author={D. Wei},
    author={F. Xu},
    title={Integral points for groups of multiplicative type},
    journal={preprint},
    volume={},
    date={},
    Pages={},
}

\bib{Xu} {article} {
    author={F. Xu},
    title={On representations of spinor genera II},
    journal={Math. Ann.},
    volume={332},
    date={2005},
    Pages={37\ndash 53},
}

\end{biblist}
\end{bibdiv}

\end{document}